\documentclass{amsart}
\usepackage{amsthm}
\usepackage{amssymb}
\usepackage{amsmath}
\input xy
\xyoption{all} \pagestyle{plain}

\usepackage{tikz}
\usetikzlibrary{snakes}
\newlength{\arrowsize}
\pgfarrowsdeclare{biggertip}{biggertip}{
  \setlength{\arrowsize}{0.4pt} 
  \addtolength{\arrowsize}{.5\pgflinewidth} 
  \pgfarrowsrightextend{0}
  \pgfarrowsleftextend{-5\arrowsize}
}{
  \setlength{\arrowsize}{0.8pt} 
  \addtolength{\arrowsize}{.5\pgflinewidth}
  \pgfpathmoveto{\pgfpoint{-5\arrowsize}{4\arrowsize}}
  \pgfpathlineto{\pgfpointorigin}
  \pgfpathlineto{\pgfpoint{-5\arrowsize}{-4\arrowsize}}
  \pgfusepathqstroke
}

\usepackage{amsthm}
\usepackage{amssymb}
\usepackage{amsmath}
\usepackage[shortlabels]{enumitem}

\usepackage{ifpdf}
\ifpdf
\usepackage[pdftex,linktocpage]{hyperref}
\else
\usepackage[hypertex,linktocpage]{hyperref}
\fi

\input xy
\xyoption{all} \pagestyle{plain}

\newtheorem{theorem}{Theorem}

\newtheorem{definition}[theorem]{Definition}
\newtheorem{lemma}[theorem]{Lemma}

\newtheorem{proposition}[theorem]{Proposition}
\newtheorem{remark}[theorem]{Remark}
\newtheorem{cor}[theorem]{Corollary}

\newtheorem{question}{Question}

\numberwithin{theorem}{section}


%
%
%
%
%

\renewcommand\iff{%
\ifmmode\text{ if and only if }%
\else if and only if \fi}

\renewcommand{\and}{\wedge}

\renewcommand{\phi}{\varphi}

\newcommand{\Ass}{\textnormal{Ass}}

\newcommand{\End}{\textnormal{End}}

\newcommand{\ann}{\textnormal{ann}}

\newcommand{\zg}{\textnormal{Zg}}
\newcommand{\Zg}{\textnormal{Zg}}

\newcommand{\pinj}{\textnormal{pinj}}
\newcommand{\cl}{\textnormal{cl}}

\newcommand{\mcal}[1]{\mathcal{#1}}

\newcommand{\mfrak}[1]{\mathfrak{#1}}
\newcommand{\st}{\ \vert \ }
\newcommand{\vertl}{\left\vert}
\newcommand{\vertr}{\right\vert}

\newcommand{\pp}{\textnormal{pp}}

\newcommand{\R}{\mathbb{R}}

\newcommand{\Q}{\mathbb{Q}}
\newcommand{\N}{\mathbb{N}}
\newcommand{\Z}{\mathbb{Z}}

\newcommand{\J}{\textnormal{J}}
\newcommand{\PE}{\textnormal{PE}}

\newcommand{\Div}{\textnormal{Div}}

%
%
%
\newcommand\hfuzzReset{\hfuzz=3pt}
\hfuzzReset
%
\newcommand\toleranceReset{\tolerance=1400}
\toleranceReset
%
\newcommand\emergencystretchReset{\emergencystretch=2ex}
\emergencystretchReset
\hbadness=10000
%
%
%
%
%
%
%
%
%
%
%

\title{Ziegler Spectra of Serial Rings}
\author{Lorna Gregory}
\address[Lorna Gregory]{University of Camerino, School of Science and Technologies, Division
of Mathematics, Via Madonna delle Carceri 9, 62032 Camerino, Italy}
\email{lorna.gregory@gmail.com}

\author{Gena Puninski}
\address[Gena Puninski]{Belarusian State University, Minsk, Belarus}

\thanks{The first author acknowledges the support of EPSRC through Grant EP/K022490/1. The second author died during the preparation of this article.}

\subjclass[2010]{03C60, 16D10, 54B35}

\date{\today}

\begin{document}

\maketitle

\begin{abstract}
In this paper we prove that the Ziegler spectra of all serial rings are sober. We then use this proof to give a general framework for computing and understanding Ziegler spectra of uniserial rings up to topological indistinguishability. Finally, we illustrate this technique by computing the Ziegler spectra of all rank one uniserial domains up to topological indistinguishability.
\end{abstract}

The (right) Ziegler spectrum, $\Zg_R$, of a ring $R$ is a topological space attached to its module category of $R$. The points of $\Zg_R$ are isomorphism classes of indecomposable pure-injectives and the closed subsets correspond to complete theories of modules closed under arbitrary direct products.

In this article we investigate Ziegler spectra of serial rings.

The model theory of modules of serial rings was developed by Eklof and Herzog \cite{HerzogEklofserialrings} (see also \cite{Pun95})
using the Drozd-Warfield theorem on the structure of finitely presented modules.

In section \ref{soberserial}, we show, \ref{sober}, that the Ziegler spectrum of any serial ring is sober, i.e. each of its irreducible closed subsets is the closure of a point. Soberness of Ziegler spectra was first studied by Herzog in \cite{herzogduality} where he showed that every irreducible closed subset of $\Zg_R$ with a countable neighbourhood basis of open sets is the closure of a point. In particular, he showed that $\Zg_R$ is sober whenever $R$ is countable. His main motivation for showing that $\Zg_R$ is sober was to show that after identifying topologically indistinguishable points, the left and right Ziegler spectra of a ring are homeomorphic. Further, Ziegler spectra may be seen as analogous to (duals of) prime spectra of rings, see \cite[Section 14]{PSL} for details. Hochster, \cite{Hochsterspec}, showed that prime spectra of rings are exactly those topological spaces which are $\textbf{T}_0$, quasi-compact, have a basis of compact open sets which is stable under intersection and which are sober. We know that Ziegler spectra are quasi-compact with a basis of compact open sets. Their basis of compact open sets is rarely stable under intersection (for instance even the Ziegler spectrum of $\Z$ does not have this property) and they are in general not $\textbf{T}_0$. It is currently not known if there exist Ziegler spectra which are not sober.

Recently Gregory \cite{Lornasober} established that the Ziegler spectra for arbitrary commutative Pr\"{u}fer domains are sober. Her proof
is based on the corresponding result for commutative valuation domains, which is proved by brute force. The proof we give in this paper is much shorter and follows from the trichotomy theorem \ref{mainsoberclaim}.
Namely, if $T$ is the theory of an irreducible closed subset $C$ of the Ziegler spectrum of a serial ring $R$ and $e_1,\ldots,e_n$ are a complete set of primitive orthogonal idempotents for $R$, then either the theory $T$ of
$C$ contains a minimal pp-pair, or $C$ is the closure of a module realising the generic $e_i$-type of $T$ for some $1\leq i\leq n$ or the critical $e_i$-type of $T$ for some $1\leq i\leq n$. The generic $e_i$-type is the smallest pp-type of $T$ containing the formula $e_i|x$ and the critical $e_i$-type is the largest pp-type of $T$ containing the formula $e_i|x$.

Despite Pr\"{u}fer rings, that is, commutative rings with distributive ideal lattice, not being serial, as a corollary to our result for serial rings we get that all Ziegler spectra of Pr\"{u}fer rings are sober, see \ref{Prufersober}.

In section \ref{pointsforuniserial}, we turn our proof of soberness on its head and use it to give a general framework for computing and understanding Ziegler spectra of uniserial rings up to topological indistinguishability, equivalently up to elementary equivalence. That is, we describe all generic and critical pp-types of the theory of an indecomposable pure-injective module over a uniserial ring  and describe the indecomposable pp-types whose pure-injective hull is topologically distinguishable from the indecomposable pure-injective modules realising their generic and critical types \ref{pointsuptotopind}.

In section \ref{Sexamples}, we illustrate the techniques introduced in section \ref{pointsforuniserial} by computing the Ziegler spectra, after factoring out by $T_0$, of all rank one uniserial domains. These computations are based on the classification of Brungs and Dubrovin in \cite{B-D}.

A coarse version of the classification in \cite{B-D}, separates rank one uniserial domains into three classes: the nearly simple uniserial domains, that is, those with only one non-zero two-sided proper ideal; the invariant rank one uniserial domains, that is, those for which all right ideals are left ideals and visa-versa; and the exceptional rank one uniserial domains, that is, those with a prime ideal which is not completely prime. 

The case of nearly simple uniserial domains was covered in \cite{Punnearsim}. We include it here for completeness and to illustrate the techniques from section \ref{pointsforuniserial}. We also show, \ref{finiteideallattfinitezg}, that if $R$ is a uniserial ring with only finitely many two-sided ideals then $\Zg_R$ has only finitely many pairwise topologically indistinguishable points. We then go on to exhibit an example of a nearly simple uniserial domain whose Ziegler spectrum is finite.

Unsurprisingly, the invariant case turns out to be very similar to the case of commutative rank one valuation domains, in fact we show that for every invariant rank one uniserial domain $R$ there is a commutative rank one valuation domain $S$ such that $\Zg_R$ and $\Zg_S$ are homeomorphic. The techniques from section \ref{pointsforuniserial} don't play a role here. Instead, we attach a totally ordered abelian group $\Gamma_R$, the value group of $R$, to each invariant uniserial ring $R$. If $R$ is rank one then $\Gamma_R$ is rank one as a totally ordered abelian group and hence is a subgroup of $\R$, in particular $\Gamma_R$ is commutative. We then show that, as described in \cite{Lornasober} for the case of commutative valuation rings, the Ziegler spectrum of an invariant rank one uniserial domain $R$ can be described in terms of $\Gamma_R$. Finally, we give an explicit description of $\Zg_R$ up to topological indistinguishability for each $\Gamma_R$.

By far the hardest case to deal with is that of the exceptional rank one uniserial domains. Examples of such rings were constructed in \cite{B-D} using an embedding of the group ring of the universal covering of $\text{SL}_2(\R)$ into a division ring. From the point of view of classical ring theory, these examples are very hard to approach, due to their extreme noncommutativity. However, the model theory of modules has proved to be very useful in this situation; for instance, the lattice of pp-$1$-formulae carries information about left and right ideal structure of this ring, but also the way they are
interrelated.

The above model theoretic approach to the analysis of serial rings has already found many uses, say, in constructing counterexamples in
the theory of serial modules (see \cite{Punexcuni}, \cite{Punnearsim}), and in tilting theory (see \cite{BazHerPriSarTrl}). Due to
the rich source of highly nontrivial examples, we have no doubt that more applications are on the way.

\section{Preliminaries}

For general background on Model theory of Modules see \cite{Mikebook1} or for a more algebraic perspective see \cite{PSL}. For information about serial rings see \cite{Genaserialrings}.

Through out this paper we will work with right modules by default.

We start by recalling basic results about the model theory of modules.

Let $R$ be a ring. Let $\mcal{L}_R:=(0,+,(r)_{r\in R})$ be the language of (right) $R$-modules. A (right) \textbf{pp-n-formula} is a formula of the form \[\exists \overline{y} \ (\overline{y},\overline{x})A=0\] where $l,n,m$ are natural numbers, $A$ is an $(l+n)\times m$ matrix with entries from $R$, $\overline{y}$ is an $l$-tuple of variables and $\overline{x}$ is an $n$-tuple of variables.

 We write $\varphi(M)$ for the solution set of a pp-formula $\varphi$ in an $R$-module $M$. For any pp-$n$-formula $\phi$ and $R$-module $M$, $\phi(M)$ is a subgroup of $M^n$ under $+$ and if $\phi$ is a pp-$1$-formula then $\phi(M)$ is a left $\End(M)$-submodule of $M$.

 If we weaken our definition of a pp-formula to include all formulae (in one variable) in the language of (right) $R$-modules, $\mcal{L}_R$, which are equivalent over the theory of $R$-modules, $T_R$, to a pp-formula then the $T_R$-equivalence classes of pp-$n$-formulae become a lattice under implication with the join of two formulae $\phi,\psi$ given by
\[(\phi+\psi)(x):=\exists y,z(x=y+z\and\phi(y)\and\psi(z))\] and the meet given by $\phi\and\psi$.

A pp-pair, written $\phi/\psi$, is a pair of pp-formulae $\phi,\psi$ such that $\phi(M)\supseteq \psi(M)$ for all $R$-modules $M$. If $\psi$ does not imply $\phi$ then we identify $\phi/\psi$ with $\phi/\phi\and\psi$. We write $[\psi,\phi]$ for the interval in $\pp_R^n$. If $M$ is an $R$-module and $\phi/\psi$ is a pp-pair then we say that $M$ \textbf{opens} $\phi/\psi$ if $\phi(M)\supsetneq \psi(M)$.

\begin{definition}
Let $R$ be a ring. An \textbf{invariants sentence} is a sentence in $\mcal{L}_R$ which expresses the statement $\vertl\frac{\phi(\bar{x})}{\psi(\bar{x})}\vertr\geq n$ in all modules, for some $\phi,\psi$ pp-formulae of the same arity and $n\in \mathbb{N}$.
\end{definition}

\begin{theorem}[Baur-Monk Theorem]\cite{Mikebook1}
Let $R$ be a ring. Every formula in $\mcal{L}_R$ is equivalent over $T_R$ to a boolean combination of pp-formulae and invariants sentences.
\end{theorem}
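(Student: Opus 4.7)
The plan is to prove the theorem by induction on the complexity of formulae. Atomic formulae are equations (hence already pp-formulae) and the Boolean cases are immediate, so the substance lies in handling the existential quantifier: given a formula $\chi(\bar x, y)$ that is already a Boolean combination of pp-formulae and invariants sentences, I need to show that $\exists y\,\chi(\bar x, y)$ has the same form.

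First I would pull out the invariants sentences, which have no free variables and so commute with $\exists$. After putting the remaining matrix in disjunctive normal form and distributing $\exists$ over $\vee$, this reduces to the case
\[
\exists y\,\Bigl(\phi(\bar x,y)\wedge \bigwedge_{i=1}^{n}\neg\psi_i(\bar x,y)\Bigr)
\]
with $\phi,\psi_1,\ldots,\psi_n$ pp-formulae; replacing each $\psi_i$ by $\psi_i\wedge\phi$ further lets me assume $\psi_i\to\phi$ for every $i$.

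Next I would analyse this model-theoretically. For any module $M$ and tuple $\bar a$ from $M$, the fibre $\phi(\bar a,M):=\{b\in M:M\models\phi(\bar a,b)\}$ is either empty or a coset of the pp-definable subgroup $\phi(\bar 0,M)$; similarly each $\psi_i(\bar a,M)$ is empty or a coset of $\psi_i(\bar 0,M)\subseteq\phi(\bar 0,M)$. Nonemptiness of $\phi(\bar a,M)$ is itself expressed by the pp-formula $\exists y\,\phi(\bar x,y)$, so after splitting on that I may assume $\phi(\bar a,M)\neq\emptyset$; the question then becomes whether a single coset of $\phi(\bar 0,M)$ is covered by finitely many cosets of subgroups $\psi_i(\bar 0,M)$ of $\phi(\bar 0,M)$.

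The essential tool is B.~H.~Neumann's coset covering lemma, which asserts that a group covered by finitely many cosets of subgroups is already covered by those cosets whose subgroups have finite index, and that in this finite-index situation the covering criterion depends only on the indices $[\phi(\bar 0,M):\bigcap_{i\in I}\psi_i(\bar 0,M)]$ and on which of the cosets $\psi_i(\bar a,M)$ are empty, nonempty, or coincide. The indices are recorded by invariants sentences, while nonemptiness and coincidence of cosets translate into pp-statements in $\bar x$ (of the form $\exists y\,\psi_i(\bar x,y)$ and variants such as $\exists y\,(\psi_i(\bar x,y)\wedge\psi_j(\bar x,y))$). The hard part will be the bookkeeping in this final translation: one must split into finitely many cases according to which subgroups $\bigcap_{i\in I}\psi_i(\bar 0,M)$ have each possible finite index in $\phi(\bar 0,M)$, and within each case faithfully encode Neumann's combinatorial criterion as a finite Boolean combination of the allowed ingredients, avoiding the naive pitfalls of infinite disjunctions or references to the subgroup lattice of $M$.
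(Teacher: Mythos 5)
The paper does not prove this theorem; it is stated as a citation to Prest's book \cite{Mikebook1}, which is the standard reference. Your sketch is essentially the proof given there (and originally by Baur and Monk): induct on formula complexity, reduce the existential quantifier to the case $\exists y(\phi\wedge\bigwedge_i\neg\psi_i)$ with all $\psi_i$ pp and implying $\phi$, observe that the fibres are (empty or) cosets of the pp-definable subgroups $\phi(\bar 0,-)$ and $\psi_i(\bar 0,-)$, and invoke B.\ H.\ Neumann's lemma to discard infinite-index subgroups (with the bound that the surviving indices are at most the number of cosets, which is what makes the case split finite). One small refinement to keep in mind when you carry out the bookkeeping: what is needed is not just pairwise coincidence of the cosets $\psi_i(\bar a,M)$ but, for each subset $I$ of indices, nonemptiness of $\bigcap_{i\in I}\psi_i(\bar a,M)$ -- expressed by the pp-formula $\exists y\,\bigwedge_{i\in I}\psi_i(\bar x,y)$ -- together with the invariant $[\phi(\bar 0,M):\bigcap_{i\in I}\psi_i(\bar 0,M)]$, so that inclusion--exclusion in the finite quotient $\phi(\bar 0,M)/\bigcap_i\psi_i(\bar 0,M)$ determines coverage. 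With that adjustment your outline is correct and is the same route as the cited source.
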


\begin{cor}\cite[2.18]{Mikebook1}
Let $R$ be a ring and $M,N$ be $R$-modules. Then $M$ is elementary equivalent to $N$ if and only if for all pp-$1$-pairs $\phi/\psi$, \[\vertl\phi(M)/\psi(M)\vertr=\vertl\phi(N)/\psi(N)\vertr\] whenever either side of the equality is finite.
\end{cor}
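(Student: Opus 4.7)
The plan is to apply the Baur--Monk theorem. Since every sentence of $\mcal{L}_R$ is equivalent modulo $T_R$ to a boolean combination of pp-sentences and invariants sentences, and pp-sentences are either always true or always false in modules, $M\equiv N$ if and only if $M$ and $N$ satisfy the same invariants sentences. An invariants sentence of the form $|\phi/\psi|\geq k$ holds in $M$ precisely when $|\phi(M)/\psi(M)|\geq k$, so $M\equiv N$ is equivalent to the assertion that $|\phi(M)/\psi(M)|=|\phi(N)/\psi(N)|$ for every pp-$n$-pair $\phi/\psi$ (with $n$ arbitrary) whenever either side is finite. The forward direction of the corollary follows by restricting to $n=1$.

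For the backward direction, I would reduce arbitrary pp-$n$-pair indices to pp-$1$-pair indices by induction on $n$. Given a pp-$n$-pair $\phi\geq \psi$, define the pp-$1$-formulas $\phi'(x):=\exists x_1,\ldots,x_{n-1}\,\phi(x_1,\ldots,x_{n-1},x)$ and $\psi'$ analogously, and define the pp-$(n-1)$-formulas $K_\phi(\bar x):=\phi(x_1,\ldots,x_{n-1},0)$ and $K_\psi$ analogously. Projection onto the last coordinate induces a surjection $\phi(M)/\psi(M)\to \phi'(M)/\psi'(M)$ whose kernel is naturally isomorphic to $K_\phi(M)/K_\psi(M)$, yielding
\[ |\phi(M)/\psi(M)|=|\phi'(M)/\psi'(M)|\cdot |K_\phi(M)/K_\psi(M)|. \]
Applying the inductive hypothesis to the pp-$(n-1)$-pair $K_\phi/K_\psi$, one expresses $|\phi(M)/\psi(M)|$ as a product of pp-$1$-pair indices in $M$, via a factorisation that depends only on the formulas $\phi,\psi$ and not on $M$.

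From this it follows that $|\phi(M)/\psi(M)|$ is finite if and only if every factor in the decomposition is finite, and in that case its value is determined by the pp-$1$-pair indices appearing in the factorisation. Hence if the pp-$1$-pair indices coincide on $M$ and $N$ whenever either side is finite, so do the pp-$n$-pair indices, and Baur--Monk then gives $M\equiv N$. The only step that is not purely bookkeeping is the verification of the short exact sequence above; the content is that any element of the kernel of the induced map on quotients can, after subtracting an appropriate element of $\psi(M)$, be represented by a tuple whose last coordinate is zero, which is exactly what is needed to lie in $K_\phi(M)$.
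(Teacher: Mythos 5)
The paper does not give a proof of this corollary; it simply cites Prest's book, so there is no ``paper's own proof'' to compare against. Your argument is correct and is in fact the standard one: you invoke Baur--Monk to reduce elementary equivalence to agreement of invariants sentences, observe that pp-sentences are vacuously true, and then reduce the arity of the pp-pairs by induction. The key technical step is sound: the projection $\pi\colon M^n\to M$ onto the last coordinate restricts to a surjection $\phi(M)\to\phi'(M)$ carrying $\psi(M)$ into $\psi'(M)$, so it induces a surjection $\phi(M)/\psi(M)\twoheadrightarrow\phi'(M)/\psi'(M)$; its kernel is $(K'_\phi(M)+\psi(M))/\psi(M)$ where $K'_\phi(M)=\{\overline{a}\in\phi(M): a_n=0\}$, and by the second isomorphism theorem this is $K'_\phi(M)/(K'_\phi(M)\cap\psi(M))=K'_\phi(M)/K'_\psi(M)\cong K_\phi(M)/K_\psi(M)$ (using $\psi\leq\phi$). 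The resulting index factorisation $\vert\phi(M)/\psi(M)\vert=\vert\phi'(M)/\psi'(M)\vert\cdot\vert K_\phi(M)/K_\psi(M)\vert$ depends only on the formulas, and since group indices are at least $1$, the product is finite exactly when each factor is finite; unwinding the induction then transfers agreement of pp-$1$-pair indices to agreement of pp-$n$-pair indices for all $n$, which together with Baur--Monk gives the conclusion. This is exactly the proof one finds in Prest's text.
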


A \textbf{pure-embedding} between two modules is an embedding which reflects the solution sets of pp-formulae. We say a module $N$ is \textbf{pure-injective} if for every pure-embedding $g:N\rightarrow M$, the image of $N$ in $M$ is a direct summand of $M$. We write $\pinj_R$ for the set of isomorphism classes of indecomposable pure-injective modules.

The \textbf{Ziegler spectrum} of a ring $R$, denoted $\zg_R$, is a topological space whose points are isomorphism classes of indecomposable pure-injective modules and which has a basis of open sets given by:
\[(\phi/\psi)=\{M\in\pinj_R \st \phi(M)\supsetneq\psi(M)\and\phi(M)\}\]
where $\varphi,\psi$ range over pp-$1$-formulae.

The sets $(\phi/\psi)$ are compact, in particular, $\Zg_R$ is compact.

The closed subsets of $\Zg_R$ correspond to theories of modules closed under arbitrary products and direct summands via the following maps

\[C \mapsto T(C)=\text{Th}\{N_i^{(\aleph_0)} \st N_i\in C\}\] and
\[T\mapsto \{N\in\Zg_R \st N^{(\aleph_0)} \text{ is a model of } T\}.\]

Given a closed subset $C$ of $\Zg_R$, we write $\pp_R^nC$ for the lattice of pp-$n$-formulae after factoring out by the equivalence relation given by setting $\phi$ equivalent to $\psi$ if and only if $\phi(N)=\psi(N)$ for all $N\in C$. We write $\phi\geq_{C}\psi$ for the order on this lattice. If $\phi/\psi$ is a pp-pair then we write $[\psi,\phi]_C$ for the interval in $\pp_R^nC$.

We say that a pair of pp-formulae $\phi/\psi$ is a $C$-\textbf{minimal pair} if there is no pp-formula $\sigma$ such that $\phi>_{C}\sigma>_{C}\psi$.

A \textbf{pp-type} is a filter in $\pp_R^n$. If $M$ is an $R$-module and $\overline{a}\in M$ then the set of pp-formulae satisfied by $\overline{a}$ in $M$ is called the \textbf{pp-type of} $\overline{a}$. Conversely, if $\overline{a}$ is an $n$-tuple from a module $M$ with pp-type $p$ then we say that $\overline{a}$ \textbf{realises} $p$. For every pp-type $p$ there exists a pure-injective module $M$ and $\overline{a}\in M$ such that $\overline{a}$ realises $p$ and such that if $\overline{a}\in M'\subseteq M$ and $M'$ is pure-injective then $M'=M$. The module $M$ is determined up to isomorphism by $p$ and we call it the \textbf{pure-injective hull} of $p$ and write $N(p)$. See \cite[3.6]{Zieglermodules}, \cite[Chapter 4]{Mikebook1} and for a different perspective \cite[Section 4.3.5]{PSL}.

A pp-type is called \textbf{irreducible} if it is realised in an indecomposable pure-injective module.

\begin{lemma}[Ziegler's irreducibility criterion]\cite[4.4]{Zieglermodules}
A pp-type $p$ is irreducible if and only if for all $\phi_1,\phi_2\notin p$ there exists $\sigma\in p$ such that $\phi_1\wedge\sigma+\phi_2\wedge\sigma\notin p$.
\end{lemma}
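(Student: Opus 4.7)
The plan is to prove the two directions separately, using the universal property of the pure-injective hull $N(p)$ together with the fact that $\End(M)$ is local whenever $M$ is an indecomposable pure-injective.

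For the forward direction, I take $p$ realised by $a\neq 0$ in an indecomposable pure-injective $N$. The first step is to identify $N$ with $N(p)$: the pure-injective hull of $a$ inside $N$ is a non-zero direct summand of $N$, hence is all of $N$ by indecomposability, and is isomorphic to $N(p)$ via a map sending the canonical generator to $a$. Now, assume towards a contradiction that $\phi_1,\phi_2\notin p$ while $\phi_1\wedge\sigma+\phi_2\wedge\sigma\in p$ for every $\sigma\in p$. Each such $\sigma$ produces witnesses $b_\sigma,c_\sigma\in N$ with $a=b_\sigma+c_\sigma$, $b_\sigma\models\phi_1\wedge\sigma$ and $c_\sigma\models\phi_2\wedge\sigma$; so the parametric pp-system $\{x+y=a,\;\phi_1(x),\;\phi_2(y)\}\cup\{\sigma(x),\;\sigma(y):\sigma\in p\}$ is finitely satisfiable in $N$, and by pure-injectivity (algebraic compactness) it is satisfiable, giving $b,c\in N$ with $a=b+c$, $\pp(b)\supseteq p\cup\{\phi_1\}$ and $\pp(c)\supseteq p\cup\{\phi_2\}$. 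Applying the universal property of $N(p)=N$ to $b$ and to $c$ produces $f,g\in\End(N)$ with $f(a)=b$, $g(a)=c$, so $(f+g)(a)=a$. Since $\End(N)$ is local and $a\neq 0$, $f+g$ must be a unit (else $1-(f+g)$ is, forcing $a=0$), which in a local ring forces one of $f,g$ — say $f$ — to be a unit. Then $a=f^{-1}(b)$ and preservation of pp-formulae under homomorphisms give $\pp(a)\supseteq\pp(b)\ni\phi_1$, contradicting $\phi_1\notin p$.

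For the converse, I assume Ziegler's condition and show $N(p)$ is indecomposable. If $N(p)=N_1\oplus N_2$ non-trivially with the canonical generator $a=a_1+a_2$, $a_i\in N_i$, then minimality of the pure-injective hull forbids either $a_i$ from realising $p$ (otherwise $N_i=N(p)$); hence $\pp(a_i)\supsetneq p$ for $i=1,2$, and I pick $\phi_i\in\pp(a_i)\setminus p$. For every $\sigma\in p\subseteq\pp(a_i)$ both $a_i$ satisfy $\phi_i\wedge\sigma$, so $a=a_1+a_2$ witnesses $\phi_1\wedge\sigma+\phi_2\wedge\sigma\in p$ for all $\sigma\in p$, contradicting the assumption. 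Hence $N(p)$ is indecomposable and $p$ is irreducible.

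The main obstacle is the passage from the family $\{(b_\sigma,c_\sigma)\}_{\sigma\in p}$ to a single pair $(b,c)$ in the forward direction: one has to package the infinite collection of pp-conditions (with the parameter $a$) correctly and invoke pure-injectivity in the form of algebraic compactness. The remaining ingredients — the universal property of $N(p)$ and locality of $\End(N)$ — are standard tools for indecomposable pure-injectives, and combine cleanly with $b$ and $c$ to deliver the contradiction.
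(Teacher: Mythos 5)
The paper states this as a cited result (Ziegler \cite[4.4]{Zieglermodules}) and gives no proof, so there is no in-paper argument to compare against; I'll assess yours on its own terms. The forward direction is correct and well executed: passing from the family $(b_\sigma,c_\sigma)$ to a single pair $(b,c)$ via algebraic compactness, then using the lifting property of $N(p)$ together with locality of $\End(N)$, is exactly the right route, and the local-ring step ($f+g$ a unit forces $f$ or $g$ a unit) is handled correctly.

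The converse has a genuine gap at the assertion ``minimality of the pure-injective hull forbids either $a_i$ from realising $p$ (otherwise $N_i=N(p)$)''. The minimality property as defined in the paper reads: if $a\in M'\subseteq N(p)$ with $M'$ pure-injective, then $M'=N(p)$ --- it is a statement about submodules containing $a$, not about $a_1$. If $\pp(a_1)=p$, then $a_1\in N_1$, but $a\notin N_1$ (since $a_2\neq 0$), so minimality does not apply to give $N_1=N(p)$. What one actually gets is this: the hull $H$ of $a_1$ inside $N_1$ satisfies $(H,a_1)\cong(N(p),a)$, and minimality applied to $a\in H\oplus N_2$ forces $N_1=H$; thus $N_1\cong N(p)$, \emph{not} $N_1=N(p)$, and since a module can be isomorphic to a proper direct summand of itself this is not yet a contradiction. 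To close the gap you need the stronger (standard, but not merely ``minimality'') fact that an endomorphism of $N(p)$ fixing $a$ is a monomorphism: taking $\theta\colon N(p)\to N_1$ the isomorphism with $\theta(a)=a_1$, the composite $e:=\theta^{-1}\circ\pi_1\in\End(N(p))$ satisfies $e(a)=a$, hence is mono, yet $0\neq N_2\subseteq\ker e$ --- contradiction. Your conclusion $\pp(a_i)\supsetneq p$ is therefore correct, and the remainder of the converse is fine, but the step needs this additional lemma to be a proof.
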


We say a pp-pair $\phi/\psi$ is contained in a pp-type $p$, and write $\phi/\psi\in p$, if $\phi\in p$ and $\psi\notin p$.

\begin{lemma}\cite[7.10]{Zieglermodules}\label{distinguishingnonisopinj}
Let $p,q$ be irreducible pp-types containing $\phi/\psi$. If $N(p)$ is not isomorphic to $N(q)$ then there exists a pp-formula $\chi$ such that $\psi\subseteq\chi\subseteq\phi$ and either $\phi/\chi\in p$ and $\chi/\psi\in q$, or $\phi/\chi\in q$ and $\chi/\psi\in p$.
\end{lemma}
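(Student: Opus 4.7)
The plan is to argue by contrapositive: I would assume that no pp-formula $\chi$ with $\psi\leq\chi\leq\phi$ witnesses either of the two separation conditions, and deduce $N(p)\cong N(q)$. Unpacking the conditions, using $\phi\in p\cap q$ and $\psi\notin p\cup q$, the hypothesis becomes the \emph{agreement hypothesis}: for every $\chi\in[\psi,\phi]$, $\chi\in p$ if and only if $\chi\in q$. The strategy is to realise $p$ as the pp-type of some element $b\in N(q)$; the pure-injective hull of $b$ inside $N(q)$ is then isomorphic to $N(p)$, and since $b\neq 0$ (as $\psi\notin p$) and $N(q)$ is indecomposable, this hull must coincide with $N(q)$, giving $N(p)\cong N(q)$.

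Pure-injectivity of $N(q)$ reduces the existence of such a $b$ to a finite satisfiability condition: for every $\sigma\in p$ and every $\tau\notin p$ (which, after replacing $\tau$ with $\tau\wedge\sigma$, I may assume satisfies $\tau\leq\sigma$), $N(q)$ must open the pp-pair $\sigma/\tau$. The extension from a single $\tau$ to finitely many $\tau_j\notin p$ is a routine application of Ziegler's irreducibility criterion for $p$: the criterion produces $\sigma_1\in p$ with $\sum_j(\tau_j\wedge\sigma_1)\notin p$, reducing to the single-$\tau$ case. The key move is to transfer $(\sigma,\tau)$ into the interval $[\psi,\phi]$ via $\sigma':=(\sigma\wedge\phi)+\psi$ and $\tau':=(\tau\wedge\phi)+\psi$, both of which lie in $[\psi,\phi]$. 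Since $\sigma\wedge\phi\in p$, one has $\sigma'\in p$, hence $\sigma'\in q$ by the agreement hypothesis; a preliminary application of Ziegler's criterion to $p$ on the pair $\tau\wedge\phi,\psi\notin p$ lets one enlarge $\sigma$ so as to force $\tau'\notin p$, and hence $\tau'\notin q$ by the agreement hypothesis. Consequently $N(q)$ opens $\sigma'/\tau'$, and a standard adjustment produces an element of $N(q)$ opening $\sigma/\tau$.

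The principal obstacle I expect is the non-distributivity of the pp-formula lattice, which blocks any purely formal transfer between $(\sigma,\tau)$ and $(\sigma',\tau')$: deducing $\tau'\notin p$ from $\tau\notin p$ is precisely the step where Ziegler's irreducibility criterion must be invoked with care, and the subsequent passage from an element opening $\sigma'/\tau'$ back to one opening $\sigma/\tau$ requires a further careful application. Once finite satisfiability is established, pure-injectivity of $N(q)$ supplies the required element $b$, and the direct-summand reasoning of the first paragraph closes the argument.
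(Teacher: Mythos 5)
The paper offers no proof of this lemma; it is imported verbatim from Ziegler \cite[7.10]{Zieglermodules}, so there is no in-house argument to compare against and your proposal must be judged on its own.

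Your high-level plan is the right one: under the agreement hypothesis, an element $b\in N(q)$ realising $p$ would have hull inside $N(q)$ isomorphic to $N(p)$; being a nonzero direct summand of the indecomposable $N(q)$, that hull must be all of $N(q)$, so $N(p)\cong N(q)$. The reductions you describe (Ziegler's criterion to collapse finitely many $\tau_j\notin p$ into one, and pure-injectivity to pass from finite satisfiability to a realisation) are also the standard moves.

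The step you yourself single out as the principal obstacle is, however, a genuine gap as you have left it. You put $\tau':=(\tau\wedge\phi)+\psi$ and need $\tau'\notin p$. Note first that $\tau'$ does not depend on $\sigma$, so ``enlarging $\sigma$'' cannot on its own force $\tau'\notin p$. What one must do is also replace $\tau$ by $\tau\wedge\sigma_1$, where $\sigma_1\in p$ comes from Ziegler's criterion applied to $\tau\wedge\phi$ and $\psi$; but even then the criterion only delivers $\tau\wedge\phi\wedge\sigma_1+\psi\wedge\sigma_1\notin p$, which is \emph{smaller} than the formula $(\tau\wedge\phi\wedge\sigma_1)+\psi$ one needs to exclude from $p$, so the exclusion does not formally follow. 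You correctly diagnose that non-distributivity blocks a purely formal transfer, but you do not name the remedy, and Ziegler's criterion alone is not it: the missing ingredient is \emph{modularity} of the pp-lattice. Concretely, with $\tau\leq\sigma\leq\phi$ and $\sigma_1\in p$, $\sigma_1\leq\sigma$, satisfying $\tau\wedge\sigma_1+\psi\wedge\sigma_1\notin p$: if $(\tau\wedge\sigma_1)+\psi\in p$ then also $((\tau\wedge\sigma_1)+\psi)\wedge\sigma_1\in p$, and the modular law (legitimate because $\tau\wedge\sigma_1\leq\sigma_1$) rewrites this as $\tau\wedge\sigma_1+(\psi\wedge\sigma_1)$, contradicting the choice of $\sigma_1$. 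Hence $(\tau\wedge\sigma_1)+\psi\notin p$, so by agreement it is not in $q$, while $\sigma_1+\psi\in p\cap q$; these two formulae lie in $[\psi,\phi]$, $N(q)$ opens the pair, and the coset-splitting you allude to (write an opener $c=c_1+c_2$ with $c_1\in\sigma_1(N(q))$, $c_2\in\psi(N(q))$; then $c_1$ opens $\sigma/\tau$) finishes the reduction. So the argument does close, but only once the modular law is explicitly invoked; without naming it the crucial inference remains unproved.

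A secondary imprecision: ``pure-injectivity supplies the required element $b$'' understates what is needed. Algebraic compactness alone realises the positive part $\{\sigma:\sigma\in p\}$; to realise $p$ \emph{exactly}, avoiding every $\tau\notin p$, one needs a transfinite refinement of pp-definable cosets in which Ziegler's criterion is applied at each successor step, with algebraic compactness used only at the limits. This is standard (and is part of what Ziegler proves around his Theorem 4.9), but it does rely on the irreducibility of $p$ throughout, not just on pure-injectivity.
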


Two points $x,y$ in a topological space $T$ are \textbf{topologically indistinguishable} if for all open sets $U$ of $T$, $x\in U$ if and only if $y\in U$. We call a point in a topological space a \textbf{$T_0$-point} if it is topologically distinguishable from all other points in the space. If $T$ is a topological space then we write $T/T_0$ for the topological space with underlying set the equivalence classes of topologically indistinguishable points and the quotient topology induced by $T$.

In this paper, we will often study $\Zg_R/T_0$ rather than $\Zg_R$. The following proposition explains why this is a reasonable thing to do. It is likely to be well known but we include a proof since we were not able to find a reference.

\begin{proposition}
If $N,M\in\pinj_R$ are topologically indistinguishable as points in the Ziegler spectrum of $R$ then $N$ and $M$ are elementary equivalent as $R$-modules.
\end{proposition}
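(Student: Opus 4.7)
The plan is to apply the Baur--Monk corollary above: $N\equiv M$ if and only if $|\phi(N)/(\phi\wedge\psi)(N)|=|\phi(M)/(\phi\wedge\psi)(M)|$ for every pp-$1$-pair $\phi/\psi$, whenever one side is finite. Replacing $\psi$ by $\phi\wedge\psi$, I assume $\psi\le\phi$. The fundamental consequence of topological indistinguishability I will use is this: for any pp-$1$-formulas $\chi,\chi'$, the equality $\chi(N)=\chi'(N)$ holds iff $\chi(M)=\chi'(M)$, since each is precisely the statement that $N$ (respectively $M$) lies in neither of the basic opens $(\chi/\chi')$ and $(\chi'/\chi)$. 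It follows that for every pp-pair $\phi/\psi$ the lattice intervals $[\psi,\phi]_N$ and $[\psi,\phi]_M$ coincide as posets, and in particular $\phi/\psi$ is $N$-minimal iff it is $M$-minimal.

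I would then induct on $n:=|\phi(N)/\psi(N)|$ (assumed finite). The base $n=1$ is immediate: $\phi(N)=\psi(N)$ forces $N\notin(\phi/\psi)$, hence $M\notin(\phi/\psi)$ and $\phi(M)=\psi(M)$. In the inductive step $n>1$, if $\phi/\psi$ is not $N$-minimal, the coincidence of lattice intervals yields an intermediate pp-formula $\sigma$ with $\psi\lneq\sigma\lneq\phi$ on both modules; the indices $|\phi/\sigma|$ and $|\sigma/\psi|$ are then strictly less than $n$, and by the inductive hypothesis they coincide with their $M$-counterparts, so $|\phi(M)/\psi(M)|=n$ by multiplicativity.

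The decisive case is when $\phi/\psi$ is minimal on both $N$ and $M$. Then by Ziegler's classical theorem on minimal pairs in indecomposable pure-injectives, $\phi(N)/\psi(N)\cong D_N:=\End(N)/J(\End(N))$ and similarly $\phi(M)/\psi(M)\cong D_M$ as $\End$-modules, so the claim reduces to $|D_N|=|D_M|$. This is the main obstacle. My approach is to exploit that the Ziegler topology generated by pp-pairs of arbitrary arity coincides, via Morita equivalence between $R$ and the matrix rings $M_k(R)$, with the one generated by pp-$1$-pairs, so that topological indistinguishability extends automatically to pp-pairs of every arity; since $D_N$ is a finite field by Wedderburn (any finite division ring being commutative), its characteristic is detected by the pp-pair $(p\phi+\psi)/\psi$ as $p$ ranges over primes, and its cardinality can be recovered from suitable multi-variable pp-pair invariants encoding the action of $R$ on $\phi(N)/\psi(N)$. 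Topological indistinguishability then forces $|D_N|=|D_M|$, completing the induction.
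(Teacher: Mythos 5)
Your overall strategy is a genuine alternative to the paper's: you reduce to comparing the finite Baur--Monk invariants, observe correctly that topological indistinguishability makes the lattices $[\psi,\phi]_N$ and $[\psi,\phi]_M$ coincide as posets, and then induct on $|\phi(N)/\psi(N)|$, reducing by multiplicativity to the case of a minimal pair. All of that is sound. But the final step --- the minimal-pair case --- has a real gap. You want to show $|D_N|=|D_M|$ directly by encoding this cardinality into pp-pair invariants that $N$ and $M$ must open identically, but you do not actually exhibit such invariants, and I do not believe there is a straightforward way to do so. In particular, the phrase ``the action of $R$ on $\phi(N)/\psi(N)$'' is already problematic: for noncommutative $R$, a pp-definable subgroup $\phi(N)\subseteq N$ is an $\End_R(N)$-submodule of $N$ but is not in general closed under the $R$-action, so there is no $R$-module structure on $\phi(N)/\psi(N)$ from which to build the invariants you have in mind. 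Detecting the characteristic via $(p\phi+\psi)/\psi$ is fine, but recovering the degree $[D_N:\mathbb{F}_p]$ from pp-pairs alone is precisely the difficulty, and your sketch gives no mechanism for it.

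The paper sidesteps this entirely. Rather than trying to compute $|D_N|$ from topological data, it uses Ziegler's Lemma~\ref{distinguishingnonisopinj} (Ziegler 7.10): if $N\not\cong M$ and both realise the pair $\phi/\tau$, there is a $\chi$ with $\tau\subseteq\chi\subseteq\phi$ that sits in the pp-type on one side but not the other. Minimality of $\phi/\tau$ then forces $\chi$ to collapse to $\phi$ in one module and to $\tau$ in the other, which immediately separates $N$ and $M$ topologically. In other words, once a finite nontrivial pair is present, topological indistinguishability forces $N\cong M$ --- a stronger conclusion than $|D_N|=|D_M|$ --- and the argument never needs to make the residue division ring definable. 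If you want to repair your approach, the cleanest fix is to replace your final paragraph by an appeal to Lemma~\ref{distinguishingnonisopinj} exactly as the paper does; as written, the step from ``minimal on both sides'' to ``equal cardinalities'' is unjustified.
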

\begin{proof}
First note that, by the Baur-Monk theorem, if for all pp-$1$-pairs $\phi/\psi$ we have that either $|\phi(N)/\psi(N)|=1$ or $|\phi(N)/\psi(N)|$ is infinite, and $|\phi(M)/\psi(M)|=1$ or $|\phi(M)/\psi(M)|$ is infinite then if $N$ and $M$ are topologically indistinguishable then they are elementarily equivalent.

Suppose that there exists a pp-$1$-pair $\phi/\psi$ such that $|\phi(N)/\psi(N)|$ is finite but not equal to $1$. Then there exists $\psi\leq\tau<\phi$ such that $\phi/\tau$ is an $N$-minimal pair. Suppose, for a contradiction, that $M$ and $N$ are topologically indistinguishable but not isomorphic. Let $p$ be the pp-type of an element $a$ of $N$ such $a\in\phi(N)$ and $a\notin\tau(N)$. Let $q$ be the pp-type of an element $b$ of $M$ such that $b\in\phi(M)$ and $b\notin\tau(M)$. By \ref{distinguishingnonisopinj}, there exists $\phi\supseteq\chi\supseteq\tau$ such that either $\phi/\chi\in p$ and $\chi/\tau\in q$, or $\phi/\chi\in q$ and $\chi/\tau\in p$. In the first case $\chi(N)=\tau(N)$, so $|\chi(N)/\tau(N)|= 1$, and $\chi/\tau\in q$, so $|\chi(N)/\tau(N)|>1$. In the second case, $\chi(N)=\phi(N)$ so $|\phi(N)/\chi(N)|=1$, and $\phi/\chi\in q$, so $|\phi(M)/\chi(M)|>1$. Thus $N$ and $M$ are not topologically indistinguishable.

\end{proof}

We now specialise to the case of serial rings.

A module $M$ is said to be \textbf{uniserial} if its lattice of submodules is totally ordered by inclusion and said to be \textbf{serial} if it is a direct sum of uniserial modules. A ring $R$ is called \textbf{uniserial} (\textbf{serial}) if it is uniserial (serial) as both a right and left module over itself.

For any serial ring there exists orthogonal idempotents $e_1,\ldots, e_n$ such that $1=e_1+\ldots+e_n$ and, $e_iR$ and $Re_i$ are uniserial (and hence indecomposable). We call an idempotent \textbf{primitive} if $eR$ is uniserial and a set of primitive orthogonal idempotents $e_1,\ldots,e_n$ such that $e_1+\ldots+e_n=1$ a \textbf{complete} set of primitive orthogonal idempotents.

Let $R$ be a serial ring and $e\in R$ a primitive idempotent. We call a pp-$1$-formula $\phi(x)$ an \textbf{$e$-formula} if $\phi(x)\rightarrow e|x$, where $e|x$ denotes the pp-formula $\exists y \ x=ye$.

\begin{lemma}\cite[Corollary 1.6]{HerzogEklofserialrings}\cite[Lemma 11.1]{Genaserialrings}\label{descpp1flaoverserial}
Let $R$ be a serial ring and $e_1,...,e_n$ a complete set of primitive orthogonal idempotents. Every pp-$1$-formula over $R$ is equivalent to a finite sum of pp-formulae of the form $s|x\wedge xt=0$, where $s\in e_iR$ and $t\in Re_j$. Moreover, if $\phi$ is an $e$-formula then we may assume that $s\in e_iRe$ and $t\in eRe_j$.
\end{lemma}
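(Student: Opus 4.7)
The plan is to reduce, via the standard free-realisation correspondence, to the case of a pointed cyclically presented uniserial module, and then to exploit the uniseriality of each $e_i R$ to bring both the divisibility generator and the annihilator generator into the desired idempotent-typed form.

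Every pp-$1$-formula $\phi$ admits a free realisation $(M,m)$ with $M$ finitely presented. By the Drozd-Warfield theorem, $M$ decomposes as $\bigoplus_k M_k$ with each $M_k = e_{i_k} R / u_k R$ and $u_k \in e_{i_k} R$. Writing $m = \sum_k m_k$ with $m_k \in M_k$ and using $\Hom(M,N) = \prod_k \Hom(M_k,N)$, one obtains $\phi(N) = \sum_k \phi_k(N)$, i.e.\ $\phi \equiv \sum_k \phi_k$, where $\phi_k$ is freely realised by $(M_k,m_k)$. It therefore suffices to put each $\phi_k$ into the desired form.

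The crucial step is a normalisation of $u_k$. Since $e_{i_k} R$ is uniserial as a right $R$-module, the submodules $(u_k e_j) R$ for $j=1,\dots,n$ are linearly ordered by inclusion, so $u_k R = \sum_j (u_k e_j) R = (u_k e_{j_k}) R$ for some $j_k$. Replacing $u_k$ by $u_k e_{j_k}$ does not change $M_k$ and places $u_k$ in $e_{i_k} R e_{j_k}$. Writing $m_k = s_k + u_k R$ with $s_k \in e_{i_k} R$, the free realisation reads
\[
  \phi_k(x) \;\equiv\; \exists y\,(y e_{i_k}=y \,\wedge\, y u_k=0 \,\wedge\, x=y s_k).
\]
Comparing the principal right ideals $s_k R$ and $u_k R$ inside the uniserial module $e_{i_k} R$, either $s_k R \subseteq u_k R$, in which case $\phi_k \equiv (x=0)$, trivially a sum of formulae of the required form, or $u_k = s_k t_k$ for some $t_k \in R$. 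In the second case, since $u_k = u_k e_{j_k}$, replacing $t_k$ by $t_k e_{j_k}$ preserves $s_k t_k = u_k$ and puts $t_k$ in $R e_{j_k}$. The condition $y u_k = 0$ then rewrites as $x t_k = 0$ after substituting $x = y s_k$; the remaining components $x t_k e_l = 0$ for $l \neq j_k$ hold automatically because $x t_k e_l = y u_k e_l = 0$ once $u_k e_l = 0$. Thus $\phi_k \equiv s_k\mid x \,\wedge\, x t_k = 0$ with $s_k \in e_{i_k} R$ and $t_k \in R e_{j_k}$.

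For the moreover part, if $\phi$ is an $e$-formula then $m \in \phi(M) \subseteq Me$, and the direct sum decomposition forces $m_k \in M_k e$ for every $k$. One can then replace $s_k$ by $s_k e \in e_{i_k} R e$ (the difference $s_k - s_k e$ lies in $u_k R$, so $m_k$ is unchanged) and, using $s_k e = s_k$, further replace $t_k$ by $e t_k \in e R e_{j_k}$; under $s_k\mid x$ one checks that $x(et_k)=(xe)t_k = xt_k$, so the equivalence class of $\phi_k$ is unchanged. The main obstacle I expect is verifying the normalisation step cleanly: one needs uniseriality both to collapse $\sum_j (u_k e_j) R$ to a single $(u_k e_{j_k}) R$ and then to check that the off-column annihilator conditions $x t_k e_l = 0$ for $l \neq j_k$ are genuinely absorbed by the divisibility clause, rather than surviving as independent constraints that would force a meet rather than a sum.
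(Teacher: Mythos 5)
Your proof is correct, and it takes essentially the same route as the cited source: the paper quotes this lemma from Eklof--Herzog, whose development (as the introduction notes) rests precisely on the Drozd--Warfield decomposition of finitely presented modules, so passing to a free realisation, splitting the pointed module into cyclic local summands $e_{i_k}R/u_kR$, and normalising $u_k$, $s_k$, $t_k$ by uniseriality of $e_{i_k}R$ is the standard argument.

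One small remark: the worry you flag at the end about the ``off-column'' conditions $xt_ke_l=0$ surviving as independent constraints is unfounded and the detour is unnecessary. Once you have $u_k=s_kt_k$, the two-line semantic check (taking $y'=ye_{i_k}$ in one direction, and computing $at_k=ys_kt_k=yu_k=0$ in the other) already shows $\phi_k\equiv s_k\mid x\wedge xt_k=0$ for \emph{any} chosen $t_k$ with $s_kt_k=u_k$; since $u_k=u_ke_{j_k}$, one may simply take $t_k:=t_ke_{j_k}\in Re_{j_k}$ from the start (and likewise $t_k:=et_ke_{j_k}$ in the ``moreover'' part, using $s_ke=s_k$), with no need to analyse the individual components $xt_ke_l$.
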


An $e$-pair is a pair $\langle I,J\rangle$, where $I\subseteq eR$ is a right ideal and $J\subseteq Re$ is a left ideal of $R$.

\begin{lemma}\cite[Lemma 11.2]{Genaserialrings}\label{efladistr}
Let $R$ be a serial ring and $e$ a primitive idempotent. The lattice of all $e$-formulae over $R$ is distributive.
\end{lemma}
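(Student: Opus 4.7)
The plan is to realize the lattice of $e$-formulae as a sublattice of a distributive lattice, from which distributivity follows. By Lemma \ref{descpp1flaoverserial}, every $e$-formula is $T_R$-equivalent to a finite sum of basic $e$-formulae of the form $s\mid x \wedge xt = 0$ with $s\in e_iRe$ and $t\in eRe_j$. The key observation is that since $R$ is serial and $e$ is primitive, $eR$ is uniserial as a right $R$-module and $Re$ is uniserial as a left $R$-module; hence the right ideals of $R$ contained in $eR$ form a chain under inclusion, as do the left ideals of $R$ contained in $Re$. The poset of $e$-pairs, ordered componentwise, is thus a product of two chains and is in particular distributive.

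First I would associate to each $e$-formula $\phi$ an $e$-pair $\langle I_\phi, J_\phi\rangle$. For a basic $e$-formula $s\mid x \wedge xt=0$ with $s\in e_iRe$ and $t\in eRe_j$, note that $tR\subseteq eR$ (since $t=et$) and $Rs\subseteq Re$ (since $s=se$), and I would take the associated pair to be $\langle tR, Rs\rangle$. For a general sum $\phi = \sum_k(s_k\mid x \wedge xt_k=0)$, set $I_\phi = \sum_k t_kR$ and $J_\phi = \sum_k Rs_k$; since these are sums of elements of chains, each is simply the largest basic block appearing.

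Next I would verify that the assignment $\phi\mapsto\langle I_\phi, J_\phi\rangle$ is a lattice homomorphism from the lattice of $e$-formulae to the lattice of $e$-pairs. Compatibility with the join $+$ is essentially built into the definition, while compatibility with the meet $\wedge$ requires using Lemma \ref{descpp1flaoverserial} to put $\phi\wedge\psi$ into the canonical form and then exploiting the chain structure: intersecting the ideals $tR$ and $t'R$ in $eR$ amounts to picking the smaller one, which matches the simplification of $xt=0 \wedge xt'=0$. Well-definedness (that $T_R$-equivalent $e$-formulae give the same $e$-pair) and injectivity (that the $e$-pair determines $\phi$ up to $T_R$-equivalence) are then checked by testing $\phi$ against $R$-modules witnessing each basic block, such as appropriate cyclic modules with prescribed divisibility and annihilator behaviour. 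Once the map is a lattice embedding, distributivity of the lattice of $e$-formulae follows from distributivity of the product of two chains.

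The main obstacle will be verifying meet-compatibility in full generality and establishing injectivity. The meet $\phi\wedge\psi$ is not, on its face, a sum of basic $e$-formulae, so one must argue that after bringing it back into canonical form the ideal invariants really do become the componentwise intersections; this is where Lemma \ref{efladistr}'s uniserial hypothesis is essential, since incomparable right ideals in $eR$ simply do not occur. For injectivity, exhibiting enough test modules to separate $e$-formulae with the same pair $\langle I_\phi, J_\phi\rangle$ requires using the structure theorem for finitely presented serial modules to build modules realising each prescribed combination of divisibility and annihilator data.
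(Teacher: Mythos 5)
Your plan fails at the embedding step: the assignment $\phi\mapsto\langle I_\phi,J_\phi\rangle$ is not injective, and this is not a defect that more careful test modules can repair. The pair $\langle I_\phi,J_\phi\rangle$ remembers only the extremal annihilator ideal and the extremal divisor ideal occurring among the basic blocks of one chosen decomposition of $\phi$; it forgets how divisibility and annihilation are coupled inside each block, which is precisely the information that distinguishes a typical $e$-formula from a single basic one. Concretely, take $R$ a discrete valuation domain with uniformizer $p$ (so $R$ is commutative uniserial and $e=1$), and put $\phi_1 = (p^2|x) \wedge (xp^2=0)$ and $\phi_2 = (p|x)\wedge(xp=0)$. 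Evaluating on $R/p^4R$ gives $\phi_1(R/p^4R)=p^2R/p^4R$ while $\phi_2(R/p^4R)= p^3R/p^4R$, so $\phi_1\not\leq\phi_2$ and hence $\phi_1+\phi_2$ lies strictly above $\phi_2$ in the lattice of pp-formulae. Yet your recipe returns $I_{\phi_1+\phi_2}=p^2R+pR=pR=I_{\phi_2}$ and $J_{\phi_1+\phi_2}=Rp^2+Rp=Rp=J_{\phi_2}$, so two distinct formulae are sent to the same pair $\langle pR,Rp\rangle$. Indeed $\phi_2 + \bigl((p^n|x)\wedge(xp^n=0)\bigr)$, $n\geq 1$, is a strictly increasing chain of pp-formulae all mapped to $\langle pR,Rp\rangle$, so the fibres of your map are infinite. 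There is also a prior well-definedness issue, since inequivalent decompositions of one formula can have different outermost blocks, but the injectivity failure is decisive: the product of the two ideal chains is simply too small a lattice to receive the $e$-formulae.

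The standard argument, which underlies the cited \cite[Lemma 11.2]{Genaserialrings}, obtains distributivity from a different source and avoids any embedding. The lattice $\pp_R^1$ is modular over every ring. Lemma~\ref{descpp1flaoverserial}, together with the uniseriality of $eR$ and $Re$, shows that the $e$-formulae form exactly the sublattice of $\pp_R^1$ generated by the two chains $\{s|x : s\in Re\}$ and $\{(xt=0)\wedge(e|x) : t\in eR\}$. One then invokes the classical theorem of Birkhoff that the sublattice generated by two chains in a modular lattice is distributive. Distributivity thus follows from modularity together with the two-chain generating set, not from any map onto a product of chains. An embedding of the lattice of $e$-formulae into a product of chains does exist \emph{a posteriori}, by Birkhoff's representation theorem for distributive lattices, but constructing such an embedding directly is not easier than, and in fact presupposes, the distributivity you are trying to prove.
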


For an $e$-pair $\langle I,J\rangle$ we define a collection of pp-formulae and negations of pp-formulae
\[J^*/I:=\{s|x\st s\notin J\}\cup\{xr=0 \st r\in I\}\cup \{\neg (sr|xr) \st s\in J, r\notin I\}.\]

\begin{lemma}\cite[11.8]{Genaserialrings}\cite[Theorem 2.7]{HerzogEklofserialrings}
Let $R$ be a serial ring,  $e\in R$ a primitive idempotent and $\langle I,J\rangle$ an $e$-pair. If the set of formulae $J^*/I$ is consistent then it has a unique extension to an irreducible $e$-type over $R$, that is an irreducible pp-$1$-type containing the formula $e|x$. Moreover, all irreducible $e$-types over $R$ are obtained in this way.
\end{lemma}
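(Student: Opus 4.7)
My plan is to build $p$ directly from $J^*/I$ and check irreducibility using Lemma \ref{descpp1flaoverserial} and the distributivity of the lattice of $e$-formulae (Lemma \ref{efladistr}); the converse direction then extracts the $e$-pair from an arbitrary irreducible $e$-type. First I would let $p$ be the filter in $\pp_R^1$ generated by the positive part of $J^*/I$, namely by $e|x$ together with $s|x$ for $s\notin J$ and $xr=0$ for $r\in I$. The hypothesis that $J^*/I$ is consistent guarantees that $p$ respects the negations $\neg(sr|xr)$ for $s\in J$, $r\notin I$. The heart of the argument is the following characterization: an ``atomic'' $e$-formula $s|x\wedge xr=0$ (with $s\in Re$, $r\in eR$) lies in $p$ if and only if $s\notin J$ and $r\in I$. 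The ``if'' direction is immediate. For the converse, since both $s|x$ and $xr=0$ individually imply $sr|xr$, the negation at any pair $(s,r)$ with $s\in J$, $r\notin I$ excludes $s|x$ and $xr=0$ from $p$ respectively; in particular, taking $s=0\in J$ in the negation recovers $xr\neq 0$ whenever $r\notin I$.

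Irreducibility then follows from Ziegler's criterion. Given $\phi_1,\phi_2\notin p$, use Lemma \ref{descpp1flaoverserial} to express each as a sum of atomic $e$-formulae, and by distributivity (Lemma \ref{efladistr}) pick atomic summands $s_i|x\wedge xr_i=0$ of $\phi_i$ that fail to be in $p$, so for each $i$ either $s_i\in J$ or $r_i\notin I$. Using the uniseriality of $eR$ and $Re$, I would then choose an atomic $\sigma=s|x\wedge xr=0\in p$ with $s\notin J$ strictly ``below'' every offending $s_i\in J$ in the linear order on $Re$ and $r\in I$ strictly ``above'' every offending $r_i\notin I$ in the linear order on $eR$. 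Distributing each $\phi_i\wedge\sigma$, every atomic summand of $\phi_1\wedge\sigma+\phi_2\wedge\sigma$ has its left data in $J$ or its right data outside $I$; by the characterization it lies outside $p$, hence the sum does as well.

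For the converse, given an irreducible $e$-type $p$, set $J=\{s\in Re : s|x\notin p\}$ and $I=\{r\in eR : xr=0 \in p\}$; standard pp-closure arguments together with uniseriality confirm that $J$ is a left ideal of $R$ and $I$ a right ideal. The positive part of $J^*/I$ is contained in $p$ by definition; for the negations, if $s\in J$, $r\notin I$ and $sr|xr$ were in $p$, then combining with irreducibility and the serial structure would force $s|x\in p$, contradicting $s\in J$. Uniqueness follows because the characterization pins down membership of every atomic $e$-formula, and these generate all $e$-formulae by Lemma \ref{descpp1flaoverserial}. The main obstacle I anticipate is the choice of $\sigma$ in the irreducibility step: $\sigma$ must simultaneously dominate the offending atomic summands of both $\phi_1$ and $\phi_2$, which relies essentially on the linear orders furnished by uniseriality together with the distributivity of the $e$-formula lattice.
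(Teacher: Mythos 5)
The paper does not prove this lemma itself; it is imported from the cited sources, so there is no internal argument to compare against. Assessing your argument on its own terms: the overall plan is sound and your characterisation of atomic membership is correct (and does use the consistency of $J^*/I$ in the way you indicate). However, there is a genuine gap in the irreducibility step. After distributing, you obtain a finite sum of atomic $e$-formulae $\psi_l = s_l|x\wedge xr_l=0$, each with $s_l\in J$ or $r_l\notin I$, and you conclude the sum lies outside $p$ ``by the characterization''. The characterization only controls individual atomics; a filter can contain a join of elements none of which it contains, so this step needs its own argument — and your carefully chosen $\sigma$ does not supply one. In fact $\sigma=e|x$ works just as well as any other element of $p$ here, so the ``main obstacle'' you anticipate (choosing $\sigma$) is not where the difficulty lies. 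The real missing step is a case split on the bad data. If every $s_l\in J$, take $s_*$ with $Rs_*=\max_l Rs_l\subseteq J$; then $\sum_l\psi_l\leq s_*|x\notin p$, and symmetrically if every $r_l\notin I$. In the mixed case, let $s_*\in J$ be maximal among the offending left data and $r_*\notin I$ minimal among the offending right data. If a witness $a$ of $J^*/I$ satisfied $\sum_l\psi_l$, write $a=\sum_l a_l$ with $a_l\models\psi_l$: the annihilator-bad $a_l$ satisfy $a_lr_*=0$ (since $r_*\in r_lR$), and all the remaining $a_l$ lie in $Ms_*$; hence $ar_*\in Ms_*r_*$, contradicting $\neg(s_*r_*|xr_*)\in J^*/I$. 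It is precisely this use of the negations, not the choice of $\sigma$, that closes the argument.

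The converse direction is likewise on the right track but compressed past the point where it carries conviction: ``combining with irreducibility and the serial structure would force $s|x\in p$'' should be spelled out. If $sr|xr\in p$ and $p$ is realised by $b$ in an indecomposable pure-injective $N$, choose $n\in N$ with $br=(ns)r$; then $b=(b-ns)+ns$ exhibits $b$ as an element of $(xr=0\wedge e|x)(N)+(s|x)(N)$. Since the $e$-formula subgroups of $N$ form a chain (by Lemma \ref{euniserial}), this forces $xr=0\in p$ or $s|x\in p$, contradicting $r\notin I$ and $s\in J$ respectively. With these two repairs your argument goes through.
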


We will write $N(I,J)$ for the unique indecomposable pure-injective module realising $J^*/I$. We say that an $e$-pair $\langle I,J\rangle$ is consistent if $J^*/I$ is consistent. Note that after fixing a complete set of primitive orthogonal idempotents $e_1,\ldots, e_n$, the above lemma implies that all indecomposable pure-injectives are of the form $N(I,J)$ for some consistent $e_i$-pair.

\begin{lemma}\label{Genasconsistencycondition}\cite[11.9]{Genaserialrings}\cite[4.5]{supdecoverserial}
Let $e$ be a primitive idempotent of a serial ring $R$. For a (right) $e$-pair $\langle I,J\rangle$ the following are equivalent:
\begin{enumerate}
\item $\langle I,J\rangle$ is consistent
\item for all $r\in I$, $r^*\notin I$, $s\in J$ and $s^*\notin J$, $s^*r^*\neq sr^*+s^*r$
\item for all $r\in I$, $r^*\notin I$, $s\in J$ and $s^*\notin J$, $s^*r^*\notin RsrR$.
\end{enumerate}
\end{lemma}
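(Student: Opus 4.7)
The plan is to establish (1) $\Leftrightarrow$ (2) and (2) $\Leftrightarrow$ (3), relying on the description of pp-$1$-formulae in Lemma \ref{descpp1flaoverserial}, the distributivity of the lattice of $e$-formulae in Lemma \ref{efladistr}, and Ziegler's irreducibility criterion.

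For (1) $\Rightarrow$ (2) I would proceed by contrapositive. Suppose there exist $r\in I$, $r^*\notin I$, $s\in J$ and $s^*\notin J$ with $s^*r^* = sr^* + s^*r$. If $J^*/I$ is realised by an element $a$ of some module $M$, then $s^*\notin J$ forces $s^*\mid a$, so $a = bs^*$ for some $b$, and $r\in I$ forces $ar = 0$. Therefore
\[ ar^* \;=\; bs^*r^* \;=\; b(sr^* + s^*r) \;=\; (bs)r^* + ar \;=\; (bs)r^*, \]
so $sr^*\mid ar^*$; but $\neg(sr^*\mid xr^*)\in J^*/I$ since $s\in J$ and $r^*\notin I$, a contradiction.

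For the converse (2) $\Rightarrow$ (1), I would use Lemma \ref{descpp1flaoverserial} to write any $e$-formula as a finite sum of formulae of the form $s_i \mid x \wedge xr_i = 0$. Combining the distributivity of the lattice of $e$-formulae (Lemma \ref{efladistr}) with Ziegler's irreducibility criterion, consistency of $J^*/I$ and irreducibility of a maximal extension reduce to showing that no single summand $s_i \mid x \wedge xr_i = 0$ with $s_i \notin J$ and $r_i \in I$ implies $sr^*\mid xr^*$ for any $s\in J$, $r^*\notin I$. Tracing when such an implication could arise, via routine manipulation of $e$-formulae, produces precisely an algebraic identity of the shape $s^*r^* = sr^* + s^*r$, forbidden by (2).

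For (2) $\Leftrightarrow$ (3) I would exploit the uniseriality of $eR$ and $Re$: given $r\in I$, $r^*\notin I$ and $s\in J$, $s^*\notin J$, we have $r \in r^*R$ and $s \in Rs^*$, so $r = r^*u$ and $s = vs^*$ for some $u,v \in R$; consequently $sr^* = vs^*r^*$, $s^*r = s^*r^*u$ and $sr = vs^*r^*u$. The equation of (2) becomes $s^*r^* = vs^*r^* + s^*r^*u$, while $s^*r^* \in RsrR$ asserts $s^*r^* = \sum_i a_i v s^*r^* u b_i$. Passing between the two uses uniseriality of the local ring $eRe$ to reduce such a sum to a single term, and the freedom to vary $r\in I$ and $s\in J$ to match this term against the required additive form. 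The principal obstacle I expect is precisely this equivalence: (2) is an explicit additive identity whereas (3) is a two-sided ideal containment, and bridging them requires the delicate ring-theoretic reduction of multi-term two-sided combinations to a usable normal form, which is where the specifically serial structure of $R$ does most of the real work.
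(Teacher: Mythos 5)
The paper does not prove Lemma~\ref{Genasconsistencycondition}; it is imported verbatim from \cite[11.9]{Genaserialrings} and \cite[4.5]{supdecoverserial}, so there is no in-paper argument to compare yours against. Judged on its own terms, your attempt is correct and complete only in the direction $(1)\Rightarrow(2)$, and it has genuine gaps elsewhere.

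Your contrapositive argument for $(1)\Rightarrow(2)$ is exactly right: from $a=bs^*$, $ar=0$ and $s^*r^*=sr^*+s^*r$ you get $ar^*=(bs)r^*$, directly violating the condition $\neg(sr^*\mid xr^*)\in J^*/I$. That part would stand as written.

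The direction $(2)\Rightarrow(1)$ is where almost all of the content of this lemma lives, and your sketch hides it entirely inside the phrase ``tracing when such an implication could arise, via routine manipulation of $e$-formulae, produces precisely an algebraic identity of the shape $s^*r^*=sr^*+s^*r$.'' That is the theorem, not a routine reduction. To make this honest you would have to: use compactness to pass to finite subsets of $J^*/I$; exploit uniseriality of $eR$ and $Re$ to replace finitely many conditions $s_i\mid x$ ($s_i\notin J$) by the single strongest one $s^*\mid x$, and finitely many $xr_j=0$ ($r_j\in I$) by the single strongest $xr=0$; produce an explicit module and element realising $s^*\mid x\wedge xr=0$; and then show that the only way a negative condition $\neg(s r^*\mid xr^*)$ can fail for this element is via an identity of the claimed shape. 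None of these steps are supplied.

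For $(2)\Leftrightarrow(3)$ your reduction $r=r^*u$, $s=vs^*$ with $u,v\in\J(R)$ (because $u$ or $v$ being a unit would contradict $r^*\notin I$, $s^*\notin J$) is the right start, and the direction $(3)\Rightarrow(2)$ can in fact be finished cleanly from it: writing $z=s^*r^*$, the equation $z=vz+zu$ gives $vz=z(1-u)$ and $zu=(1-v)z$, hence $vzu=(1-v)z(1-u)$ and therefore $z=(1-v)^{-1}(vzu)(1-u)^{-1}\in RsrR$, since $1-u,1-v$ are units. But your appeal to ``uniseriality of the local ring $eRe$'' to go from a multi-term element of $RsrR$ back to a single-term identity is misplaced: $s^*\in Re$ and $r^*\in eR$, so $s^*r^*$ lands in $ReR$, not in $eRe$, and you have no control over which $e_j,e_k$ its components sit between. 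The direction $(2)\Rightarrow(3)$ (equivalently, extracting from $s^*r^*\in RsrR$ a violation of (2), possibly after changing the witnesses $r,r^*,s,s^*$) requires a more careful normal-form argument in the serial ring that you have not given and that does not reduce to a single local uniserial ring. You flag this yourself as ``the principal obstacle,'' which is an accurate self-diagnosis; as it stands the proposal does not close that gap.
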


\begin{lemma}\cite[2.10]{HerzogEklofserialrings}\cite[11.11]{Genaserialrings}\label{equonepairs}
Let $R$ be a serial ring. Let $\langle I_1,J_1\rangle$ be a consistent $e$-pair and $\langle I_2,J_2\rangle$ be a consistent $f$-pair for primitive idempotents $e,f\in R$. The indecomposable pure-injectives $N(I_1,J_1)$ and $N(I_2,J_2)$ are isomorphic if and only if there exist $u\in eRf$ and $v\in fRe$ such that either
\begin{enumerate}
\item $u\neq 0$, $I_1=uI_2$ and $J_1u=J_2$ or
\item $v\neq 0$, $vI_1=I_2$ and $J_1=J_2v$.
\end{enumerate}
\end{lemma}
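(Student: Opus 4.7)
The plan is to use the concrete description of $e$-formulae from \ref{descpp1flaoverserial} together with the consistency criterion \ref{Genasconsistencycondition} to translate the isomorphism question into an algebraic question about the pairs $\langle I,J\rangle$.

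For the ``if'' direction, suppose we are given nonzero $u \in eRf$ with $I_1 = uI_2$ and $J_1 u = J_2$. Fix $a_1 \in N(I_1, J_1)$ realising $J_1^*/I_1$ and set $b := a_1 u$. Then $bf = a_1 uf = a_1 u = b$, so $b$ satisfies $f|x$. I would then verify each defining clause of $J_2^*/I_2$ for $b$: the divisibility statements $s|x$ with $s \in Rf \setminus J_2$ by transporting $s'|a_1$, $s' \in Re \setminus J_1$, through the equation $s = s'u$ supplied by $J_1 u = J_2$ and the shape of $e$-formulae in \ref{descpp1flaoverserial}; the annihilator statements $xr = 0$ with $r \in I_2$ by $br = a_1(ur) \in a_1 \cdot uI_2 = a_1 \cdot I_1 = 0$; and the negations $\neg(sr|xr)$ with $s \in J_2$, $r \notin I_2$ by reducing to the consistency of $\langle I_1, J_1\rangle$ via \ref{Genasconsistencycondition}. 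Since $J_2^*/I_2$ has a unique extension to an irreducible $f$-type, the pp-type of $b$ equals this extension, so the pure-injective hull of $b$ is $N(I_2, J_2)$. Because $N(I_1, J_1)$ is indecomposable and $b \neq 0$ (else $u \in I_1 = uI_2$ gives $u = uy$ with $y \in I_2 \subseteq fJ(R)$, but $1-y$ is then a unit, forcing $u = 0$), this hull coincides with $N(I_1, J_1)$. The $v$-case is symmetric.

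For the ``only if'' direction, let $\Phi : N(I_1, J_1) \cong N(I_2, J_2)$ and let $a_i$ realise $J_i^*/I_i$ for $i=1,2$. Then $\Phi(a_1)$ and $a_2$ both lie in $N(I_2,J_2)$, the former satisfying $e|x$ and the latter $f|x$. The crux is to show that, inside $N(I_2, J_2)$, one of these elements is obtained from the other by right multiplication by an element of $eRf$ or $fRe$. Using \ref{descpp1flaoverserial} the pp-closure of a single generator is controlled by right-multiples of that generator, so up to pp-equivalence one can read off either $a_2 = \Phi(a_1) \cdot u$ with $u \in eRf$ or $\Phi(a_1) = a_2 \cdot v$ with $v \in fRe$, invoking the distributivity \ref{efladistr} and Ziegler's irreducibility criterion to rule out more complicated pp-definable relations. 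Once such a $u$ (respectively $v$) is exhibited, translating the equality of pp-types of the two elements into their annihilator and divisibility data yields $I_1 = uI_2$ and $J_1 u = J_2$ (respectively $vI_1 = I_2$ and $J_1 = J_2 v$).

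The main obstacle is the comparability step in the ``only if'' direction: showing that one of the two distinguished elements of the indecomposable hull must be a scalar right-multiple of the other. This is a genuine consequence of the totally-ordered (serial) structure of $R$ and fails for general rings; it is precisely the point at which one uses that $e$-formulae form a distributive lattice \ref{efladistr} and that the generators of $e$-formulae are divisibility and annihilator conditions with coefficients in $eRe_j$ and $e_iRe$ respectively. All remaining verifications, while calculational, are algebraic book-keeping organised by \ref{descpp1flaoverserial} and \ref{Genasconsistencycondition}.
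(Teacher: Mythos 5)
The paper does not prove Lemma \ref{equonepairs}; it is imported from \cite[2.10]{HerzogEklofserialrings} and \cite[11.11]{Genaserialrings}, so there is no in-paper argument to compare yours against. Assessed on its own terms, your outline of the ``if'' direction is the right idea (transport a realiser of $J_1^*/I_1$ across $u$ and check the three clauses of $J_2^*/I_2$, then use uniqueness of the irreducible extension and indecomposability of the hull), although the verification of the negated clauses $\neg(sr\mid xr)$ deserves more care: writing $s=s'u$ with $s'\in J_1$ reduces the check to $\neg(s'\cdot(ur)\mid a_1\cdot(ur))$, which is an instance of $J_1^*/I_1$ only if $ur\notin I_1$, and that needs a short argument.

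The serious gap is in the ``only if'' direction, at precisely the step you flag as the ``comparability step.'' You fix arbitrary realisers $a_1, a_2$, form $\Phi(a_1)$ and $a_2$ in $N(I_2,J_2)$, and assert that one must be a right $R$-scalar multiple of the other. That statement is false as quantified. Take $R=\Z_{(p)}$, $e=f=1$, $\langle I_1,J_1\rangle=\langle I_2,J_2\rangle=\langle 0,p\Z_{(p)}\rangle$, $N=\widehat{\Z_p}$, $\Phi=\mathrm{id}$, $a_1=1$, and let $a_2$ be a unit of $\widehat{\Z_p}$ not lying in $\Z_{(p)}$; both realise the generic type, but neither is a right $\Z_{(p)}$-multiple of the other. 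The lemma of course still holds here (take $u=1$), which illustrates the point: the conclusion concerns the pairs $\langle I_1,J_1\rangle$, $\langle I_2,J_2\rangle$, not the particular elements chosen, so one cannot fix arbitrary realisers and hope to link them by a scalar. A correct argument must either show that \emph{some} choice of realisers can be made scalar-related (a statement you neither prove nor reduce to your cited lemmas), or avoid specific elements altogether and argue directly on pp-types, e.g.\ by linking the two irreducible types realised in the same indecomposable hull via a two-variable pp-formula, Goursat's lemma, and the description of $e$-formulae in \ref{descpp1flaoverserial}. The appeal to \ref{efladistr} and Ziegler's irreducibility criterion in your text gestures at the right toolbox but does not substitute for this argument.
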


\section{Serial rings}\label{soberserial}

In this section we will show that for any serial ring $R$, $\Zg_R$ is sober. That is, we will show that for every irreducible closed subset $C$, there exists $N\in C$ such that $\cl N=C$. Equivalently, we will show that for all irreducible closed subsets $C$, there exists an $N\in C$ such that for all open sets $U$ such that $U\cap C\neq \emptyset$, $N\in U$. We call such a point a generic point of $C$. Since Ziegler spectra are not always $T_0$, generic points are not necessarily unique.

\begin{lemma}\label{distimpliesirredclosedsetischain}
Let $R$ be a ring, $C\subseteq \Zg_R$ be an irreducible closed subset and $\phi/\psi$ a pp-$1$-pair. If the interval $[\psi,\phi]$ is distributive then $[\psi,\phi]_C$ is a totally ordered.
\end{lemma}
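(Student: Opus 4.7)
My plan is to prove the contrapositive: assume there exist $\sigma_1,\sigma_2\in[\psi,\phi]$ with $\sigma_1\not\leq_C\sigma_2$ and $\sigma_2\not\leq_C\sigma_1$, and derive a contradiction. First I would pass to the subinterval $[\sigma_1\wedge\sigma_2,\sigma_1+\sigma_2]$, which inherits distributivity as a subinterval of a distributive lattice, and assume without loss of generality that $\psi=\sigma_1\wedge\sigma_2$ and $\phi=\sigma_1+\sigma_2$. Irreducibility of $C$ then guarantees that the two nonempty open subsets $(\sigma_i/\psi)\cap C$ intersect in $C$, producing some $N\in C$ opening both pp-pairs. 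Picking $a\in\sigma_1(N)\setminus\sigma_2(N)$ and $b\in\sigma_2(N)\setminus\sigma_1(N)$, the element $c:=a+b$ lies in $\phi(N)$ but in neither $\sigma_1(N)$ nor $\sigma_2(N)$, so $p:=\pp_N(c)$ is an irreducible pp-type (since $N$ is indecomposable pure-injective) that contains $\phi$ but misses each of $\sigma_1,\sigma_2,\psi$.

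Next I would invoke Ziegler's irreducibility criterion on $\sigma_1,\sigma_2\notin p$ to produce $\sigma\in p$ with $\alpha:=\sigma_1\wedge\sigma+\sigma_2\wedge\sigma\notin p$, and replace $\sigma$ by $\sigma\wedge\phi$ to assume $\sigma\leq\phi$ (this does not disturb $\alpha$, since $\sigma_i\leq\phi$). The main obstacle is that this $\sigma$ need not lie inside the distributive interval $[\psi,\phi]$, so the distributive identity cannot be applied to $\sigma$ directly; the key idea is to project $\sigma$ into the interval by setting $\sigma':=\sigma+\psi\in[\psi,\phi]\cap p$ and then combining distributivity with the modularity of the ambient pp-lattice.

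Explicitly, distributivity of $[\psi,\phi]$ gives $\sigma_1\wedge\sigma'+\sigma_2\wedge\sigma'=(\sigma_1+\sigma_2)\wedge\sigma'=\sigma'$, and modularity (using $\psi\leq\sigma_i$) gives $\sigma_i\wedge\sigma'=(\sigma_i\wedge\sigma)+\psi$; together these force the identity $\sigma+\psi=\alpha+\psi$. A second use of the modular law, this time with $\alpha\leq\sigma$, produces $\sigma=\sigma\wedge(\alpha+\psi)=\alpha+(\sigma\wedge\psi)$. Finally $\sigma\wedge\psi\leq\psi\leq\sigma_i$ combined with $\sigma\wedge\psi\leq\sigma$ yields $\sigma\wedge\psi\leq\sigma_i\wedge\sigma\leq\alpha$, collapsing the identity to $\sigma=\alpha$ and contradicting $\sigma\in p$ with $\alpha\notin p$.
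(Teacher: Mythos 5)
Your proof is correct, but it takes a genuinely different route from the paper's. The paper argues via module theory: it cites Tuganbaev's theorem that a distributive module over a local ring is uniserial, applies it with the local ring $\End(N)$ (for $N$ indecomposable pure-injective) to conclude that the lattice $[\psi,\phi]_N$ is a chain for \emph{every} indecomposable pure-injective $N$, and only at the last step uses irreducibility of $C$ to pass to $[\psi,\phi]_C$. You instead work directly inside the pp-lattice: you use irreducibility of $C$ at the outset to produce a single $N$ opening both $\sigma_1/\sigma_1\wedge\sigma_2$ and $\sigma_2/\sigma_1\wedge\sigma_2$, build an irreducible pp-type omitting both $\sigma_i$, and then combine Ziegler's irreducibility criterion with distributivity of the interval and modularity of the ambient pp-lattice to reach the contradiction $\sigma=\alpha$. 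The projection step $\sigma\mapsto\sigma+\psi$, which pulls the Ziegler witness into the distributive interval, is the crux and is handled correctly; the two modular-law applications and the final inequality $\sigma\wedge\psi\le\sigma_1\wedge\sigma\le\alpha$ all check out. Your argument is more self-contained (it needs only Ziegler's criterion and standard modularity of $\pp_R^1$, not an external structure theorem about modules over local rings), while the paper's proof delivers the stronger intermediate fact that the interval is a chain in every individual indecomposable pure-injective, not merely over $C$; that extra information is sometimes useful but isn't needed for the stated lemma.
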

\begin{proof}
By \cite[Theorem 1.21]{Tuganbaevsemidistringsmods} every distributive module over a local ring is uniserial. The endomorphism rings of indecomposable pure-injective modules are local. Thus, for any indecomposable pure-injective module $N$, $\phi(N)/\psi(N)$ is uniserial as a module over $\End(N)$. So for every indecomposable pure-injective module $N$, $[\psi,\phi]_N$ is totally ordered.  Therefore, if $\sigma,\tau\in [\psi,\phi]$ then
\[[\Zg_R\backslash\left(\sigma/\sigma\wedge\tau\right)]\cup[\Zg_R\backslash\left(\tau/\sigma\wedge\tau\right)]=\Zg_R.\]

Hence for every irreducible closed subset of $\Zg_R$, either $C\subseteq\Zg_R\backslash\left(\sigma/\sigma\wedge\tau\right)$ or $C\subseteq\Zg_R\backslash\left(\tau/\sigma\wedge\tau\right)$. Thus $[\psi,\phi]_C$ is totally ordered.

\end{proof}

\begin{lemma}\cite[Corollary 2.5]{HerzogEklofserialrings}\label{euniserial}
Let $R$ be a serial ring and $e_1,...,e_n$ a complete set of primitive orthogonal idempotents. Every pure-injective indecomposable module $N$ considered as an $\End N$-module has a direct sum decomposition $N=\oplus_{i=1}^n Ne_i$ into uniserial $\End N$-submodules $Ne_i$.
\end{lemma}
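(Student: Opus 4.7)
The plan is to establish the decomposition $N = \bigoplus_i Ne_i$ directly from the idempotent decomposition $1 = e_1 + \cdots + e_n$, and then to verify the uniseriality of each $Ne_i$ by invoking the same Tuganbaev-based argument used in the proof of Lemma \ref{distimpliesirredclosedsetischain}, applied to the distributive lattice of $e_i$-formulae.

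First I would observe that since the $e_i$ are orthogonal and sum to $1$, every $x \in N$ admits the decomposition $x = \sum_i x e_i$, and right-multiplication by $e_j$ annihilates $Ne_i$ for $i \neq j$. This yields the direct sum decomposition $N = Ne_1 \oplus \cdots \oplus Ne_n$ of abelian groups. Each $Ne_i$ is stable under $\End N$ because the left $\End N$-action and the right $R$-action commute, so this is a decomposition of $\End N$-modules.

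Second, I fix $i$ and consider the pp-pair $\phi/\psi$ with $\phi(x) := (e_i | x)$ and $\psi(x) := (x = 0)$, so that $\phi(N) = Ne_i$ and $\psi(N) = 0$. The interval $[\psi, \phi]$ in the lattice of pp-$1$-formulae coincides with the lattice of $e_i$-formulae, since a pp-formula lies in this interval exactly when it implies $e_i | x$. By Lemma \ref{efladistr}, this interval is distributive. Repeating the proof of Lemma \ref{distimpliesirredclosedsetischain} verbatim — Tuganbaev's theorem that a distributive module over a local ring is uniserial, combined with the fact that $\End N$ is local for an indecomposable pure-injective $N$ — then yields that $Ne_i = \phi(N)/\psi(N)$ is uniserial as an $\End N$-module.

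I do not foresee any substantial obstacle here: the nontrivial lattice-theoretic input is already packaged in Lemma \ref{efladistr}, and the passage from distributivity of a pp-formula interval to uniseriality of the corresponding pp-quotient over $\End N$ has already been executed inside the proof of Lemma \ref{distimpliesirredclosedsetischain}. The only care required is to verify that for the specific pair $(e_i | x)/(x = 0)$ the interval is literally the lattice of $e_i$-formulae and the quotient module is literally $Ne_i$.
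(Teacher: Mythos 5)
The paper itself supplies no proof of this lemma; it simply cites Eklof--Herzog, so your self-contained argument necessarily takes a different route. Your route is a sensible one: the direct-sum decomposition $N=\bigoplus_i Ne_i$ from orthogonal idempotents summing to $1$, together with $\End N$-stability of each $Ne_i$, is correct, and your identification of $Ne_i$ with $\phi(N)$ for $\phi=(e_i\mid x)$ and of the lattice of $e_i$-formulae with the interval $[x=0,e_i\mid x]$ is also correct. The nice observation here is that Lemma \ref{euniserial} is in fact a consequence of the paper's own Lemmas \ref{efladistr} and \ref{distimpliesirredclosedsetischain}, so the external citation is not logically necessary given what is proved later in the same section.

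One caveat you should be aware of: you correctly note that you must invoke the \emph{proof}, not merely the \emph{statement}, of Lemma \ref{distimpliesirredclosedsetischain}. The statement only asserts that $[\psi,\phi]_N$ (the pp-definable subgroups in the interval) is totally ordered, whereas you need the stronger intermediate claim appearing inside the proof, that $\phi(N)/\psi(N)$ is uniserial as an $\End N$-module, since the $\End N$-submodules of $Ne_i$ need not all be pp-definable. Reaching into the proof for the stronger claim is the right move. However, that intermediate claim is itself asserted in the paper with an elided step: before Tuganbaev's theorem (distributive module over a local ring is uniserial) can be applied, one must know that $\phi(N)/\psi(N)$ is a \emph{distributive} $\End N$-module, and the paper does not explain how this follows from distributivity of the pp-formula lattice $[\psi,\phi]$. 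Your proposal inherits this same elision. It can be filled (for instance, by first using Ziegler's irreducibility criterion together with distributivity to show that the pp-definable subgroups contained in $Ne_i$ are already linearly ordered, and then observing that every cyclic $\End N$-submodule of the pure-injective $N$ is an intersection of pp-definable subgroups, so linear order propagates to all $\End N$-submodules), but as written the Tuganbaev step is doing more work than is justified on the page, both in the paper and in your proposal.
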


It follows from the above lemma that for a serial ring $R$ with $e_1,...,e_n$ a complete set of primitive orthogonal idempotents, we have
\[\left(e_1|x/x=0\right)\cup...\cup\left(e_n|x/x=0\right)=\Zg_R.\]

\begin{cor}\label{irredclosedsetchainserial}
Let $R$ be a serial ring and $e$ a primitive idempotent. Let $C\subseteq \Zg_R$ be an irreducible closed subset. Then the interval $[x=0,e|x]_C$ is totally ordered. In particular, if $N$ is indecomposable pure-injective then the subsets defined by $e$-formulae are totally ordered by inclusion.
\end{cor}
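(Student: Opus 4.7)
The plan is to combine Lemma \ref{efladistr} and Lemma \ref{distimpliesirredclosedsetischain} essentially without extra work. First I would observe that the interval $[x=0,\, e|x]$ inside the full lattice $\pp_R^1$ of pp-$1$-formulae coincides with the lattice of $e$-formulae: a pp-$1$-formula $\phi$ lies in this interval iff it implies $e|x$, which is exactly the defining condition of an $e$-formula, and $x=0$ trivially qualifies. By \ref{efladistr} this sublattice is distributive. Applying \ref{distimpliesirredclosedsetischain} to the interval $[x=0,\, e|x]$ and the irreducible closed set $C$ then immediately yields that $[x=0,\, e|x]_C$ is totally ordered, which is the first statement. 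I do not expect any obstacle here; both pieces are on the shelf.

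For the ``in particular'' claim, I would argue directly from \ref{euniserial} rather than redoing the first part. That lemma produces a decomposition $N=\bigoplus_{i=1}^n Ne_i$ in which each $Ne_i$ is uniserial as an $\End(N)$-module. If $\phi$ is any $e$-formula, then $\phi(N)\subseteq (e|x)(N)=Ne$, and $\phi(N)$ is an $\End(N)$-submodule of $N$. Hence the solution sets of $e$-formulae in $N$ are $\End(N)$-submodules of the uniserial $\End(N)$-module $Ne$, and so they are linearly ordered by inclusion.

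An alternative, slightly slicker route for the ``in particular'' part is to take $C=\cl\{N\}$, which is automatically irreducible as the closure of a point, apply the first part to obtain that $[x=0,\,e|x]_C$ is totally ordered, and then note that $\phi\geq_{C}\psi$ forces $\phi(N)\supseteq\psi(N)$ since $N\in C$. Either argument closes the proof without friction; the only genuine content is the distributivity of the $e$-formula lattice, which is already cited.
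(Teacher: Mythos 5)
Your proof of the main claim is exactly the paper's: identify $[x=0,e|x]$ with the lattice of $e$-formulae, cite \ref{efladistr} for distributivity, and apply \ref{distimpliesirredclosedsetischain}. The paper leaves the ``in particular'' clause implicit; your second route ($C=\cl\{N\}$) is the intended reading, and your first route via \ref{euniserial} is a valid, slightly more direct alternative that bypasses the first part entirely.
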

\begin{proof}
By \ref{efladistr}, the interval $[x=0,e|x]$ is distributive. So, by \ref{distimpliesirredclosedsetischain}, $[x=0,e|x]_C$ is totally ordered.
\end{proof}

Let $R$ be a serial ring, $C$ an irreducible closed subset of $\Zg_R$. For each primitive idempotent $e\in R$ such that $\left(e|x/x=0\right)\cap C\neq \emptyset$, we define two important irreducible pp-types $\wp_{e-\text{gen}}$ and $\wp_{e-\text{crit}}$.

Any (irreducible) filter on the interval $[x=0,e|x]_C$ may be extended, using \cite[4.33]{Mikebook1}, to an irreducible pp-type and although there may be many different extensions, they all have the same pure-injective hull by \cite[7.10]{Zieglermodules}. Thus we define the \textbf{generic} pp-type $\wp_{e-\text{gen}}$ to be the filter on $[x=0,e|x]_C$ containing $e|x$ but not containing any formula in this interval strictly below $e|x$ (or more precisely the pre-image in $[x=0,e|x]$ of this filter). We denote the pure-injective hull of this type by $M_{e-\text{gen}}$.

Similarly, we define the \textbf{critical} pp-type $\wp_{e-\text{crit}}$ as the filter on $[x=0,e|x]_C$ containing all formulae in the interval strictly above $x=0$. We denote the pure-injective hull of this type by $M_{e-\text{crit}}$.

Note that if $C$ is an irreducible closed subset then $M_{e-\text{gen}}, M_{e-\text{crit}}\in C$. This is because if $M_{e-\text{gen}}\notin C$ then, by \cite[4.9]{Zieglermodules} there exists $\phi/\psi$ such that $\phi\in \wp_{e-\text{gen}}$ and $\psi\notin \wp_{e-\text{gen}}$ and $\left(\phi/\psi\right)\cap C=\emptyset$. Note that $M_{e-\text{gen}}\in\left(\phi\wedge e|x/\psi\wedge e|x\right)\subseteq \left(\phi/\psi\right)$ and by definition $\left(\phi\wedge e|x/\psi\wedge e|x\right)\cap C\neq \emptyset$. Exactly the same argument works for $M_{e-\text{crit}}$. When we are not working in a fixed irreducible closed subset $C$ we will write $\wp^{C}_{e-\text{gen}}$ and $\wp^{C}_{e-\text{crit}}$ to indicate that these are the generic and critical types of $C$.

We need the following auxiliary result.

\begin{lemma}\label{div}
Let $R$ be a serial ring and  $e_1,e_2\in R$ be primitive idempotents.
\begin{enumerate}
\item Suppose that $\psi<_{C} r\mid x$ for $r\in e_1Re_2$ and some $e_2$-formula $\psi$. Then $M_{e_1-\text{gen}}$ opens the pair
$r\mid x/\psi$.
\item Suppose that $e_1\mid x\wedge xr=0<_C \phi$ for $r\in e_1Re_2$ and some $e_1$-formula $\phi$. Then $M_{e_2-\text{crit}}$ opens
the pair $\phi/e_1\mid x\wedge xr=0$.
\end{enumerate}
\end{lemma}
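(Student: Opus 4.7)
The plan is to prove each part by exhibiting an element of the appropriate indecomposable pure-injective that opens the pp-pair in question. In both cases, the key move is to translate the hypothesised strict inequality of pp-formulas over $C$ into a corresponding strict inequality inside the interval $[x=0, e_i|x]_C$, at which point the defining property of the generic or critical type does the work.

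For part (1), I would let $a \in M := M_{e_1\text{-gen}}$ realise $\wp_{e_1\text{-gen}}$ and consider the element $ar$. Trivially $ar \in (r|x)(M)$, with witness $a$ itself. To show $ar \notin \psi(M)$, suppose for a contradiction that $ar \in \psi(M)$. Then $a$ satisfies the pp-formula $\psi(xr)$ obtained by substituting $xr$ for $x$ in $\psi$, so $\psi(xr) \wedge e_1|x \in \wp_{e_1\text{-gen}}$. Since this is an $e_1$-formula in the interval $[x=0, e_1|x]_C$ and $\wp_{e_1\text{-gen}}$ contains no $e_1$-formula strictly below $e_1|x$, we obtain $\psi(xr) \wedge e_1|x \equiv_C e_1|x$. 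Evaluating in an arbitrary $N \in C$ then gives $Ne_1 \cdot r \subseteq \psi(N)$; but $e_1 r = r$, so $Ne_1 \cdot r = Nr = (r|x)(N)$, whence $r|x \leq_C \psi$, contradicting $\psi <_C r|x$.

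For part (2), I would introduce the auxiliary pp-formula $\sigma(y) := \exists x\,(\phi(x) \wedge y = xr)$. Because $r = re_2$, any $y$ with $\sigma(y)$ satisfies $y = ye_2$, so $\sigma$ is an $e_2$-formula. I claim $\sigma >_C x = 0$: otherwise $\phi(N) \cdot r = 0$ for every $N \in C$, and since $\phi$ is an $e_1$-formula this forces $\phi \leq_C e_1|x \wedge xr = 0$, contradicting the hypothesis. Therefore $\sigma$ lies in the filter defining $\wp_{e_2\text{-crit}}$, so any realiser $a$ of $\wp_{e_2\text{-crit}}$ in $M_{e_2\text{-crit}}$ (nonzero, since the filter defining $\wp_{e_2\text{-crit}}$ does not contain $x = 0$) satisfies $\sigma$. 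Pulling back through the existential witness gives $b \in M_{e_2\text{-crit}}$ with $\phi(b)$ and $br = a \neq 0$; such a $b$ witnesses that $M_{e_2\text{-crit}}$ opens $\phi/(e_1|x \wedge xr = 0)$.

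The main technical obstacle in each part is the translation step: converting $\psi <_C r|x$ into $\psi(xr) \wedge e_1|x <_C e_1|x$ for part (1), and $e_1|x \wedge xr = 0 <_C \phi$ into $\sigma >_C x = 0$ for part (2). Both translations rest on the absorption identities $e_1 r = r = re_2$ combined with the fact that $\phi$ and $\psi$ are $e_i$-formulas; once the translation is made, the definitions of the generic and critical types supply the opening elements directly.
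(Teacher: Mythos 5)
Your proof is correct and, at bottom, is the same argument as the paper's: your translated formulas $\psi(xr)\wedge e_1|x$ in part (1) and $\sigma(y) := \exists x\,(\phi(x)\wedge y=xr)$ in part (2) are exactly the formulas arising from the Goursat correspondence with $\rho(x,y) = (x=yr\wedge e_1|y)$ used in the paper. The difference is one of packaging: the paper cites Goursat's theorem to obtain, for every module $N$, the equivalence ``$N$ opens $r|x/\psi$ iff $N$ opens $e_1|y/(\psi(yr)\wedge e_1|y)$'' and then invokes the defining property of the generic (resp.\ critical) type at the lattice level, whereas you unwind the Goursat correspondence element-wise and exhibit the opening element ($ar$, resp.\ the witness $b$ with $br=a$) directly. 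Your version is more elementary and makes visible where the absorption identities $e_1r=r=re_2$ are used, at the small cost of reproving the relevant instance of Goursat's theorem rather than quoting it. One remark: the step ``$\psi(xr)\wedge e_1|x\in\wp_{e_1-\text{gen}}$'' relies on the fact that the realiser $a$ has a pp-type whose restriction to $[x=0,e_1|x]$ is exactly $\wp_{e_1-\text{gen}}$, so that any $e_1$-formula in its pp-type is $\equiv_C e_1|x$. This is part of the setup making $M_{e_1-\text{gen}}$ well defined, and the paper's proof leans on the same point (the phrase ``by definition''), so there is no gap, but it is worth being aware that both proofs use it.
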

\begin{proof}
Goursat's theorem, \cite[8.9]{Zieglermodules},  states that any $\rho\in \pp_R^2$ defines a lattice isomorphism between the (possibly trivial) intervals $[\rho(x,0),\exists y \ \rho(x,y)]$ and $[\rho(0,y),\exists x \ \rho(x,y)]$ defined by
\[\phi(x)\in [\rho(x,0),\exists y \ \rho(x,y)]\mapsto\exists x \ \phi(x)\wedge\rho(x,y)\in [\rho(0,y),\exists x \ \rho(x,y)].\] The inverse of this isomorphism sends $\psi(y)\in [\rho(0,y),\exists x \ \rho(x,y)]$ to $\exists y \ \psi(y)\wedge\rho(x,y)$.

(1) Goursat's theorem with $\rho(x,y)$ equal to $x=yr\wedge e_1|y$ implies that for any module $N$, $N$ opens $r\mid x/\psi$ if and only if $N$ opens $e_1|y/\exists x (\psi(x)\wedge x=yr\wedge e_1|y)$. So if $\psi<_{C} r\mid x$ then by definition $M_{e_1-\text{gen}}$ opens $e_1|y/\exists x (\psi(x)\wedge x=yr\wedge e_1|y)$. Thus $M_{e_1-\text{gen}}$ opens $r\mid x/\psi$.

(2) Again using Goursat's theorem with $\rho(x,y)$ equal to $x=yr\wedge e_1|y$, we get that for any module $N$, $N$ opens $\phi(y)/e_1|y\wedge yr=0$ if and only if $N$ opens $\exists y (\phi(y)\wedge x=yr\wedge e_1|y)/x=0$. Noting that $\exists y (\phi(y)\wedge x=yr\wedge e_1|y)$ is an $e_2$-formula since $r\in e_1Re_2$, the proof is now as in (1).


\end{proof}

The following lemma is an easy exercise in Ziegler topology.

\begin{lemma}\label{open}
Let $R$ be a ring and $C$ be an irreducible closed subset of $\Zg_R$. Suppose that
$\phi/\psi$ is a pp-pair such that $(\phi/\psi)\cap C\neq \emptyset$. Then $M\in C$ is a generic point in $C$ if and only if it opens
each pair $\phi'/\psi'$ such that $\psi\leq \psi'<_C \phi'\leq \phi$.

In particular, if there is a $C$-minimal pair, then the unique indecomposable pure-injective opening this pair is
the generic point.
\end{lemma}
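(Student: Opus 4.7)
The forward direction is immediate: if $M$ is a generic point of $C$ and $\psi\leq\psi'<_C\phi'\leq\phi$, then $(\phi'/\psi')$ is a basic open meeting $C$ (this is exactly the definition of $\psi'<_C\phi'$), so $M\in(\phi'/\psi')$, i.e., $M$ opens $\phi'/\psi'$.

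For the reverse direction, suppose $M\in C$ opens every pair $\phi'/\psi'$ with $\psi\leq\psi'<_C\phi'\leq\phi$. Taking $(\phi',\psi')=(\phi,\psi)$ already gives $M\in (\phi/\psi)$. To conclude $M$ is generic, let $(\sigma/\tau)$ be an arbitrary basic open with $(\sigma/\tau)\cap C\neq\emptyset$; I aim to show $M\in(\sigma/\tau)$. By the irreducibility of $C$, the triple intersection $(\sigma/\tau)\cap(\phi/\psi)\cap C$ is non-empty, so I pick some $N$ in it. The strategy is to exhibit a pair $\phi'/\psi'$ with $\psi\leq\psi'<_C\phi'\leq\phi$ and $(\phi'/\psi')\subseteq(\sigma/\tau)$; the hypothesis will then give $M\in(\phi'/\psi')\subseteq(\sigma/\tau)$.

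The natural candidate is $\phi':=\phi\wedge(\sigma+\psi)$ and $\psi':=\phi\wedge(\tau+\psi)$. The containments $\psi\leq\psi'\leq\phi'\leq\phi$ are routine lattice manipulations, and the inclusion $(\phi'/\psi')\subseteq(\sigma/\tau)$ is direct: any $N'$ with $\sigma(N')\subseteq\tau(N')$ satisfies $\sigma(N')+\psi(N')\subseteq\tau(N')+\psi(N')$, hence $\phi'(N')\subseteq\psi'(N')$, so $N'$ does not open $\phi'/\psi'$. The main obstacle will be verifying $\psi'<_C\phi'$, i.e., that some indecomposable pure-injective in $C$ actually opens this constructed pair. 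The natural candidate witness is $N$ itself, and one needs to extract from the two given elements $a\in\phi(N)\setminus\psi(N)$ and $b\in\sigma(N)\setminus\tau(N)$ an element lying in $\phi(N)\cap(\sigma(N)+\psi(N))$ but outside $\phi(N)\cap(\tau(N)+\psi(N))$. This step will likely use the indecomposability and pure-injectivity of $N$ via Ziegler's irreducibility criterion. In the degenerate case where $\phi'=_C\psi'$ a fall-back is needed, namely arguing that then $(\phi/\psi)\cap C\subseteq(\sigma/\tau)\cap C$, so that $(\phi/\psi)$ itself serves as the required witness pair.

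The ``in particular'' clause falls out of the main equivalence. If $\phi/\psi$ is a $C$-minimal pair, then $[\psi,\phi]_C$ contains no element strictly between $\psi$ and $\phi$, so the hypothesis collapses to ``$M$ opens $\phi/\psi$''. Uniqueness of such $M$ up to isomorphism in $C$ is an immediate consequence of \ref{distinguishingnonisopinj}: if $M,N\in C$ both opened $\phi/\psi$ but were non-isomorphic, the separating $\chi\in[\psi,\phi]$ produced by that lemma would satisfy $\psi<_C\chi<_C\phi$ (the alternatives $\chi=_C\phi$ and $\chi=_C\psi$ are ruled out by the defining properties of the pp-types at $M$ and $N$), contradicting $C$-minimality.
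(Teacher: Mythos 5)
Your forward direction and the ``in particular'' clause are both correct, and the overall strategy for the reverse direction --- pick $N\in(\sigma/\tau)\cap(\phi/\psi)\cap C$ by irreducibility and try to refine $(\sigma/\tau)$ into a pp-pair inside the interval $[\psi,\phi]$ that $N$ opens --- is exactly the right shape (the paper simply outsources this to Ziegler's refinement lemma, \cite[4.9]{Zieglermodules}). However, the concrete construction you propose does not work, and you yourself flag the missing verification. Taking $\phi'=\phi\wedge(\sigma+\psi)$ and $\psi'=\phi\wedge(\tau+\psi)$, modularity gives $\phi'(N)=(\phi\wedge\sigma)(N)+\psi(N)$ and $\psi'(N)=(\phi\wedge\tau)(N)+\psi(N)$, and these coincide whenever $\phi(N)\subseteq\tau(N)$ --- which certainly can happen even when $N$ opens both pairs. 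For instance over $\Z$ with $N=\Z(p^\infty)$, $\phi:px=0$, $\psi:x=0$, $\sigma:x=x$, $\tau:p^2x=0$, one has $\psi(N)\subsetneq\phi(N)\subsetneq\tau(N)\subsetneq\sigma(N)$, so $N$ opens both pairs but $\phi'(N)=\psi'(N)=N[p]$. So your ``natural candidate witness'' $N$ does not, in fact, witness $\psi'<_C\phi'$.

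The degenerate fall-back is also false: $\phi'=_C\psi'$ does not imply $(\phi/\psi)\cap C\subseteq(\sigma/\tau)\cap C$. Indeed, any $N''\in C$ opening $\phi/\psi$ but not $\sigma/\tau$ automatically satisfies $\phi'(N'')=\psi'(N'')$ (since $\sigma(N'')=\tau(N'')$), so the existence of such $N''$ is perfectly compatible with $\phi'=_C\psi'$. What Ziegler's Lemma 4.9 actually provides is the genuine refinement fact: given an indecomposable pure-injective $N$, a pp-type $p$ realized in $N$ with $\phi/\psi\in p$, and an open $U\ni N$, there exist $\phi''\in p$ and $\psi''\notin p$ with $(\phi''/\psi'')\subseteq U$; combined with the irreducibility criterion this produces a pair in $[\psi,\phi]$ opened by $N$ and contained in $(\sigma/\tau)$. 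Proving that requires a more careful use of the irreducibility criterion than your single fixed candidate pair --- the choice of refining pair must depend on $N$, not just on $\phi,\psi,\sigma,\tau$. So the overall route you took is sound, but as written there is a real gap at the one place you acknowledged you had not checked.
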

\begin{proof}
The forward direction is a direct consequence of the definition of generic point. The reverse direction follows from \cite[4.9]{Zieglermodules}.
\end{proof}

Now we deal with the main claim.

\begin{proposition}\label{mainsoberclaim}
Let $e_1,\ldots,e_n$ be a complete set of orthogonal indecomposable idempotents of a serial ring $R$. Let $C$ be an irreducible closed subset of $\Zg_R$ and suppose that $\left(e_i|x/x=0\right)\cap C\neq \emptyset$ for $1\leq i\leq m$ and $\left(e_i|x/x=0\right)\cap C= \emptyset$ for $m+1\leq i\leq n$.
 Then at least one of the following holds.
\begin{enumerate}
\item The interval $[x=0, e_i\mid x]$ has a $C$-minimal pair for some $1\leq i\leq m$.
\item There exists $1\leq i\leq m$ such that $C$ is the closure of $M_{e_i-\text{gen}}$.
\item There exists $1\leq i\leq m$ such that $C$ is the closure of $M_{e_i-\text{crit}}$.
\end{enumerate}
\end{proposition}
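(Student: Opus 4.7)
The plan is to prove the contrapositive: assuming (1) fails—so no $[x=0,e_i|x]$ contains a $C$-minimal pair for any $i\leq m$—I want to derive that (2) or (3) holds. I would argue by a further contradiction: suppose neither (2) nor (3). Then, by Lemma \ref{open}, for each $i\leq m$ there is a pair $\phi_i/\psi_i$ (respectively $\phi_i'/\psi_i'$) opened in $C$ that $M_{e_i-\text{gen}}$ (respectively $M_{e_i-\text{crit}}$) fails to open. The aim is to combine these $2m$ witnesses with Lemma \ref{div} to manufacture a $C$-minimal pair in some $[x=0,e_i|x]$, contradicting the failure of (1).

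First I would reduce each witness pair to canonical form. By Lemma \ref{descpp1flaoverserial}, each $\phi_i$ is equivalent to a sum $\sum_l s_l|x\wedge xt_l=0$ with $s_l\in e_{a_l}R$ and $t_l\in Re_{b_l}$; since this sum is strictly above $\psi_i$ over $C$, some individual summand must be, and I replace $\phi_i$ by such a summand. A further refinement using the right/left decompositions $s=\sum_k se_k$ and $t=\sum_k e_kt$ lets me assume the top sits inside a single ``block'' $e_aRe_b$, placing the witness pair inside a specific $e_b$-chain where Lemma \ref{div} becomes applicable; the same reduction is carried out for the critical witnesses $\phi_i'/\psi_i'$ in a dual form.

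Lemma \ref{div}(1) then shows that $M_{e_a-\text{gen}}$ opens any pair of the form $r|x/\psi$ with $r\in e_aRe_b$ and $\psi<_C r|x$ an $e_b$-formula; dually (2) controls the critical side via $M_{e_b-\text{crit}}$ on annihilation-type pairs. The assumed non-opening of each witness by the relevant $M_{e_i-\text{gen}}$ or $M_{e_i-\text{crit}}$ therefore forces a ``collapse'' within its ambient $e_*$-chain (which is totally ordered by Corollary \ref{irredclosedsetchainserial}): the top of a generic-witness has to agree on $C$ with something strictly below $e_a|x$, and dually for the critical side.

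The main obstacle is the final combinatorial step: showing that the $2m$ collapsing constraints, when accumulated and organized across the various primitive idempotents, force two pp-formulae in some $[x=0,e_{i_0}|x]_C$ to actually cover—producing the required $C$-minimal pair. The essential tools here are the total ordering of each $e_*$-chain, the distributivity of $e$-formulae (Lemma \ref{efladistr}), and the interplay between the left and right actions encoded by the elements of $e_aRe_b$ appearing through Goursat's theorem in the proof of Lemma \ref{div}. I expect this bookkeeping to be the most delicate part, since the collapse constraints arrive tagged by different idempotents and by two different ``directions'' (generic vs.\ critical), and must be shown to jointly pin down a cover in one specific chain rather than merely a family of unrelated strict inequalities.
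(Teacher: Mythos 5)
Your high-level architecture diverges from the paper's, and the divergence is exactly where your argument stalls. You negate all three clauses simultaneously and hope to extract a $C$-minimal pair by combining the $2m$ ``non-opening'' witnesses. The paper instead proves: $\neg(1)$ alone already forces $C = \bigcup_{i\le m}\cl M_{e_i\text{-gen}}\cup\bigcup_{i\le m}\cl M_{e_i\text{-crit}}$, and then irreducibility of $C$ (applied to this finite union of closed sets) hands you (2) or (3) for free. You never need to manufacture a minimal pair from anything.

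The engine of the paper's covering claim is a clean dichotomy that your proposal circles around but does not isolate: since $[x=0,e_j|x]_C$ is totally ordered (Corollary \ref{irredclosedsetchainserial}), every class in it is represented by \emph{either} a divisibility formula $s|x$ with $s\in e_iRe_j$ \emph{or} an annihilator formula $xr=0\wedge e_j|x$ with $r\in e_jRe_i$ --- the conjunctions $s|x\wedge xt=0$ from Lemma \ref{descpp1flaoverserial} collapse to one of the conjuncts in a chain. So given any $e_j$-pair $\phi/\psi$ opened in $C$ that is not $C$-minimal, pick an intermediate $\sigma$; $\sigma$ falls on one side of the dichotomy, and Lemma \ref{div}(1) or (2) immediately says $M_{e_i\text{-gen}}$ or $M_{e_i\text{-crit}}$ opens $\sigma/\psi$ or $\phi/\sigma$, hence opens $\phi/\psi$. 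That is the whole argument; there is no bookkeeping across idempotents.

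Concretely, the gap in your proposal is the paragraph beginning ``The main obstacle is the final combinatorial step.'' The $2m$ collapse constraints you accumulate do not obviously pin down a covering relation in any single chain, and I see no route to closing this without first establishing that, under $\neg(1)$, $C$ is covered by the $2m$ closures --- which is precisely the paper's route and renders your entire witness-combination machinery unnecessary. If you reorganize the argument to prove the covering claim directly (one opened pair at a time, using the chain dichotomy and Lemma \ref{div}) and then invoke irreducibility once on the finite union, the proof closes cleanly.
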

\begin{proof}

We know that the interval $[x=0, e_1| x]_C$ is totally ordered, hence each pp-formula in it is equivalent to either a
divisibility formula $s|x$, for some $s\in e_iRe_1$ with $1\leq i\leq m$, or to an annihilator formula $xr=0\wedge e_1| x$ for some $r\in e_1Re_i$ with $1\leq i\leq m$.

Suppose (1) does not hold. We will show that every pair $\phi/\psi$ of $e_1$-formulae which is opened by some module in $C$ is opened by $M_{e_i-\text{gen}}$ or $M_{e_i-\text{crit}}$ for some $1\leq i\leq m$. Since $\phi/\psi$ is not a $C$-minimal pair there exists $\sigma$ such that $\phi>_{C}\sigma>_{C}\psi$.

If $\sigma$ is equivalent to $s\mid x$ for $s\in e_iRe_1$ then $M_{e_i-\text{gen}}$ opens $\sigma/\psi$, by \ref{div}, and hence opens $\phi/\psi$.

If $\sigma$ is equivalent to $xr=0\wedge e_1\mid x$ for $r\in e_1Re_i$ then $M_{e_i-\text{crit}}$ opens the pair $\phi/\sigma$, by \ref{div}, and hence opens $\phi/\psi$.

Thus
\[C=\bigcup_{i=1}^m\text{cl}M_{e_i-\text{gen}}\cup\bigcup_{i=1}^m \text{cl}M_{e_i-\text{crit}}.\]

Since $C$ is irreducible, either $C=\text{cl}M_{e_i-\text{gen}}$ or $C=\text{cl}M_{e_i-\text{crit}}$ for some $1\leq i\leq m$.

\end{proof}

If $N\in\Zg_R$ then write $N_{e_i-\text{gen}}$ for $M^{\cl N}_{e_i-\text{gen}}$ and $N_{e_i-\text{crit}}$ for $M^{\cl N}_{e_i-\text{crit}}$.

\begin{cor}\label{mainclosureclaim}
Let $e_1,\ldots,e_n$ be a complete set of orthogonal indecomposable idempotents of a serial ring $R$. Let $N$ be an indecomposable pure-injective $R$-module and  suppose that $Ne_i\neq 0$ for $1\leq i\leq m$ and $Ne_i=0$ for $m+1\leq i\leq n$.
 Then at least one of the following holds.
\begin{enumerate}
\item $N$ has a minimal pair in the interval $[x=0,e_i]$ for some $1\leq i\leq m$
\item There exists $1\leq i\leq m$ such that $N$ is in the closure of $N_{e_i-\text{gen}}$ and hence $N\equiv N_{e_i-\text{gen}}$.
\item There exists $1\leq i\leq m$ such that $N$ is in the closure of $N_{e_i-\text{crit}}$ and hence $N\equiv N_{e_i-\text{crit}}$.
\end{enumerate}
\end{cor}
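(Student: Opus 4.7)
The plan is to deduce this corollary directly from Proposition \ref{mainsoberclaim} by taking $C := \cl N$. Since $C$ is the closure of a single point it is an irreducible closed subset of $\Zg_R$, so the proposition will apply once I match up the hypotheses. First I would verify the translation: $(e_i|x/x=0)\cap C \neq \emptyset$ if and only if $Ne_i \neq 0$. The ``if'' direction is immediate, because $N\in C$ and $N\in (e_i|x/x=0)$ exactly when $Ne_i\neq 0$. For ``only if'', note that $(e_i|x/x=0)$ is open, so if $Ne_i = 0$ then $N$ lies in its closed complement, and hence so does all of $\cl N = C$.

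With the hypotheses of \ref{mainsoberclaim} in place, I would treat each output case in turn. In case (1) a $C$-minimal pair $\phi/\psi$ in $[x=0,e_i|x]$ is handed to us, and I would argue that for $C = \cl N$ this coincides with an $N$-minimal pair. The reason is the general principle that, for any basic open $(\alpha/\beta)$, one has $(\alpha/\beta)\cap \cl N \neq \emptyset$ iff $N \in (\alpha/\beta)$: its complement in $\Zg_R$ is closed, so if it contains $N$ it contains $\cl N$. Consequently $\phi >_C \sigma$ iff $\phi(N) \supsetneq \sigma(N)$, so the orders $\geq_C$ and the ``opening by $N$'' order agree, and minimality in one sense is minimality in the other.

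In cases (2) and (3) the proposition gives $C = \cl M^{C}_{e_i-\text{gen}}$ or $C = \cl M^{C}_{e_i-\text{crit}}$ for some $1\leq i\leq m$. With the notation $N_{e_i-\text{gen}} := M^{\cl N}_{e_i-\text{gen}}$ and $N_{e_i-\text{crit}} := M^{\cl N}_{e_i-\text{crit}}$ introduced just before the corollary, this means $\cl N = \cl N_{e_i-\text{gen}}$ (resp.\ $\cl N = \cl N_{e_i-\text{crit}}$), so $N$ is topologically indistinguishable from $N_{e_i-\text{gen}}$ (resp.\ $N_{e_i-\text{crit}}$). Invoking the earlier proposition that topologically indistinguishable points of $\Zg_R$ are elementarily equivalent yields $N \equiv N_{e_i-\text{gen}}$ or $N \equiv N_{e_i-\text{crit}}$, as required.

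No serious obstacle is expected; the work is essentially bookkeeping needed to translate statements phrased in terms of the irreducible closed set $C$ into statements phrased directly in terms of the point $N$ whose closure is $C$.
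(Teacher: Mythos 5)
Your proposal is correct and is precisely the intended deduction: the paper states this as a corollary with no written proof, and the evident route is exactly the one you take, namely apply Proposition \ref{mainsoberclaim} with $C=\cl N$, using that $(\alpha/\beta)\cap\cl N\neq\emptyset$ iff $N\in(\alpha/\beta)$ to match the hypotheses and to identify $C$-minimality with $N$-minimality (here the total ordering of $[x=0,e_i|x]_C$ from \ref{irredclosedsetchainserial} is what makes the equivalence $\phi>_C\sigma\Leftrightarrow\phi(N)\supsetneq\sigma(N)$ work in both directions), then use that $N_{e_i-\text{gen}},N_{e_i-\text{crit}}\in\cl N$ together with $C=\cl M^C_{e_i-\ast}$ to get equality of closures, hence topological indistinguishability, hence elementary equivalence.
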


Now we are in a position to prove the main result of this section.

\begin{theorem}\label{sober}
Let $R$ be a serial ring. The Ziegler spectrum of $R$ is sober.
\end{theorem}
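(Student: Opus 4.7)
The plan is to apply Proposition \ref{mainsoberclaim} directly to an arbitrary irreducible closed subset $C$ of $\Zg_R$. By that trichotomy, at least one of the following holds: (1) there is a $C$-minimal pair in $[x=0,e_i|x]$ for some primitive idempotent $e_i$ with $(e_i|x/x=0)\cap C\neq\emptyset$; (2) $C=\cl M_{e_i\text{-gen}}$ for some such $i$; or (3) $C=\cl M_{e_i\text{-crit}}$ for some such $i$. In cases (2) and (3) we are finished by definition, since $C$ is exhibited as the closure of a single indecomposable pure-injective.

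It therefore remains to handle case (1). Suppose $\phi/\psi$ is a $C$-minimal pair with $\psi<_C\phi$ in the interval $[x=0,e_i|x]$. Since $\phi>_C\psi$, there exists some $N\in C$ opening $\phi/\psi$. By Lemma \ref{open}, any such $N$ is a generic point of $C$, i.e.\ $\cl N=C$. Hence in all three cases $C$ is the closure of a point, proving that $\Zg_R$ is sober.

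The substantive content of the theorem has already been absorbed into the trichotomy \ref{mainsoberclaim} and the easy topological observation \ref{open}; the proof itself is then a short case split. The only point to double-check is that Lemma \ref{open} genuinely applies in case (1): the pair $\phi/\psi$ is taken in the lattice $\pp_R^1$, and a $C$-minimal pair there is in particular $C$-minimal in the sense of Lemma \ref{open}, so any indecomposable pure-injective in $C$ that opens it is forced to open every $\phi'/\psi'$ with $\psi\leq\psi'<_C\phi'\leq\phi$. No further work is needed.
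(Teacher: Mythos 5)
Your proof is correct and takes essentially the same approach as the paper: both invoke the trichotomy of Proposition \ref{mainsoberclaim} and then dispatch the $C$-minimal-pair case via Lemma \ref{open}. The only cosmetic difference is that you spell out why the module opening the minimal pair is generic, whereas the paper cites the ``in particular'' clause of Lemma \ref{open} directly.
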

\begin{proof}
Let $e_1,\ldots,e_n$ be a complete set of orthogonal indecomposable idempotents for $R$. Suppose that $C$ is an irreducible closed subset of $\Zg_R$ and suppose that $\left(x=x/xe_i=0\right)\cap C\neq \emptyset$ for $1\leq i\leq m$ and $\left(x=x/xe_i=0\right)\cap C= \emptyset$ for $m+1\leq i\leq n$. Then by \ref{mainsoberclaim}, either $[x=0,e_1|x]$ has a $C$-minimal pair or $C$ is the closure of $M_{e_i-\text{gen}}$ or $M_{e_i-\text{crit}}$ for some $1\leq i\leq m$.
If $C$ has a minimal pair then by \ref{open}, $C$ is the closure of the unique point opening that pair.
\end{proof}

\begin{question}
What is the relationship between the modules $M_{e_i-\text{gen}}$ (respectively $M_{e_i-\text{crit}}$) as $i$ varies?
\end{question}

We say that a commutative ring is a \textbf{valuation ring} if its lattice of ideals is totally ordered and a \textbf{Pr\"{u}fer ring} if its localisation at each of its maximal ideals is a valuation ring. Equivalently, a commutative ring is a Pr\"{u}fer ring if its ideal lattice is distributive \cite[Theorem 1]{Arithrings}. Eklof and Herzog showed that the Pr\"{u}fer rings are exactly those commutative rings with distributive lattice of pp-$1$-formulas \cite[3.1]{HerzogEklofserialrings}.

\begin{proposition}\cite[6.5]{Lornasober}
Let $R$ be a commutative ring. The following are equivalent:
\begin{enumerate}[(i)]
\item $\Zg_R$ is sober,
\item for all $\mfrak{p}\lhd R$ prime, $\Zg_{R_{\mfrak{p}}}$ is sober,
\item for all $\mfrak{m}\lhd R$ maximal, $\Zg_{R_{\mfrak{m}}}$ is sober.
\end{enumerate}
\end{proposition}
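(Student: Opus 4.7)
The plan is to reduce everything to the local hypothesis via a prime ideal attached to each irreducible closed subset. Throughout I rely on the standard identification of $\Zg_{R_{\mfrak{p}}}$ with the closed subspace of $\Zg_R$ consisting of indecomposable pure-injective $R$-modules on which every $s\in R\setminus \mfrak{p}$ acts invertibly:
\[\Zg_{R_{\mfrak{p}}}=\bigcap_{s\notin\mfrak{p}}\Zg_R\setminus\left((xs=0/x=0)\cup(x=x/s\mid x)\right),\]
with the subspace topology from $\Zg_R$.

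Granted this, (i)$\Rightarrow$(ii) is immediate: an irreducible closed $C\subseteq\Zg_{R_{\mfrak{p}}}$ is irreducible closed in $\Zg_R$, any generic point of $C$ in $\Zg_R$ lies in $C\subseteq\Zg_{R_{\mfrak{p}}}$, and closures in the two spaces agree because $\Zg_{R_{\mfrak{p}}}$ is already closed in $\Zg_R$. The implication (ii)$\Rightarrow$(iii) is trivial.

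The substantive direction is (iii)$\Rightarrow$(i). For each $N\in\Zg_R$ the ring $\End N$ is local, so the set of $r\in R$ acting non-invertibly on $N$ is the preimage of the unique maximal ideal of $\End N$ under the structure map $R\to \End N$; the corresponding quotient is a division ring, and the image of the \emph{commutative} ring $R$ inside it is an integral domain, so this preimage is a prime ideal $\mfrak{p}_N$ of $R$. Note that $r\in\mfrak{p}_N$ iff $N$ belongs to the open set $U_r:=(xr=0/x=0)\cup(x=x/r\mid x)$. Given an irreducible closed $C\subseteq\Zg_R$, set
\[\mfrak{p}_C:=\bigcup_{N\in C}\mfrak{p}_N=\{r\in R : C\cap U_r\neq\emptyset\}.\]
I claim $\mfrak{p}_C$ is a prime ideal. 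Closure under $R$-multiplication is inherited pointwise from each $\mfrak{p}_N$. Closure under addition is the crucial step and uses irreducibility: if $r,s\in\mfrak{p}_C$, then $C\cap U_r$ and $C\cap U_s$ are nonempty relatively open subsets of $C$, so they meet in some $N\in C$, which has $r,s\in\mfrak{p}_N$ and hence $r+s\in\mfrak{p}_N\subseteq\mfrak{p}_C$. Primality is then immediate because the complement $R\setminus\mfrak{p}_C$ is manifestly saturated and multiplicatively closed. Now choose any maximal ideal $\mfrak{m}\supseteq\mfrak{p}_C$; every $s\notin\mfrak{m}$ acts invertibly on every $N\in C$, so $C\subseteq\Zg_{R_{\mfrak{m}}}$. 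As $C$ is closed and irreducible in $\Zg_R$ and $\Zg_{R_{\mfrak{m}}}$ is a closed subspace of $\Zg_R$, $C$ is closed and irreducible in $\Zg_{R_{\mfrak{m}}}$, and (iii) produces a generic point, which is automatically generic for $C$ in $\Zg_R$.

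The main obstacle is verifying that $\mfrak{p}_C$ is closed under addition; this is precisely where irreducibility of $C$ enters essentially, since without it one could easily arrange $r$ and $s$ non-invertible on disjoint pieces of $C$ while $r+s$ acts invertibly everywhere on $C$. Once $\mfrak{p}_C$ is in hand, the reduction to the local hypothesis is a formal transfer between $R$ and its localisations.
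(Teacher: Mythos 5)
The paper cites this result from \cite{Lornasober} without reproducing a proof, so I am assessing your argument on its own merits. It is correct, and it is the natural argument one would expect the cited source to use. The key points all check out: since $R$ is commutative, $R\to\End N$ is a ring map into a local ring, so $\mfrak{p}_N$ (the preimage of the maximal ideal of $\End N$) is a prime of $R$, and $r\in\mfrak{p}_N$ exactly when $N$ lies in the basic open set $U_r=(xr=0/x=0)\cup(x=x/r\mid x)$. For $C$ irreducible, closure of $\mfrak{p}_C=\bigcup_{N\in C}\mfrak{p}_N$ under addition is precisely the place irreducibility is essential (two nonempty relatively open subsets of $C$ must meet); closure under $R$-scaling and the multiplicative closedness of the complement hold pointwise from each $\mfrak{p}_N$ being prime, and $1\notin\mfrak{p}_C$ gives properness. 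The identification of $\Zg_{R_{\mfrak{p}}}$ as a closed subspace of $\Zg_R$ via the ring epimorphism $R\to R_{\mfrak{p}}$ is standard (the paper itself invokes the analogous fact from \cite[5.5.3]{PSL} for other epimorphisms), and your intersection formula exhibits the complement as a union of basic opens, so closedness is clear. Given that, both the transfer of irreducibility and the identification of closures between $\Zg_{R_{\mfrak{m}}}$ and $\Zg_R$ are as you say, so (iii)$\Rightarrow$(i) goes through. Minor remark: for primality you only need that $\mfrak{p}_C$ is a proper ideal with multiplicatively closed complement; invoking saturation is harmless but unnecessary.
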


Since all valuation rings are trivially serial, we get the following corollary.

\begin{cor}\label{Prufersober}
Let $R$ be a Pr\"{u}fer ring. The Ziegler spectrum of $R$ is sober.
\end{cor}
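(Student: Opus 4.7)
The plan is a direct chain of the two results immediately preceding the statement, so I would not expect to need any new ideas. First I would unpack the definition of a Pr\"{u}fer ring: for every maximal ideal $\mfrak{m}$ of $R$, the localisation $R_{\mfrak{m}}$ is a commutative valuation ring, that is, its ideal lattice is totally ordered. Taking $1$ as the unique primitive idempotent, $R_{\mfrak{m}}$ is then uniserial both as a right and as a left module over itself, so in particular it is serial in the sense required by Theorem \ref{sober}.

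Applying Theorem \ref{sober} to each such localisation yields that $\Zg_{R_{\mfrak{m}}}$ is sober for every maximal $\mfrak{m} \lhd R$. I would then invoke the implication (iii)$\Rightarrow$(i) of the proposition cited from \cite[6.5]{Lornasober}, which for a commutative ring reduces soberness of $\Zg_R$ to soberness of $\Zg_{R_{\mfrak{m}}}$ at every maximal ideal. This gives soberness of $\Zg_R$.

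There is no genuine obstacle here; the corollary is essentially a two-step composition of already-established results. The only sanity check worth making is that a commutative valuation ring satisfies the definition of \emph{serial} used in this paper, and this is immediate: the lattices of right and of left submodules of the regular module both coincide with the ideal lattice, which is totally ordered by hypothesis, so $R_{\mfrak{m}}$ is uniserial on both sides and $\{1\}$ serves as a complete set of primitive orthogonal idempotents.
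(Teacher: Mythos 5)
Your proposal is correct and is exactly the paper's argument: valuation rings are serial, so Theorem \ref{sober} gives soberness of each $\Zg_{R_{\mfrak{m}}}$, and then \cite[6.5]{Lornasober} transfers this to $\Zg_R$. Nothing to add.
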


\section{Uniserial rings}\label{pointsforuniserial}

Throughout this section, $R$ will be a uniserial ring. The (unique) maximal right ideal of $R$ is equal to the Jacobson radical $\J(R)$ of $R$, which is equal to the (unique) maximal left ideal of $R$. Thus, $\J(R)$ is the set of non-units of $R$.

Over a uniserial ring, each indecomposable pure-injective module $N$ has a unique generic and critical pp-type namely
\[\wp^{N}_\text{gen}:=\{\phi\in \pp_R^1\st \phi(N)=N\}\] and
\[\wp^N_{\text{crit}}:\{\phi\in\pp_R^1 \st \phi(N)\neq \{0\}\}.\]

These types are exactly $\wp^{C}_{1-\text{gen}}$ and $\wp^{C}_{1-\text{crit}}$ where $C$ is the Ziegler closure of $N$. We write $N_{\text{gen}}$ (respectively $N_{\text{crit}}$) for the indecomposable pure-injective module realising $\wp^{N}_\text{gen}$ (respectively $\wp^N_{\text{crit}}$).

\begin{lemma}\label{uniserialringsgenandcrit}
Let $R$ be a uniserial ring, $\phi\in\pp_R^1$ and $N$ an indecomposable pure-injective.  If $N\in\left(a|x/\phi\right)$ then $N_{\text{gen}}\in \left(a|x/\phi\right)$. If $N\in\left(\phi/xb=0\right)$ then $N_{\text{crit}}\in \left(\phi/xb=0\right)$.
\end{lemma}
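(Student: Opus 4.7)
The plan is to reduce both assertions directly to Lemma \ref{div}, using that in the uniserial case the lattice $\pp_R^1 C$ is totally ordered, where $C=\cl N$. Since $R$ is uniserial, its only primitive idempotent is $1$, so $N_{\text{gen}}=M^{C}_{1\text{-gen}}$ and $N_{\text{crit}}=M^{C}_{1\text{-crit}}$; every pp-$1$-formula is trivially a $1$-formula, and by Corollary \ref{irredclosedsetchainserial} applied with $e=1$ the entire interval $[x=0,\,1|x]_C=\pp_R^1 C$ is totally ordered. Once this setup is in place, both claims amount to an easy totality-plus-\ref{div} argument.

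For the first claim, suppose $N\in(a|x/\phi)$. By the convention $(a|x/\phi)=(a|x/(a|x\wedge\phi))$, this says $N$ opens the pair $a|x/(a|x\wedge\phi)$, so $a|x\not\leq_N a|x\wedge\phi$ and hence $a|x\not\leq_C a|x\wedge\phi$ (because $N\in C$). Totality of $\pp_R^1 C$ then forces $a|x\wedge\phi<_C a|x$. Now Lemma \ref{div}(1), applied with $r=a\in 1\cdot R\cdot 1$ and with the $1$-formula $\psi:=a|x\wedge\phi$, yields that $N_{\text{gen}}=M^{C}_{1\text{-gen}}$ opens $a|x/(a|x\wedge\phi)$, i.e.\ $N_{\text{gen}}\in(a|x/\phi)$.

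For the second claim, suppose $N\in(\phi/xb=0)$, so $\phi\not\leq_N xb=0$, and therefore $\phi\not\leq_C xb=0$. Totality again gives $xb=0<_C\phi$. Since $xb=0$ is equivalent to the $1$-formula $1|x\wedge xb=0$, Lemma \ref{div}(2), applied with $r=b$ and with the $1$-formula $\phi$, yields that $N_{\text{crit}}=M^{C}_{1\text{-crit}}$ opens $\phi/(1|x\wedge xb=0)$, which is exactly $N_{\text{crit}}\in(\phi/xb=0)$.

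There is no substantive obstacle: the whole content of the lemma is that, in the uniserial setting, every pp-pair opened by a point $N$ is a \emph{comparable} pair in the chain $\pp_R^1 C$, and Lemma \ref{div} was tailor-made to promote the opening from $N$ to $N_{\text{gen}}$ or $N_{\text{crit}}$. The only thing to be slightly careful with is unfolding the conventions for the basic open sets $(a|x/\phi)$ and $(\phi/xb=0)$ so that the hypotheses of Lemma \ref{div} line up precisely.
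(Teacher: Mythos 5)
Your proof is correct and follows the same route as the paper, which simply cites Lemma \ref{div}; you have filled in the (routine) bookkeeping of why the uniserial case reduces to $e_1=e_2=1$ and how the pp-pair convention and totality of $\pp_R^1 C$ deliver the hypothesis $\psi <_C r|x$ (resp.\ $xr=0<_C\phi$) needed by Lemma \ref{div}.
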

\begin{proof}
This is a direct consequence of \ref{div}.
%
\end{proof}

\begin{lemma}
Let $N$ be an indecomposable pure-injective over a uniserial ring $R$. The Ziegler closure of $N$ is equal to $N$ together with the Ziegler closures of $N_{\text{gen}}$ and $N_{\text{crit}}$.
\end{lemma}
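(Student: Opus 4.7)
The plan is to prove the two inclusions separately; set $C = \cl N$.

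For $\{N\}\cup\cl(N_{\text{gen}})\cup\cl(N_{\text{crit}})\subseteq C$: the point $N$ lies in $C$ trivially, and the remark immediately preceding Lemma~\ref{div} observes that $M_{e-\text{gen}},M_{e-\text{crit}}$ lie in every irreducible closed set; applied to $C$ and the unique primitive idempotent $1$ of $R$, this gives $N_{\text{gen}},N_{\text{crit}}\in C$. Since $C$ is closed, both $\cl(N_{\text{gen}})$ and $\cl(N_{\text{crit}})$ are contained in $C$.

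For the reverse inclusion, I would fix $M\in C$ with $M\neq N$ and show $M\in\cl(N_{\text{gen}})\cup\cl(N_{\text{crit}})$. The first step is a purely topological reduction: since basic opens form a basis (though, as the introduction recalls, they need not be closed under intersection), the condition $M\in\cl(N_{\text{gen}})\cup\cl(N_{\text{crit}})$ is equivalent to the assertion that every basic open $(\phi/\psi)$ containing $M$ contains $N_{\text{gen}}$ or $N_{\text{crit}}$. Indeed, if not, basic opens $U_1,U_2$ witnessing $M\notin\cl(N_{\text{gen}})$ and $M\notin\cl(N_{\text{crit}})$ would yield a single basic open $V\subseteq U_1\cap U_2$ containing $M$ but neither $N_{\text{gen}}$ nor $N_{\text{crit}}$.

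With that in place, fix such a basic open $(\phi/\psi)$. Since $M\in C=\cl N$, we have $N\in(\phi/\psi)$ as well. If $\phi/\psi$ were $C$-minimal, then by Lemma~\ref{open} the unique indecomposable pure-injective in $C$ opening it would be the generic point of $C$; but $N$ itself is a generic point of $C$ (since $C=\cl\{N\}$), which forces this point to be $N$ and hence $M=N$, contradicting our assumption. So $\phi/\psi$ is not $C$-minimal, and the argument inside the proof of Proposition~\ref{mainsoberclaim} applies verbatim to this one pair: choose $\sigma$ with $\phi>_C\sigma>_C\psi$; by Corollary~\ref{irredclosedsetchainserial} the interval $[x=0,x=x]_C$ is totally ordered, and combined with Lemma~\ref{descpp1flaoverserial} this lets me take $\sigma$ to be $C$-equivalent either to a divisibility formula $s|x$ or to an annihilator formula $xt=0$. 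In the first case part~(1) of Lemma~\ref{div} gives that $N_{\text{gen}}$ opens $\sigma/\psi$, and in the second case part~(2) gives that $N_{\text{crit}}$ opens $\phi/\sigma$; either way $(\phi/\psi)$ contains one of $N_{\text{gen}},N_{\text{crit}}$, as required.

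The only real subtlety I anticipate is the topological reduction to the basic-open criterion, precisely because basic opens in $\Zg_R$ need not be closed under intersection; after that, the core of the proof is simply a localised version of the trichotomy in Proposition~\ref{mainsoberclaim} applied to a single basic open neighbourhood of $M$.
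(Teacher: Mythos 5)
Your proof is correct and rests on exactly the same core argument as the paper: given a strictly intermediate $\sigma$, use Lemma~\ref{descpp1flaoverserial} together with the total ordering of Corollary~\ref{irredclosedsetchainserial} to replace $\sigma$ by a divisibility or annihilator formula, and then apply Lemma~\ref{div} to see that $N_{\text{gen}}$ or $N_{\text{crit}}$ opens the pair. Where you differ is in the packaging: the paper first invokes the trichotomy of Corollary~\ref{mainclosureclaim} to split off the cases $N\in\cl N_{\text{gen}}$ and $N\in\cl N_{\text{crit}}$, and in the remaining isolated case proves that every basic open $(\phi/\psi)\ni N$ either contains one of $N_{\text{gen}},N_{\text{crit}}$ or isolates $N$, then applies this with $U=\Zg_R\setminus\cl\{N_{\text{gen}},N_{\text{crit}}\}$ (a step that implicitly uses the same ``refine to a single basic open'' observation you make explicit). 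You instead prove the two inclusions directly, dispose of the $C$-minimal case by the uniqueness clause of Lemma~\ref{open} rather than via the trichotomy, and spell out the topological reduction — that $M\in\cl N_{\text{gen}}\cup\cl N_{\text{crit}}$ is equivalent to every basic open at $M$ hitting $\{N_{\text{gen}},N_{\text{crit}}\}$, despite basic opens not being closed under intersection. The extra care there is well placed; otherwise the two proofs are the same argument differently organised.
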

\begin{proof}
Corollary \ref{mainclosureclaim} specialised to uniserial rings implies that either $N$ is in the closure of $N_{\text{gen}}$, $N$ is in the closure of $N_{\text{crit}}$ or $N$ has a minimal pair and is hence isolated in its closure. So suppose that $N$ is isolated in its closure.

We will now show that for any open set $U$ with $N\in U$, either $N_{\text{crit}}\in U$, $N_{\text{gen}}\in U$ or $U$ isolates $N$ in its closure. It is enough to show that this condition holds for basic open sets and thus we may assume $U$ is given by a pair of pp-$1$-formulae. Suppose that $N\in\left(\phi/\psi\right)$ but $\left(\phi/\psi\right)$ does not isolate $N$ in its closure. Thus there exists $\sigma$ such that $\phi(N)>\sigma(N)>\psi(N)$. Since $R$ is uniserial, working with respect to the theory of $N$ we may assume that $\sigma$ is either of the form $r|x$ or $xs=0$. If $\sigma$ is of the form $r|x$ then $N_{gen}$ opens the pair $\sigma/\psi$ and if $\sigma$ is of the form $xs=0$ then $N_{crit}$ opens the pair $\phi/\sigma$. Thus for all open sets $U$, either $N_{gen}\in U$, $N_{crit}\in U$ or $U$ isolates $N$ in its closure.

Thus the open set $U:=\Zg_R\backslash\text{cl}\{N_{\text{gen}},N_{\text{crit}}\}$ isolates $N$ in its closure. So, if $M\in\text{cl}\{N\}$ then either $M\in \text{cl}\{N_{\text{gen}},N_{\text{crit}}\}$ or $M\in U$ and thus $N=M$.

\end{proof}

\begin{cor}
If $N,M\in\Zg_R$ are not isolated in their closures, $N_{\text{gen}}=M_{\text{gen}}$ and $N_{\text{crit}}=M_{\text{crit}}$ then $N\equiv M$.
\end{cor}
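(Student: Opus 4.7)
The strategy is to show that $N$ and $M$ are topologically indistinguishable in $\Zg_R$, and then invoke the proposition from Section 1 that topologically indistinguishable indecomposable pure-injectives are elementarily equivalent. By symmetry it suffices to show one direction: that for every basic open $U$ with $N\in U$ we also have $M\in U$.

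Fix such a $U$. The preceding lemma already contains the key observation: in its proof it is established that for any open set $U$ with $N\in U$, either $N_{\text{gen}}\in U$, or $N_{\text{crit}}\in U$, or $U$ isolates $N$ in its closure. Since by hypothesis $N$ is not isolated in its closure, this last alternative cannot occur, so $N_{\text{gen}}\in U$ or $N_{\text{crit}}\in U$. Applying the assumed equalities $N_{\text{gen}}=M_{\text{gen}}$ and $N_{\text{crit}}=M_{\text{crit}}$, this yields $M_{\text{gen}}\in U$ or $M_{\text{crit}}\in U$.

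Now apply the preceding lemma to $M$: the closure of $M$ equals $\{M\}\cup\text{cl}\{M_{\text{gen}}\}\cup\text{cl}\{M_{\text{crit}}\}$, and in particular $M_{\text{gen}},M_{\text{crit}}\in\text{cl}\{M\}$. By definition of closure, every open set containing $M_{\text{gen}}$ (respectively $M_{\text{crit}}$) must contain $M$. Therefore $M\in U$, as required. The reverse implication follows by exchanging the roles of $N$ and $M$, since $M$ is also assumed not to be isolated in its closure.

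I do not expect any real obstacle: the substantive topological work has already been done in the preceding lemma, and this corollary is essentially its bookkeeping. The one subtlety to be careful about is keeping straight the directions in the relation ``lies in the closure of'', but this is handled by the fact that $x\in\text{cl}\{y\}$ means every open neighbourhood of $x$ contains $y$, which is precisely the implication needed in the last step.
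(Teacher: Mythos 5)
Your proof is correct, and it is a more careful unpacking of what the paper does in a single sentence. The paper's proof asserts that since each of $N,M$ is elementarily equivalent to its generic or its critical module, the conclusion follows from $N_{\text{gen}}=M_{\text{gen}}$ and $N_{\text{crit}}=M_{\text{crit}}$; this is true, but the mixed case (say $N\equiv N_{\text{gen}}$ while $M\equiv M_{\text{crit}}$) needs a short argument with closure containments that the paper leaves implicit. You instead extract the open-set-level observation established inside the preceding lemma's proof (for any open $U\ni N$, either $N_{\text{gen}}\in U$, $N_{\text{crit}}\in U$, or $U$ isolates $N$ in its closure), note that the isolation alternative is excluded by hypothesis, substitute using $N_{\text{gen}}=M_{\text{gen}}$, $N_{\text{crit}}=M_{\text{crit}}$, and finish via $M_{\text{gen}},M_{\text{crit}}\in\cl\{M\}$. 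That directly yields topological indistinguishability of $N$ and $M$ with no case-split on generic versus critical, and then the Section 1 proposition gives elementary equivalence. So the route is a genuine (if small) variant: both rest on the same lemma, but you use the internal claim of its proof rather than its statement, which makes the corollary self-contained and avoids the implicit bookkeeping in the paper's one-liner. One small point worth stating explicitly if you write this up: the observation you quote is proved in the lemma under a running assumption that $N$ is isolated in its closure, but as you correctly note the argument for that particular claim never uses that assumption, so it holds in general.
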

\begin{proof}
This is true because if $M\in\Zg_R$ is not isolated in its closure then $M\equiv M_{\text{gen}}$ or $M\equiv M_{\text{crit}}$.
\end{proof}

\begin{definition}
Let $N\in\Zg_R$.
\begin{enumerate}
\item $\Ass N:=\{r\in R \st mr=0 \text{ for some } 0\neq m\in N\}$
\item $\Div N:=\{r\in R \st r \text{ does not divide } m \text{ for some } m \in N\}$
\item $\ann_RN:=\{r\in R \st mr=0 \text{  for all }m\in N\}$
\end{enumerate}
\end{definition}

A right ideal $P$ is \textbf{completely prime} if $xy\in P$ implies $x\in P$ or $y\in P$.

Note that for any $N\in\Zg_R$, $\Ass N$ and $\Div N$ are completely prime two-sided ideals and $\ann_RN$ is a two-sided ideal. Further, note that $rs\in \ann_RN$ implies $r\in \ann_R N$ or $s\in \Ass N$, and, $s\in \ann_RN$ or $r\in \Div N$.

\begin{lemma}
The generic type of $N\in \Zg_R$ is $\langle\ann_R N,\Div N\rangle$ and the critical type of $N\in\Zg_R$ is $\langle\Ass N,\ann_RN\rangle$.
\end{lemma}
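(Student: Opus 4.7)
The plan is to exploit the bijection given in \cite[11.8]{Genaserialrings} between consistent $1$-pairs $\langle I,J\rangle$ (with $1$ being the unique primitive idempotent of the uniserial ring $R$) and irreducible pp-$1$-types containing $x=x$. Both $\wp^N_{\text{gen}}$ and $\wp^N_{\text{crit}}$ are realised in the indecomposable pure-injective module $N$, hence are irreducible, and both trivially contain $x=x$; so each corresponds to a unique consistent $1$-pair, which I will read off from the definitions.

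First I will record how to recover $\langle I,J\rangle$ from the type $p$ it determines, namely
\[I=\{r\in R:(xr=0)\in p\}\qquad\text{and}\qquad J=\{s\in R:(s\mid x)\notin p\}.\]
The implications $s\notin J\Rightarrow s\mid x\in p$ and $r\in I\Rightarrow xr=0\in p$ are immediate from the positive part of $J^*/I$. For the reverse direction on $J$, note that $1\notin I$ (since $N\neq 0$), so the negative formula $\neg(s\cdot 1\mid x\cdot 1)=\neg(s\mid x)$ lies in $J^*/I$ for every $s\in J$, giving $s\in J\Rightarrow s\mid x\notin p$. For the reverse direction on $I$, pick any $s\in J$ (such an $s$ will exist because $0$ belongs to each of $\Div N$ and $\ann_R N$ whenever $N\neq 0$); then $\neg(sr\mid xr)\in J^*/I$ for any $r\notin I$, yet $xr=0$ trivially implies $sr\mid xr$, so $xr=0\in p$ would force $r\in I$.

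Second, I will compute the two pairs directly. For the generic type $\wp^N_{\text{gen}}=\{\phi:\phi(N)=N\}$: $s\mid x\in\wp^N_{\text{gen}}$ iff $Ns=N$ iff $s\notin\Div N$, giving $J=\Div N$; and $xr=0\in\wp^N_{\text{gen}}$ iff $Nr=0$ iff $r\in\ann_R N$, giving $I=\ann_R N$. For the critical type $\wp^N_{\text{crit}}=\{\phi:\phi(N)\neq 0\}$: analogously $s\mid x\in\wp^N_{\text{crit}}$ iff $Ns\neq 0$ iff $s\notin\ann_R N$, giving $J=\ann_R N$; and $xr=0\in\wp^N_{\text{crit}}$ iff some nonzero $m\in N$ satisfies $mr=0$, iff $r\in\Ass N$, giving $I=\Ass N$. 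These are exactly the two pairs in the statement.

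The only potentially subtle point is the inversion step recovering $\langle I,J\rangle$ from an irreducible type, and the need to verify the reverse inclusions using the negative formulas of $J^*/I$. Beyond that, the argument is bookkeeping matching the definitions of $\Ass$, $\Div$, and $\ann_R$ with the left/right sides of the pair; no deeper structural input (such as the consistency criterion \ref{Genasconsistencycondition} or Corollary \ref{euniserial}) is required beyond the cited bijection.
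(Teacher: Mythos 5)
Your proof is correct and follows essentially the same approach as the paper's (identify which divisibility and annihilator formulas belong to the generic/critical type and read off the $e$-pair), with the welcome addition that you spell out the inversion step that recovers $\langle I,J\rangle$ from the type $p$, which the paper elides. One small inaccuracy in the setup: $\wp^N_{\text{gen}}$ and $\wp^N_{\text{crit}}$ are generally \emph{not} realised in $N$ itself (take $R=\Z_{(p)}$ and $N=\Z_{p^\infty}$: every element is torsion, so no element has the generic pp-type, which is realised instead in $\Q$). The conclusion you want --- irreducibility --- is still true, and is established earlier in the paper (these are $\wp^C_{1-\text{gen}}$ and $\wp^C_{1-\text{crit}}$ for $C=\cl N$, or one can check Ziegler's criterion directly using the fact that the pp-definable subgroups of the indecomposable pure-injective $N$ are totally ordered by inclusion), so the rest of your argument stands unaffected.
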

\begin{proof}
By definition $a|x<x=x$ in $N$ if and only if $a\in\Div N$ and $xb=0$ is equivalent to $x=x$ in $N$ if and only if $b\in \ann_RN$. By definition $xa=0>x=0$ in $N$ if and only if $a\in \Ass N$ and $b|x$ is equivalent to $x=0$ if and only if $b\in\ann_RN$.
\end{proof}

\begin{lemma}\label{desccrit}
Let $P$ be a completely prime two-sided ideal and $K$ a two-sided ideal of $R$. If
\begin{enumerate}
\item $rs\in K$ implies $r\in K$ or $s\in P$
\end{enumerate}
then $\langle P,K\rangle$ is consistent and $\langle P,K\rangle$ is the critical type of $N(P,K)$.
\end{lemma}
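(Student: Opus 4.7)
My plan has three parts: consistency of $\langle P,K\rangle$, the easy half of identifying $\langle P,K\rangle$ with the critical type (via properties of a realiser of $K^*/P$), and the hard equality $\ann_R N(P,K)=K$ (by a quotient-ring reduction). Throughout I shall use that hypothesis (1), applied with $r=1$ and $s\in K$ together with the fact that $K$ is proper, forces $K\subseteq P$.

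For consistency I would verify criterion \ref{Genasconsistencycondition}(3). Given $r\in P$, $r^*\notin P$, $s\in K$, $s^*\notin K$, the product $sr$ lies in $K$ because $K$ is two-sided, so $RsrR\subseteq K$. If $s^*r^*$ were to lie in $RsrR$, hypothesis (1) would force $s^*\in K$ or $r^*\in P$, each contradicting the choices, so $\langle P,K\rangle$ is consistent.

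Next let $a\in N(P,K)$ realise $K^*/P$. The clauses $\neg(0\cdot r\mid xr)$ for $r\notin P$ force $ar\neq 0$, so $\ann(a)=P$ exactly; the clauses $\neg(sr\mid xr)$ for $s\in K$ and $r\notin P$ rule out $s\mid a$ for $s\in K$, so the divisors of $a$ are exactly $R\setminus K$. These already yield $P\subseteq\Ass N(P,K)$ and $\ann_R N(P,K)\subseteq K$, since an $s$ that annihilates all of $N(P,K)$ cannot divide the nonzero $a$, so $s\in K$. For the inclusion $\Ass N(P,K)\subseteq P$, I would take $0\neq m\in N(P,K)$ killed by some $r\notin P$; the pp-type of $m$ has the form $J_q^*/I_q$ with $I_q=\ann(m)\ni r$ and hull $N(P,K)$. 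By \ref{equonepairs}, either $I_q=uP$ for some $u\neq 0$ (so $I_q\subseteq P$, contradicting $r\in I_q\setminus P$), or $vI_q=P$ for some $v\neq 0$. In the latter case, $vr\in P$ combined with $r\notin P$ and primality of $P$ force $v\in P\subseteq\J(R)$, and writing $v=vi_0$ with $i_0\in I_q$ gives $v(1-i_0)=0$; since $R$ is local, $i_0\in\J(R)$ makes $1-i_0$ a unit and forces $v=0$, while $i_0\notin\J(R)$ makes $i_0$ a unit and forces $I_q=R$ and $m=0$, each a contradiction.

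I expect the main obstacle to be the reverse inclusion $K\subseteq\ann_R N(P,K)$, and my plan is to reduce to the case $K=0$ by passing to the quotient ring $\bar R:=R/K$. Because $K$ is two-sided, $\bar R$ is again uniserial and $\bar P:=P/K$ is completely prime, and hypothesis (1) for $\langle P,K\rangle$ translates verbatim to hypothesis (1) for $\langle\bar P,0\rangle$ over $\bar R$. The inclusion $0\subseteq\ann_{\bar R}$ is trivial when $K=0$, so the earlier paragraphs applied over $\bar R$ produce an indecomposable pure-injective $\bar R$-module $M:=N_{\bar R}(\bar P,0)$ with $\ann_{\bar R}M=0$ and $\Ass_{\bar R}M=\bar P$. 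Viewed through the quotient $R\to\bar R$, the module $M$ is $K$-annihilated, and it remains indecomposable and pure-injective over $R$ by the standard change-of-rings principle for pp-formulae on $K$-annihilated modules. A routine translation of the clauses of $K^*/P$ shows that a realiser $\bar a\in M$ of the $\bar R$-type also realises the $R$-type $K^*/P$ (divisibility $s\mid x$ for $s\notin K$ becomes $\bar s\mid\bar x$ for $\bar s\neq 0$; annihilation $xr=0$ for $r\in P$ becomes $\bar x\bar r=0$ for $\bar r\in\bar P$; each $\neg(sr\mid xr)$ for $s\in K$ reduces, via $\overline{sr}=0$, to $\bar x\bar r\neq 0$, which holds for $r\notin P$). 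By uniqueness of pure-injective hulls, $N(P,K)\cong M$ as $R$-modules, from which $\ann_R N(P,K)=K$ and $\Ass_R N(P,K)=P$ follow.
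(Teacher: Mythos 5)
Your argument is correct, and it differs from the paper's chiefly in how the reverse inclusion $K\subseteq\ann_R N(P,K)$ is obtained. The consistency check and the proofs that $\Ass N(P,K)=P$ and $\ann_R N(P,K)\subseteq K$ run parallel to the paper's (both rest on \ref{Genasconsistencycondition} and \ref{equonepairs}; your case analysis for $\Ass\subseteq P$ is a little more roundabout, extracting a contradiction from $v=vi_0$ with $i_0\in I_q$, where the paper rules out $u\in P$ directly from the chain $uI\subseteq u\J(R)\subsetneq uR\subseteq P$, but both are fine). For $K\subseteq\ann_R N(P,K)$, however, you pass to $\bar R=R/K$, note that over $\bar R$ the pair $\langle\bar P,0\rangle$ makes this inclusion vacuous, and transport $N_{\bar R}(\bar P,0)$ back to $R$ via the closed embedding $\Zg_{\bar R}\hookrightarrow\Zg_R$ (this change-of-rings fact, \cite[5.5.3]{PSL}, is used in the paper, but only later, in the proof of \ref{00andJJclosed}), checking that a realiser of $0^*/\bar P$ over $\bar R$ also realises $K^*/P$ over $R$. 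The paper instead works entirely inside $R$: for any pair $\langle I,J\rangle$ realised in $N(P,K)$, \ref{equonepairs} gives either $uI=P$ and $J=Ku$ (then $a\in K\subseteq P\subseteq Ru$ together with hypothesis (1) puts $a\in Ku=J$) or $I=uP$ and $Ju=K$ (then $au=ju$ for some $j\in J$ and \ref{Genasconsistencycondition}(2) forces $a-j\in J$); since $a\in J$ for every such pair means $a$ divides no nonzero element, $a\in\ann_R N(P,K)$. Your quotient-ring reduction is conceptually cleaner and makes the $K=0$ case the essential content; the paper's direct computation is more elementary in staying within the $\langle I,J\rangle$-calculus that drives the rest of the section and avoids any appeal to ring epimorphisms.
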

\begin{proof}
Suppose that $c\in K$ and $b\in P$. Then $RcbR\subseteq K$ since $K$ is two-sided. Thus by (1), $rs\in RcbR$ implies $r\in K$ or $s\in P$. Therefore, by \ref{Genasconsistencycondition}, $\langle P,K\rangle$ is consistent.

We first show that $\Ass(N(P,K))=P$.

If $I$ is a right ideal, $u\neq 0$ and $uI=P$ then, since $P$ is completely prime, either $u\in P$ or $P\supseteq I$. If $u\in P$ then $uI\subseteq u\J(R)\subsetneq uR\subseteq P$. Thus $P\supseteq I$.
So $uI=P$ implies $u\notin P$ and $P\supseteq I$. If $I$ is a right ideal and $I=uP$ then $I=uP\subseteq P$ since $P$ is two-sided. Therefore, by \ref{equonepairs}, $\Ass(N(P,K))=P$.

We now show that $a\in\ann_RN(P,K)$ if and only if $a\in K$. Note that if $a\in\ann_RN(P,K)$ then $a$ does not divide any element of $N(P,K)$. Thus $K\supseteq \ann_RN(P,K)$.

First suppose that $\langle I,J\rangle$ is a consistent pair, $uI=P$ and $J=Ku$. We have shown above that this implies that $u\notin P$.

If $a\in K$ then $a\in K\subseteq P\subseteq Ru$. Thus $a=\lambda u$ for some $\lambda\in R$. By condition (1) this implies $\lambda\in K$. So $a\in Ku=J$.

Now suppose that $\langle I,J\rangle$ is a consistent pair, $I=uP$ and $Ju=K$. Note that $u\notin I$ since if it were then $uP\subsetneq uR\subseteq I$. If $a\in K$ then $au\in K$ since $K$ is two-sided. Thus there exists $j\in J$ such that $au=ju$. Thus $(a-j)u=0$ and by \ref{Genasconsistencycondition}, either $a-j\in J$ or $u\in I$. Since $u\notin I$, $a-j\in J$ and hence $a\in J$.

Thus, by \ref{equonepairs}, $a\in K$ implies $a\in \ann_RN(P,K)$. So $\ann_RN(P,K)=K$.

\end{proof}

\begin{lemma}\label{descgen}
Let $P$ be a completely prime two-sided ideal and $K$ a two-sided ideal of $R$. If
\begin{enumerate}
\item $sr\in K$ implies $r\in K$ or $s\in P$
\end{enumerate}
then $\langle K,P\rangle$ is consistent and $\langle K,P\rangle$ is the generic type of $N(K,P)$.
\end{lemma}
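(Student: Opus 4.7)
The plan is to mirror the proof of Lemma \ref{desccrit}, swapping the roles of the right-ideal and left-ideal components of the pair. Three items need to be established: consistency of $\langle K, P \rangle$, the identity $\Div N(K, P) = P$, and the identity $\ann_R N(K, P) = K$. I note first that setting $r = 1$ in the hypothesis forces $K \subseteq P$ (since $K$ is proper), a fact used throughout.

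For consistency, I would verify condition (3) of \ref{Genasconsistencycondition}. Given $r \in K$ and $s \in P$, two-sidedness of $K$ gives $sr \in K$ and $RsrR \subseteq K$; for $r^* \notin K$ and $s^* \notin P$, the hypothesis forces $s^* r^* \notin K$, hence $s^* r^* \notin RsrR$. For $\Div N(K, P) = P$, the containment $P \subseteq \Div N$ is immediate from the generic realisation of $\langle K, P \rangle$. For $\Div N \subseteq P$, using $\Div N = \bigcup J'$ over equivalent pairs, I would check $J' \subseteq P$ in each case of \ref{equonepairs}: case (1) gives $J' = Pu \subseteq P$ by two-sidedness of $P$; case (2) gives $P = J'v$, where I would first rule out $v \in P$ via the chain $P = J'v \subseteq \J(R)v \subsetneq Rv \subseteq P$ (using properness of $J'$ and $v \neq 0$), and then complete primeness of $P$ forces each $j \in J'$ to lie in $P$.

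The main obstacle is $\ann_R N(K, P) = K$. The containment $\ann_R N \subseteq K$ follows from the generic element being killed by $\ann_R N$. For the reverse $K \subseteq \ann_R N$, I use that $\ann_R N$ equals $\bigcap J'$ over equivalent pairs (both $\bigcap I'$ and $\bigcap J'$ describe $\{r : Nr = 0\}$), reducing the task to proving $K \subseteq J'$ in each case of \ref{equonepairs}. In case (1), with $K = uI'$ and $J' = Pu$: I would first show $u \notin K$ via the Nakayama chain $K = uI' \subseteq u\J(R) \subsetneq uR$, which would contradict $uR \subseteq K$ if $u \in K$; uniseriality of the left-ideal lattice then yields $K \subseteq Ru$, so each $a \in K$ factors as $a = \lambda u$; applying the hypothesis to $\lambda u \in K$ gives $\lambda \in P$, hence $a \in Pu = J'$. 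In case (2), with $vK = I'$ and $P = J'v$: I would first note $v \notin I'$ (if $v \in vK$, then $v(1-k) = 0$ for some $k \in K \subseteq \J(R)$, with $1-k$ a unit, forcing $v = 0$, a contradiction); for $a \in K$, two-sidedness yields $av \in K \subseteq P = J'v$, so $av = j_0 v$ with $j_0 \in J'$ and $(a - j_0)v = 0$; applying \ref{Genasconsistencycondition}(3) to $\langle I', J' \rangle$ with $r = s = 0$, $s^* = a - j_0$ and $r^* = v$ yields $a - j_0 \in J'$ (since $v \notin I'$), hence $a \in J'$.
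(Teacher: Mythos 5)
Your proof is correct and follows the second route the paper itself proposes, namely proving the statement ``as the previous lemma making appropriate modifications,'' i.e.\ by mirroring the proof of \ref{desccrit} with left/right and $I/J$ swapped; the paper's primary stated route is Herzog's duality, but it explicitly endorses this direct approach as an alternative without writing it out. Your details (deducing $K\subseteq P$ from the hypothesis with $r=1$, verifying consistency via \ref{Genasconsistencycondition}(3), computing $\Div$ through $\bigcup J'$ and $\ann_R$ through $\bigcap J'$, and the two \ref{equonepairs} cases) all check out.
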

\begin{proof}
The proof of this lemma follows from the previous lemma using Herzog's duality \cite{herzogduality}. Alternatively, it can be proved as the previous lemma making appropriate modifications.
\end{proof}

Now that we have good descriptions of the generic and critical types of modules in $\Zg_R$, we investigate those $N\in \Zg_R$ which are not elementary equivalent to $N_{\text{gen}}$ or $N_{\text{crit}}$.

\begin{definition}
Let $P$ be a non-zero completely prime two-sided ideal and suppose that $b\in P$ is non-zero. Let
\[I_b^P:=\{c\in R \st b\notin cP\}\] and
\[{_b}{^P}I:=\{c\in R \st b\notin Pc\}.\]
\end{definition}

Note that $I_b^P$ is a right ideal and ${_b}{^P}I$ is a left ideal.

\begin{lemma}\label{Nminpair}
Let $N\in\Zg_R$. Suppose that $xb=0/a|x$ is an $N$-minimal pair such that $a,b\notin\ann_RN$. Then $N=N(I_b^P,{_a}{^Q}I)$ where $P:=\Ass N$ and $Q:=\Div N$.
\end{lemma}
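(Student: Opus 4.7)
The strategy is to exhibit an explicit witness $m \in N$ for the opened pair and identify its pp-type with $({_a^Q I})^*/I_b^P$; uniqueness of the irreducible extension of an $e$-pair to an irreducible pp-type then yields $N \cong N(I_b^P, {_a^Q I})$.

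Choose $m \in N$ with $mb = 0$ and $m \notin Na$, which exists since the pair is opened by $N$. The strict containment $(a|x)(N) \subsetneq (xb=0)(N)$ gives $ab \in \ann_R N$; together with $a,b \notin \ann_R N$ this yields $b \in \Ass N = P$ (take any $m_0$ with $m_0 a \neq 0$; then $(m_0 a)b = 0$) and $a \in \Div N = Q$ (since $Na \subsetneq N$). By Corollary \ref{irredclosedsetchainserial} (applied with $e = 1$), the pp-definable subgroups of $N$ form a chain, and combined with Lemma \ref{descpp1flaoverserial} every pp-formula is equivalent in the theory of $N$ to one of the form $s|x$ or $xr=0$. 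The $N$-minimality of $xb=0/a|x$ forces each such $\sigma$ to satisfy $\sigma \leq_N a|x$ or $\sigma \geq_N xb=0$, whence $m \in \sigma(N)$ iff $\sigma \geq_N xb=0$. It thus suffices to establish
\[s \mid m \iff a \in Qs, \qquad mr = 0 \iff b \notin rP.\]

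The forward directions go as follows. If $r \in I_b^P$, comparing $rR$ and $bR$ in the uniserial order, either $r = bt$ (so $mr = 0$) or $b = rt$ with $t \notin P$; in the latter case $mr \neq 0$ together with $(mr)t = mb = 0$ would force $t \in \Ass N = P$, a contradiction. If $a = qs$ with $q \in Q$ and yet $s \nmid m$, the minimal-pair dichotomy gives $Ns \subseteq Na = Nqs \subseteq Ns$, hence $Ns = Nqs$ and $N = Nq + \ann_N(s)$; by the chain structure one summand equals $N$, forcing either $q \notin Q$ or $a = qs \in \ann_R N$, both impossible. The converses are dual. If $s \mid m$, left-ideal comparison gives $a = vs$ (the alternative $s \in Ra$ yields $Ns \subseteq Na$, contradicting $m \in Ns \setminus Na$), and $v \in Q$ must hold (else $Nv = N$, hence $Na = Nvs = Ns \ni m$). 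If $mr = 0$ and $b = rp$ with $p \in P$, then $\ann_N(r) \supseteq \ann_N(b)$ combined with the automatic $\ann_N(r) \subseteq \ann_N(b)$ gives equality; comparing $Nr$ and $\ann_N(p)$ in the pp-chain, $Nr \subseteq \ann_N(p)$ forces $b \in \ann_R N$, while $\ann_N(p) \subseteq Nr$ yields some $n_1 \in \ann_N(b) \setminus \ann_N(r)$, both contradictions.

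The negation clauses $\neg(sr \mid xr)$ for $s \in {_a^Q I}$, $r \notin I_b^P$ now follow automatically: $sr \mid mr$ would place $m$ in $Ns + \ann_N(r) = Ns \cup \ann_N(r)$ (by the chain structure), but the two converses give $m \notin Ns$ and $m \notin \ann_N(r)$. Hence $m$ realises $({_a^Q I})^*/I_b^P$; this type is consistent and by the uniqueness clause of the Herzog--Eklof classification of irreducible $e$-types we conclude $N \cong N(I_b^P, {_a^Q I})$. The principal technical difficulty lies in the converses, combining the pp-chain structure of $N$ with complete primeness of $P$ and $Q$ and the two-sided uniserial ideal structure of $R$.
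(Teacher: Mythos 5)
Your proof is correct and follows essentially the same strategy as the paper's: fix a witness $m$ with $mb=0$ and $a\nmid m$, characterise the pp-type of $m$ in terms of $I_b^P$ and ${_a}{^Q}I$ using the chain structure of pp-definable subgroups of $N$ together with $\Ass N=P$ and $\Div N=Q$, and then invoke the Herzog--Eklof uniqueness of the irreducible extension. The only notable stylistic difference is that the paper routes the strictness arguments through the auxiliary modules $N_{\text{crit}}$ and $N_{\text{gen}}$ lying in $\cl N$, whereas you work entirely inside $N$ (using $\ann_N(p)\neq 0$ for $p\in P$ directly), and you spell out explicitly that the negation clauses of $J^*/I$ follow from the two characterisations via the chain structure; both are valid and amount to the same underlying calculation.
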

\begin{proof}
Suppose that $xb=0> xc=0$ in $N$. Then there exists $m\in N$ such that $mb=0$ and $mc\neq 0$. Thus $b=cr$ and $r\in P$. So $c\notin I_b^P$.

 Suppose that $c\notin I_b^P$. Then $b=cr$ for some $r\in P$. Since $b\notin \ann_RN$, $c\notin \ann_RN$. Thus $m\in N_{\text{crit}}$ realising the critical type on $N$ is divisible by $c$ and $mr=0$. Take $n\in N_{\text{crit}}$ such that $m=nc$. Then $nc\neq 0$ and $ncr=0$. So $xb=0>xc=0$ in $N_{\text{crit}}$ and hence also in $N$. Thus $c\in I_b^P$ if and only if $xc=0\geq xb=0$.

A similar argument shows that $a|x\geq c|x$ if and only if $c\in {_a}{^Q}I$.

Suppose that $m\in N$ is such that $mb=0$ and $a$ does not divide $m$. Then by the above, $m$ has pp-type $\langle I_b^P,{_a}{^Q}I\rangle$ and thus $N=N(I_b^P,{_a}{^Q}I)$.
\end{proof}

\begin{proposition}\label{pointsuptotopind}
All points in $Zg_R$ are topologically indistinguishable from an indecomposable pure-injective of one of the following forms:
\begin{enumerate}
\item $N(P,K)$ where $K$ is a two-sided ideal, $P$ is a completely prime two-sided ideal and $rs\in K$ implies $r\in K$ or $s\in P$.
\item $N(K,P)$ where $K$ is a two-sided ideal, $P$ is a completely prime two-sided ideal and $sr\in K$ implies $r\in K$ or $s\in P$.
\item $N(I_b^P,{_a}{^Q}I)$ where $\langle I_b^P,{_a}{^Q}I\rangle$ is consistent.
\end{enumerate}
\end{proposition}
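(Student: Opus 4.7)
The plan is to start from Corollary \ref{mainclosureclaim} specialised to the uniserial setting: every $N\in\Zg_R$ either has a minimal pair in $[x=0,x=x]$, is elementarily equivalent to $N_{\text{gen}}$, or is elementarily equivalent to $N_{\text{crit}}$. Since elementarily equivalent indecomposable pure-injectives have the same Ziegler closure, and hence are topologically indistinguishable, the last two alternatives reduce to checking that $N_{\text{gen}}$ has form (2) and $N_{\text{crit}}$ has form (1). For this I would invoke Lemma \ref{descgen} with $P = \Div N$, $K = \ann_R N$ (using the fact, observed right after the definition of $\Div$/$\Ass$/$\ann_R$, that $sr \in \ann_R N$ forces $r \in \ann_R N$ or $s \in \Div N$), and dually Lemma \ref{desccrit} with $P = \Ass N$, $K = \ann_R N$.

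The substantive case is when $N$ has a minimal pair $\phi/\psi$. By Corollary \ref{irredclosedsetchainserial} the restriction of the pp-1-lattice to $N$ is totally ordered, and by Lemma \ref{descpp1flaoverserial} every pp-1-formula is a finite sum of formulas $s|x \wedge xt=0$; taking $N$-minimal representatives, we may therefore assume each of $\phi,\psi$ is of the form $c|x$ or $xd=0$ (the meet of a divisibility and an annihilator formula collapses to the smaller of the two in a totally ordered restriction). This leaves four combinatorial sub-cases.

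In the sub-case $\phi = c|x$, Lemma \ref{uniserialringsgenandcrit} yields $N_{\text{gen}} \in (\phi/\psi)$. By the lemma on Ziegler closures over uniserial rings proved just above we have $N_{\text{gen}}\in \cl\{N\}$, and Lemma \ref{open} says a minimal pair has a unique opening point in any irreducible closed set containing it; hence $N = N_{\text{gen}}$, giving form (2). Dually, when $\psi = xd=0$ we get $N = N_{\text{crit}}$ in form (1). The only remaining sub-case is $\phi = xb=0$ and $\psi = a|x$. If $a \in \ann_R N$ then $\psi(N)=\{0\}$, so the pair is $N$-equivalent to $xb=0/x=0$ and is handled by the previous bullet; if $b \in \ann_R N$ then $\phi(N)=N$, and the pair becomes $1|x/a|x$, handled by the first bullet. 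Otherwise $a,b\notin \ann_R N$; observing that then $b\in \Ass N$ and $a\in \Div N$ are nonzero (so the defining data for $I_b^{P}$ and ${_a}{^Q}I$ are present), Lemma \ref{Nminpair} produces $N = N(I_b^{\Ass N},{_a}{^{\Div N}}I)$, which is exactly form (3).

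The main obstacle I anticipate is the minimal-pair sub-case $\phi = xb=0$, $\psi = a|x$ in which Lemma \ref{Nminpair} fails to apply directly because $a$ or $b$ lies in $\ann_R N$; the key observation that resolves this is that in each such degenerate configuration one end of the pair collapses to $x=x$ or $x=0$, so Lemma \ref{uniserialringsgenandcrit} applies instead and drags $N_{\text{gen}}$ or $N_{\text{crit}}$ into the open set, forcing $N$ to coincide with that module.
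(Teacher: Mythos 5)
Your proof is correct and rests on the same ingredients as the paper's (Lemmas \ref{uniserialringsgenandcrit}, \ref{descgen}, \ref{desccrit}, \ref{Nminpair}), but it organises the case split differently. The paper begins by supposing $N$ is topologically distinguishable from $N_{\text{gen}}$ and $N_{\text{crit}}$, picks a distinguishing pair, and uses \ref{uniserialringsgenandcrit} directly to force that pair into the shape $xb=0/a|x$ with $a,b\notin\ann_R N$; minimality of that pair is then automatic (any intermediate $\sigma$ would be $N$-equivalent to $r|x$ or $xs=0$ and, again by \ref{uniserialringsgenandcrit}, would drag $N_{\text{gen}}$ or $N_{\text{crit}}$ into the open set), so \ref{Nminpair} applies at once. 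You instead start from the trichotomy of Corollary \ref{mainclosureclaim}, and this leaves you the extra job of showing that the minimal-pair sub-cases $\phi=c|x$, $\psi=xd=0$, and the degenerate $a\in\ann_R N$ or $b\in\ann_R N$ configurations all collapse to $N=N_{\text{gen}}$ or $N=N_{\text{crit}}$; you handle this correctly via Lemma \ref{open} and the observation that $N_{\text{gen}},N_{\text{crit}}\in\cl N$. The two routes are logically equivalent, but the paper's is a touch shorter because assuming topological distinguishability at the outset eliminates the extra sub-cases automatically, while yours is somewhat more explicit about why the surviving minimal pair must have the annihilator-over-divisibility form with both sides outside $\ann_R N$.
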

\begin{proof}
If $N$ is topologically indistinguishable from $N_{\text{crit}}$ or $N_{\text{gen}}$ then, by \ref{desccrit} and \ref{descgen}, $N$ is topologically indistinguishable from a module either from (1) or (2).

Suppose that $N$ is topologically distinguishable from $N_{\text{crit}}$ and $N_{\text{gen}}$. Then there exists a pair of pp-1-formulae such that $N\in\left(\phi/\psi\right)$ and $N_{\text{crit}},N_{\text{gen}}\notin \left(\phi/\psi\right)$. By \ref{uniserialringsgenandcrit}, we may assume that $\phi$ is $xb=0$ for some $b\in R$ and $\psi$ is $a|x$ for some $a\in R$. Moreover, $N_{\text{gen}}\notin \left(xb=0/a|x\right)$ implies $b\notin \ann_RN$ and $N_{\text{crit}}\notin\left(xb=0/a|x\right)$ implies $a\notin \ann_RN$.

Thus \ref{Nminpair} implies that $N\cong N(I_b^P,{_a}{^Q}I)$ where $P=\Ass N$ and $Q=\Div N$.

\end{proof}

Unfortunately, we are unable to find a simple characterisation of consistency of pairs of the form $\langle I_b^P,{_a}{^Q}I\rangle$. However, we will see in section \ref{Sexamples} that this is not always a big problem when computing Ziegler spectra of uniserial rings.

The following lemma shows that the critical and generic type of an indecomposable pure-injective almost determines it up to topological indistinguishability.

\begin{lemma}\label{critgenalmostdet}
Suppose $N,M\in\Zg_R$, $N_{\text{crit}}=M_{\text{crit}}$, $N_{\text{gen}}=M_{\text{gen}}$ and $M,N\notin \cl\{N_{\text{crit}},N_{\text{gen}}\}$. Then $N=M$.
\end{lemma}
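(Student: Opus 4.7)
The strategy is to follow the argument from the proof of Proposition~\ref{pointsuptotopind}. Since $N\notin\cl\{N_{\text{crit}},N_{\text{gen}}\}$, that proof yields $a,b\in R\setminus\ann_R N$ for which the pair $(xb=0/a|x)$ is opened by $N$ but not by $N_{\text{crit}}$ or $N_{\text{gen}}$, is $N$-minimal, and identifies $N\cong N(I_b^P,{_a}{^Q}I)$ via Lemma~\ref{Nminpair}, where $P=\Ass N$ and $Q=\Div N$. From $N_{\text{crit}}=M_{\text{crit}}$ and $N_{\text{gen}}=M_{\text{gen}}$, together with the computations in Lemmas~\ref{desccrit} and~\ref{descgen}, I extract $\Ass M=P$, $\Div M=Q$ and $\ann_R M=\ann_R N=:K$; in particular $a,b\notin\ann_R M$. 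It will then suffice to show the pair $(xb=0/a|x)$ is opened by $M$ and is $M$-minimal, because Lemma~\ref{Nminpair} applied to $M$ will give $M\cong N(I_b^P,{_a}{^Q}I)\cong N$.

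For the opening, the chain structure of pp-definable subgroups of $M$ (Lemma~\ref{euniserial}) leaves three cases for how $(xb=0)(M)$ and $(a|x)(M)$ compare. The strict inequality $(xb=0)(M)\subsetneq (a|x)(M)$ is ruled out by Lemma~\ref{uniserialringsgenandcrit}: it would force $M$ to open $(a|x/a|x\wedge xb=0)$, whence $M_{\text{gen}}=N_{\text{gen}}$ would open this pair by the divisibility-top clause of Lemma~\ref{uniserialringsgenandcrit}, and since $N_{\text{gen}}\in\cl N$ the pair would then be opened by $N$, contradicting $(a|x)(N)\subsetneq (xb=0)(N)$. For the equality case $(xb=0)(M)=(a|x)(M)$ I plan to invoke an $M$-distinguishing minimal pair $(xb'=0/a'|x)$ produced by Proposition~\ref{pointsuptotopind}: the symmetric form of the previous Lemma~\ref{uniserialringsgenandcrit} argument (using now $N_{\text{gen}}\in\cl M$) rules out $(xb'=0)(N)\subsetneq (a'|x)(N)$, and by coordinating the two distinguishing pairs together with the $N$- and $M$-minimality clauses Lemma~\ref{Nminpair} produces a common $N(I,J)$-description of both modules.

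For the $M$-minimality of $(xb=0/a|x)$, any $\sigma$ with $(a|x)(M)\subsetneq \sigma(M)\subsetneq (xb=0)(M)$ is, by Lemma~\ref{descpp1flaoverserial} and the chain structure in $M$, $M$-equivalent either to a divisibility formula $s|x$ or to an annihilator formula $xt=0$; in either case Lemma~\ref{uniserialringsgenandcrit} together with $N_{\text{gen}},N_{\text{crit}}\in\cl N$ transports a correspondingly strictly intermediate level back to the chain of $N$, contradicting the $N$-minimality of $(xb=0/a|x)$. The principal technical obstacle throughout is the equality case in the opening argument, which requires a careful interplay between the distinguishing minimal pairs of $N$ and of $M$ and a double use of the closures $\cl N$ and $\cl M$; the remaining steps are direct applications of Lemma~\ref{uniserialringsgenandcrit}.
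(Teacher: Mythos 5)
Your overall plan---show that the same pair $(xb=0/a|x)$ that witnesses $N$ is also opened by $M$ and $M$-minimal, then apply Lemma~\ref{Nminpair} to both---is a reasonable first attempt, and your reduction via $\Ass$, $\Div$, $\ann_R$ to the common invariants $P,Q,K$ is exactly right. But there are two real gaps, and they sit exactly where the paper has to do genuine work.

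First, the equality case $(xb=0)(M)=(a|x)(M)$ is not resolved. You reduce to it correctly (the reversed strict inclusion is impossible since $ab\in\ann_R M$), but then the sentence ``by coordinating the two distinguishing pairs together with the $N$- and $M$-minimality clauses Lemma~\ref{Nminpair} produces a common $N(I,J)$-description'' is a statement of the conclusion, not an argument. The difficulty is that $M$'s own distinguishing pair $(xd=0/c|x)$ need not have $d$ and $b$ generating comparable formulas in the direction you want, and without relating $b$ and $d$ there is no obvious way to transport information between the two pairs. Second, in the $M$-minimality step, Lemma~\ref{uniserialringsgenandcrit} lets you move a strict inclusion of the form $(\cdot\,|x)(M)\subsetneq(s|x)(M)$ or $(xt=0)(M)\subsetneq(\cdot)(M)$ to $N_{\text{gen}}$ or $N_{\text{crit}}$ (and thence to $N$ via $N_{\text{gen}},N_{\text{crit}}\in\cl N$), but it does not let you transport the \emph{other} half of the chain, e.g. $(s|x)(M)\subsetneq(xb=0)(M)$, back to $N$: that pair has divisibility on the bottom and annihilation on the top, so neither clause of Lemma~\ref{uniserialringsgenandcrit} applies. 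So you cannot conclude that $N$ opens $xb=0/s|x$, and the contradiction with $N$-minimality does not follow.

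The paper resolves both problems with a single device that your sketch is missing: after choosing distinguishing $N$-minimal and $M$-minimal pairs $xb=0/a|x$ and $xd=0/c|x$, it uses uniseriality of $R$ to write, WLOG, $b=td$, multiplies the witnessing element $n\in N$ by $t$ on the right, and replaces $N$'s minimal pair by the equivalent $N$-minimal pair $xd=0/at|x$ (the intervals $[a|x,xb=0]$ and $[at|x,xd=0]$ in $N$ are isomorphic). This aligns the annihilator formula $xd=0$ of the two pairs, so only the divisibility sides $at|x$ and $c|x$ need comparing, and that comparison is exactly what Lemma~\ref{uniserialringsgenandcrit} plus the hypothesis that $(xd=0/c|x)$ avoids $M_{\text{gen}}$ and $M_{\text{crit}}$ can handle. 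Without some alignment step of this kind, the ``equality'' and ``$M$-minimality'' branches of your argument are stuck.
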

\begin{proof}
Since $N$ and $M$ are not in the closure of $N_{\text{gen}}=M_{\text{gen}}$ or $N_{\text{crit}}=M_{\text{crit}}$, $N$ has a minimal pair $xb=0/a|x$ and $M$ has a minimal pair $xd=0/c|x$. Since $N_\text{crit}=M_{\text{crit}}$, $\Ass N=\Ass M=:P$. Thus $b,d\in P$. Since $N_{\text{gen}}=M_{\text{gen}}$, $\Div N=\Div M=:Q$. Thus $a,c\in Q$. Finally, $a,b\notin \ann_RN=\ann_RM$ and $c,d\notin \ann_R M=\ann_RN$.

Let $n\in N$ be such that $nb=0$ and $a$ does not divide $n$. So, by \ref{Nminpair},  $n$ has type $\langle I_b^P,{_a}{^Q}I\rangle$.

We may assume without loss of generality that $b=td$. Since $d\in P$, $t\notin I_b^P$ and hence $nt\neq 0$.

Thus $ntd=0$ and $at$ does not divide $nt$. Moreover $xd=0/at|x$ is an $N$-minimal pair since the interval $[at|x,xd=0]$ in $N$ is isomorphic to the interval $[a|x,xb=0]$ in $N$. Hence, $xd=0/at|x$  isolates $N$ in its closure.


Since $\ann_RN=\ann_RM$ and $xd=0>at|x$ in $N$, $atd\in\ann_R N=\ann_RM$ and therefore $xd=0\geq at|x$ in $M$.

Thus either $xd=0$ is equivalent to $at|x$ in $M$ and hence $N_{\text{gen}}=M_{\text{gen}}\in \left(at|x/c|x\right)$ and hence $N_{\text{gen}}=M_{\text{gen}}\in \left(xd=0/c|x\right)$ or $xd=0>at|x$ in $M$. The first possibility can't happen since $M_{\text{gen}}\notin \left(xd=0/c|x\right)$. Thus $xd=0>at|x$ in $M$. But if $xd=0/at|x$ is not a minimal pair for $M$ then $N_{\text{gen}}=M_{\text{gen}}\in \left(xd=0/at|x\right)$ or $N_{\text{crit}}=M_{\text{crit}}\in\left(xd=0/at|x\right)$. Both of which contradict our assumptions, thus $xd=0/at|x$ is a minimal pair for $M$ and $at|x$ is equivalent to $c|x$ in $M$. Thus, $at\notin \ann_RN=\ann_RM$.

Thus both $N$ and $M$ realise $\langle I_d^P,{_at}{^Q}I\rangle$ by \ref{Nminpair}. So $N=M$.

\end{proof}

\begin{lemma}\label{subbasis}
The sets $\left(x=x/xa=0\right)$, $\left(x=x/a|x\right)$, $\left(xa=0/x=0\right)$, $\left(xb=0/a|x\right)$ where $a,b\in R$ is a sub-basis for $\Zg_R$. Moreover,
\begin{enumerate}
\item $N\in \left(x=x/xa=0\right)$ if and only if $a\notin \ann_R N$
\item $N\in \left(xa=0/x=0\right)$ if and only if $a\in\Ass N$
\item $N\in\left(x=x/a|x\right)$ if and only if $a\in\Div N$
\end{enumerate}
\end{lemma}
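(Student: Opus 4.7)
The three equivalences in the ``moreover'' part follow by directly unwinding the definition: $N\in(\phi/\psi)$ iff some element of $N$ satisfies $\phi$ but not $\psi$. This gives precisely $a\notin\ann_R N$, $a\in\Ass N$, and $a\in\Div N$ for the three listed pairs.

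For the sub-basis claim, my plan is to show that any basic open set $(\phi/\psi)$ is a finite union of finite intersections of sets from the four listed families. First I will apply Lemma~\ref{descpp1flaoverserial} (with $e=1$) to write $\phi$ and $\psi$ each as a finite sum of ``atomic'' formulas of the form $s|x\wedge xt=0$. The chain structure on the pp-definable subgroups of each indecomposable pure-injective $N$ (Lemma~\ref{euniserial}) gives the identities $(\sum_i\phi_i/\psi)=\bigcup_i(\phi_i/\psi)$ and $(\phi/\sum_j\psi_j)=\bigcap_j(\phi/\psi_j)$, reducing the problem to pairs of atoms. The same chain argument applied now to the meet $(s|x\wedge xt=0)(N)=\min(Ns,(xt=0)(N))$ yields the Boolean decomposition
\[
(s|x\wedge xt=0/s'|x\wedge xt'=0)=[(s|x/s'|x)\cap(xt=0/s'|x)]\cup[(s|x/xt'=0)\cap(xt=0/xt'=0)],
\]
reducing further to ``half-atom'' pairs.

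It then remains to express each half-atom pair in terms of the four families. The pair $(xt=0/s'|x)$ is already of type (d). For $(s|x/xt'=0)$, Goursat's theorem (as in the proof of Lemma~\ref{div}) applied with $\rho(x,y)=(x=ys)$ identifies it with $(y=y/yst'=0)$, a set of type (a). For $(s|x/s'|x)$, since $R$ is uniserial its left ideals form a chain, so I may write $s'=rs$; a direct computation then gives $(s|x/rs|x)=(x=x/r|x)\cap(x=x/xs=0)$, types (b) and (a). Dually, using the chain of right ideals, $(xt=0/xt'=0)$ forces $t=t'r$, and one computes $(xt=0/xt'=0)=(x=x/xt'=0)\cap(xr=0/x=0)$, types (a) and (c).

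The main obstacle will be the bookkeeping: keeping straight the distinction between left and right ideals of $R$ when determining when $s'|x\leq s|x$ (via left ideals) versus $xt'=0\leq xt=0$ (via right ideals), and verifying that the chain identities ``sum of pp-sets equals the maximum'' and ``meet equals the minimum'', valid pointwise on each indecomposable pure-injective, lift to genuine set-theoretic equalities of open subsets of $\Zg_R$.
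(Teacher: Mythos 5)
Your proof is correct, and the broad strategy — reduce to pairs of ``half-atoms'' ($s|x$ versus $s'|x$, $xt=0$ versus $xt'=0$, and the mixed pairs), using total-orderedness of the lattice of pp-definable subgroups of each indecomposable pure-injective — matches the paper's. The route in is different, though: the paper invokes Reynders' basis \cite[2.1]{Reynders}, whose basic open sets already have the form $\left(xa=0\wedge b|x/xc=0+d|x\right)$, and total-orderedness then gives a clean four-way \emph{intersection} $\bigcap_{i,j}(\phi_i/\psi_j)$ in one step. You instead start from the Eklof–Herzog description \ref{descpp1flaoverserial}, which has sums of atoms on both sides, and therefore need the extra reduction steps $(\sum_i\phi_i/\psi)=\bigcup_i(\phi_i/\psi)$, $(\phi/\sum_j\psi_j)=\bigcap_j(\phi/\psi_j)$, and the union-of-intersections decomposition of an atom pair — all correct, but slightly heavier bookkeeping. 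Your half-atom identities agree with the paper's (for $(s|x/xt'=0)$ the Goursat appeal is overkill: the equality $(b|x/xc=0)=(x=x/xbc=0)$ is immediate, as the paper notes). In short: same destination, a slightly longer but self-contained path, trading the external Reynders citation for the preliminaries already stated in the paper plus some extra lattice manipulations.
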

\begin{proof}
By \cite[2.1]{Reynders}, the sets
\[\left(xa=0\wedge b|x/xc=0 +d|x\right)\] where $a,b,c,d\in R$ are a basis for $\Zg_R$. Since the lattice of pp-definable subsets of any indecomposable pure-injective module is totally ordered by inclusion,
\[\left(xa=0\wedge b|x/xc=0 +d|x\right)=\]\[\left(xa=0/xc=0\right)\cap\left(xa=0/d|x\right)\cap\left(b|x/xc=0\right)\cap\left(b|x/d|x\right).\]

If $\left(xa=0/xc=0\right)$ is non-empty then $a=cr$ for some $r\in R$ and
\[\left(xa=0/xc=0\right)=\left(xr=0/x=0\right)\cap \left(x=x/xc=0\right).\]
If $\left(b|x/d|x\right)$ is non-empty then $d=rb$ for some $r\in R$ and
\[\left(b|x/d|x\right)=\left(x=x/r|x\right)\cap\left(b|x/x=0\right)=\left(x=x/r|x\right)\cap\left(x=x/xb=0\right).\]
Finally
\[\left(b|x/xc=0\right)=\left(x=x/xbc=0\right).\]

\end{proof}

\begin{cor}\label{closure}
If $N_1$ is in the closure of $N_2$ then $\Ass N_2\supseteq \Ass N_1$, $\Div N_2\supseteq \Div N_1$ and $\ann_RN_1\supseteq \ann_RN_2$.
\end{cor}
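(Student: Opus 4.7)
The plan is to unwind the topological condition $N_1\in \cl\{N_2\}$ and apply the explicit identification of the relevant sub-basic open sets in Lemma \ref{subbasis}. Recall that $N_1\in\cl\{N_2\}$ is equivalent to the statement that every open neighbourhood of $N_1$ also contains $N_2$. So each of the three inclusions reduces to picking the correct sub-basic open from Lemma \ref{subbasis} and reading off the implication.

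For $\Ass N_1\subseteq \Ass N_2$, first I would take $a\in\Ass N_1$. By Lemma \ref{subbasis}(2), this is equivalent to $N_1\in\left(xa=0/x=0\right)$. Since $N_1\in\cl\{N_2\}$, this open set also contains $N_2$, and applying Lemma \ref{subbasis}(2) again gives $a\in\Ass N_2$. The argument for $\Div N_1\subseteq \Div N_2$ is identical, using Lemma \ref{subbasis}(3) and the open set $\left(x=x/a|x\right)$ in place of $\left(xa=0/x=0\right)$.

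For $\ann_R N_2\subseteq\ann_R N_1$, I would argue contrapositively. If $a\notin\ann_R N_1$ then by Lemma \ref{subbasis}(1), $N_1\in\left(x=x/xa=0\right)$, hence $N_2\in\left(x=x/xa=0\right)$, i.e.\ $a\notin\ann_R N_2$. Taking contrapositives yields $\ann_R N_2\subseteq\ann_R N_1$.

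There is no real obstacle here: the content of the corollary sits entirely in Lemma \ref{subbasis}, which has already identified the three families of sub-basic opens that detect membership in $\Ass$, $\Div$, and the complement of $\ann_R$. The corollary is then a purely formal consequence of the definition of topological closure.
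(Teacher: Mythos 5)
Your proof is correct and is essentially the argument the paper has in mind: the corollary is stated without a written proof precisely because it follows immediately from Lemma \ref{subbasis} via the definition of closure, which is exactly what you have written out.
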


\begin{question}
When is $\langle I_b^P,{_a}{^Q}I\rangle$ consistent? When is $\langle I_b^P,{_a}{^Q}I\rangle$ topologically distinguishable from both its generic and critical modules? What is the annihilator of $\langle I_b^P,{_a}{^Q}I\rangle$?
\end{question}

\section{Rank one uniserial domains}\label{Sexamples}

In this section we describe the Ziegler spectra, after factoring out by $T_0$, of all rank one uniserial domains $R$ according to the classification in \cite[1.9]{B-D}. A uniserial domain is rank one if its only non-zero completely prime ideal is $\J(R)$.

The classification in \cite[1.9]{B-D} is in terms of $P$-ideals where $P$ is a rank one cone of a group $G$ but we will make the straightforward translation into ideals of a rank one uniserial domains. Before giving the classification of rank one uniserial domains, we need to introduce the groupoid of divisorial ideals. We say that a right ideal $I$ is divisorial if
$I=\cap_{I\subseteq aR}aR=:\widehat{I}$. The ideal $\widehat{I}$ is called the divisorial closure of $I$. Clearly all principal right ideals are divisorial. Moreover, it is shown in \cite[pg 2737]{B-D} that a right ideal is not divisorial if and only if $\J(R)$ is not principal as a right ideal and there exists $z\in R$ such that $I=z\J(R)$ and $\widehat{I}=zR$. Moreover, in the case where $R$ is rank one and $I$ is an ideal, $I=z\J(R)=\J(R)z$ and $\widehat{I}=zR=Rz$.

The set of divisorial ideals becomes a groupoid under the multiplication $\star$ defined by $I_1\star I_2:=\widehat{I_1I_2}$.

According to \cite[1.9]{B-D}, every rank one uniserial domain $R$ has exactly one of the following properties.
\begin{enumerate}
\item $R$ is \textbf{invariant}, that is $aR=Ra$ for all $a\in R$.
\item $R$ is \textbf{nearly simple}, that is $R$ is not a division ring and $\J(R)$ is the only non-zero two-sided ideal.
\item $R$ is \textbf{exceptional}, that is, there exists a prime ideal $Q$ which is not completely prime. In this case,
\begin{enumerate}
\item there are no ideals between $\J(R)$ and $Q$,
\item the ideal $Q$ is divisorial and generates the groupoid of divisorial ideals,
\item $\cap_{n=1}^\infty Q^n=0$ and
\item there exists a $k\in \N_0$ such that $\widehat{Q^k}$ is principal and generates the groupoid of principal ideals. In this situation we say that $R$ is type $C_k$.
\end{enumerate}
\end{enumerate}

Importantly to us, the above classification gives the following descriptions of the chains of ideals.

If $R$ is of type $C_0$ then the chain of ideals is
\[R\supseteq \J(R)\supseteq Q\supseteq Q^2\supseteq \ldots.\] In this case $R$ has no ideals finitely generated as right ideals.

If $R$ is of type $C_1$ then the chain of ideals is
\[R\supseteq \J(R)\supseteq Q=zR\supseteq z\J(R)\supseteq Q^2=z^2R\supseteq z^2\J(R) \ldots.\]

If $R$ is of type $C_k$ for $k\geq 2$ then the chain of ideals is
\[R\supseteq \J(R)\supseteq Q \supseteq Q^2\supseteq \ldots\supseteq Q^{k-1} \supseteq zR\supseteq Q^k=z\J(R)\supseteq Q^{k+1}\ldots \]
\[\ldots Q^{2k-1}\supseteq z^2R\supseteq Q^{2k}=z^2\J(R)\supseteq Q^{2k+1}\ldots \]

Commutative valuation domains are examples of invariant rank one uniserial domains. Examples of nearly simple uniserial domains can be found in \cite[section 6.5]{rightchainrings} and \cite[3.8]{DubPun}. Finally, examples of exceptional uniserial rings of type $C_n$ for each $n\in\N$ are given in \cite{B-D}.

Before we go on, we sketch a proof of the following comforting fact, which we were unable to find a direct reference for.

\begin{lemma}\label{idealfgasleftideal}
If $R$ is a rank one uniserial domain, $a\neq 0$ and $aR$ is an ideal then $aR=Ra$.
\end{lemma}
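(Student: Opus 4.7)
My plan is to treat the three cases of the Brungs--Dubrovin classification separately. The invariant case is immediate by definition. In the nearly simple case, the only non-zero proper two-sided ideal is $\J(R)$, so if $aR$ is a two-sided ideal and $a\neq 0$, either $aR = R$ (in which case $a$ is a unit, so $Ra = R = aR$) or $aR = \J(R)$. I would rule out the latter: since $aR$ is two-sided, $R\cdot aR \subseteq aR$, and hence $aR\cdot aR \subseteq a\cdot aR = a^2R$; so $\J(R)^2 \subseteq a^2R$. As $\J(R)^2$ is a two-sided ideal and cannot be $R$, it is either $0$ or $\J(R)$. If $\J(R)^2 = 0$ then $a^2 \in\J(R)^2=0$, contradicting the fact that $R$ is a domain. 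If $\J(R)^2 = \J(R)$ then $a \in \J(R)\subseteq a^2R$, so $a = a^2 r$ for some $r$; cancellation in the domain gives $ar=1$, forcing $a$ to be a unit in the local ring $R$, contradicting $a\in \J(R)$.

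In the exceptional case I would rely on two facts already recorded in the classification recalled in the text. First, if $I$ is a non-divisorial ideal then $I = z\J(R) = \J(R)z$ with $\widehat{I} = zR = Rz$; iterating on the chain of two-sided ideals gives $z^n R = Rz^n$ for every $n\geq 0$. Second, $\widehat{Q^k} = zR$ generates the groupoid of principal two-sided ideals, so every principal two-sided ideal has the form $z^n R$ for some $n\geq 0$. Given $a\neq 0$ with $aR = z^n R$, the inclusions $a\in z^nR$ and $z^n\in aR$ together with cancellation in the domain and the locality of $R$ show that $a = z^n u$ for some unit $u\in R$. Then
\[Ra = Rz^n u = Rz^n = z^nR = aR,\]
where the identity $Rz^n u = Rz^n$ uses $u\in R^{\times}$ together with $Rz^n = z^n R$: writing $z^n u^{-1} = sz^n$ via $z^nR = Rz^n$, one checks that $rz^n = (rs)(z^n u)$ for every $r\in R$, which gives $Rz^n \subseteq Rz^n u$, while the reverse inclusion is immediate.

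The main point of the argument, and the only non-trivial obstacle, is the appeal in the exceptional case to the fact that every principal two-sided ideal lies in the sub-groupoid generated by $\widehat{Q^k}$, so is of the form $z^n R$. Once this is invoked, the identity $Ra = aR$ reduces to the already-recorded symmetry $z^n R = Rz^n$ plus the observation that multiplication by a unit does not change a left ideal contained in a two-sided one. So the proof is really a bookkeeping exercise internal to the Brungs--Dubrovin classification rather than an independent structural statement.
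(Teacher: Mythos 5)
Your proof is correct, but it takes a genuinely different route from the paper's. The paper gives a short, uniform structural argument that avoids the Brungs--Dubrovin trichotomy entirely: for any ideal $I$, the set $\{b\in R \st bI\subsetneq I\}$ is a completely prime ideal (using the fact that a right ideal $K$ is two-sided iff $uK\subseteq K$ for all units $u$); specializing to $I=aR$ with $a$ a non-unit, rank one forces this prime to be $\J(R)$ (it cannot be $0$, since $a^2R=aR$ would make $a$ a unit); then if $ar\notin Ra$ for some $r$, uniseriality gives $\lambda ar=a$ with $\lambda\in\J(R)$, whence $aR\subseteq\lambda aR\subsetneq aR$, a contradiction. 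Your argument instead splits along the classification. The invariant case is tautological, the nearly simple case is a clean exercise in the three-element ideal lattice plus cancellation, and the exceptional case is the load-bearing part: there you invoke $z^nR=Rz^n$ for the distinguished generator $z$ (recalled from \cite{B-D}) and that every principal two-sided ideal lies in the cyclic groupoid generated by $\widehat{Q^k}=zR$, then use cancellation and locality to shift to a unit. This works, but it effectively reduces the general case to a known instance of the same statement supplied by the classification, rather than proving the identity from scratch. You should also be careful to handle type $C_0$ explicitly: there the groupoid of principal two-sided ideals is trivial, so $aR=R$ and the conclusion is immediate, but it doesn't literally go through the $z^nR$ machinery. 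The paper's proof is preferable here precisely because the lemma is then an independent check on the classification rather than a consequence of it; the authors remark they could not find a direct reference for it, so having a proof that does not cite the trichotomy is part of the point.
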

\begin{proof}
If $I$ is an ideal then the following set is a completely prime ideal
\[\{b\in R \st bI\subsetneq I\}.\] It is straightforward to see that it is a right ideal and that it is completely prime. In order to show that it is a left ideal one needs to note, \cite{rightchainrings}, that a right ideal $K$ is a left ideal if and only if $uK\subseteq K$ for all $u\notin \J(R)$.

Thus setting $I=aR$ we have that $\{b\in R \st bI\subsetneq I\}=\J(R)$ since if it were the zero ideal then $a^2R=aR$ which would imply that $a(a\delta-1)=0$ for some $\delta\in R$ and hence $a$ would be a unit.

Now if $Ra\subsetneq aR$ then there exists $r\in R$ such that $ar\notin Ra$. So, since $R$ is uniserial there exists $\lambda\in \J(R)$ such that $\lambda ar=a$. But then $aR\subseteq \lambda aR$ and hence $\lambda\notin \J(R)$. So we have a contradiction.
\end{proof}

\subsection{Nearly simple uniserial domains}

We start by considering some rings with rather small Ziegler spectra.

\begin{remark}\label{finiteideallattfinitezg}
The results in section \ref{pointsforuniserial} imply that if a uniserial ring $R$ has only finitely many two-sided ideals then, after factoring out by $T_0$ its Ziegler spectrum is finite. This is because \ref{descgen} and \ref{desccrit} imply $\Zg_R$ has only finitely many critical and generic pairs, and \ref{critgenalmostdet} shows that if two indecomposable pure-injective modules $N,M$ have $N_{gen}=M_{gen}$, $N_{crit}=M_{crit}$ and both modules are topologically distinguishable from $N_{gen}$ and $N_{crit}$ then $N\cong M$.
\end{remark}

Since the Ziegler spectrum of a uniserial ring with only finitely many two-sided ideals is finite after factoring out by $T_0$, the topology is completely described by the specialisation relation on $\Zg_R/T_0$ ($x$ specialises to $y$ if $y\in\cl\{x\}$) and the $T_0$-equivalence classes.

The above remark in particular implies that all nearly simple uniserial rings have finite Ziegler spectra after factoring out by $T_0$, in fact we will show below that they are all homeomorphic to each other after factoring out by $T_0$. They are however very rarely finite before factoring out by $T_0$. At the end of this subsection, we will exhibit a rather surprising example, due to Dubrovin and Puninski, of a nearly simple uniserial domain with finite Ziegler spectrum even before factoring out by $T_0$.

\begin{lemma}
If $R$ is a nearly simple uniserial domain then all modules are topologically indistinguishable from critical or generic modules. The generic pairs are $\langle \J(R),\J(R) \rangle$, $\langle 0,\J(R)\rangle$ and $\langle 0,0\rangle$ and the critical pairs are $\langle \J(R),\J(R)\rangle$, $\langle \J(R),0\rangle$ and $\langle 0,0\rangle$.
\end{lemma}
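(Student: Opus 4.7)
My plan is to address the two claims in turn. For the enumeration of generic and critical pairs, since the two-sided ideals of $R$ are only $\{0,\J(R),R\}$, the completely prime two-sided proper ideals are $0$ and $\J(R)$, so there are four candidate pairs $\langle K,P\rangle$ with $K,P\in\{0,\J(R)\}$. I would check each against the conditions in Lemmas \ref{descgen} and \ref{desccrit}. The symmetric pairs $\langle 0,0\rangle$ and $\langle \J(R),\J(R)\rangle$ satisfy both conditions trivially (by the domain property and the complete primality of $\J(R)$ respectively). The pair $\langle 0,\J(R)\rangle$ satisfies the generic condition trivially but fails the critical one, as witnessed by $r=1$ and any $s\in\J(R)\setminus\{0\}$; symmetrically, $\langle\J(R),0\rangle$ is critical but not generic. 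This gives the lists in the statement.

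For the topological indistinguishability claim, I would invoke Proposition \ref{pointsuptotopind}: every $N\in\Zg_R$ is topologically indistinguishable from a generic module, a critical module, or a module of the form $N(I_b^P,{_a}{^Q}I)$ with $a,b\notin\ann_R N$, $P=\Ass N$, $Q=\Div N$. The plan is to rule out the third case. Since $\ann_R N$ is two-sided and hence lies in $\{0,\J(R)\}$, if $\ann_R N=\J(R)$ then $a,b$ are units, the pair $xb=0/a|x$ becomes $x=0/x=x$, which is unopenable in any nonzero module; so $\ann_R N=0$. A parallel argument shows that $\Ass N=0$ or $\Div N=0$ also forces the pair to be unopenable (in the former case $xb=0$ collapses to $x=0$ in $N$; in the latter $a|x$ collapses to $x=x$ in $N$). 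Hence $P=Q=\J(R)$, and a direct calculation using that the right ideals of $R$ form a chain (so that $b\in c\J(R)$ iff $bR\subsetneq cR$) gives $I_b^{\J(R)}=bR$ and symmetrically ${_a}{^{\J(R)}}I=Ra$.

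The hard part will be showing $\langle bR,Ra\rangle$ is inconsistent for $a,b\in\J(R)\setminus\{0\}$; the main obstacle is establishing that $\J(R)$ is not principal. For this I would first note that $\J(R)^2$ is a two-sided ideal, contained in $\J(R)$ and nonzero (since $R$ is a domain), so $\J(R)^2=\J(R)$ in the nearly simple case. On the other hand, if $\J(R)=zR$ for some $z$, then Lemma \ref{idealfgasleftideal} gives $\J(R)=Rz$, and a short computation using $Rz=zR$ yields $\J(R)^2=z^2R$. Combining, $z^2R=zR$ forces $z(1-zr)=0$ for some $r$, whence the domain property gives $zr=1$, contradicting $z\in\J(R)$. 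Hence $\J(R)$ is not principal as a right ideal, and by a parallel application of \ref{idealfgasleftideal} not as a left ideal either. Therefore $bR\subsetneq\J(R)$ and $Ra\subsetneq\J(R)$; choosing $r^*\in\J(R)\setminus bR$ and $s^*\in\J(R)\setminus Ra$, and setting $r=b$, $s=a$, we obtain $s^*r^*\in\J(R)^2=\J(R)=RabR=RsrR$, violating condition (3) of Lemma \ref{Genasconsistencycondition}. This rules out case (3) and completes the proof.
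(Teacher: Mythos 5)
Your proof is correct, but it takes a noticeably longer route than necessary on the topological-indistinguishability claim. The enumeration of generic and critical pairs matches the paper's approach (both apply \ref{descgen}/\ref{desccrit}, using that the two-sided proper ideals are exactly $0$ and $\J(R)$ and that $\J(R)$ is completely prime).

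For ruling out case (3) of \ref{pointsuptotopind}, the paper observes that once you know $\ann_R N=0$, the argument closes immediately: if $N\in(xb=0/a|x)$ with $a,b\notin\ann_R N$, then $a,b\neq 0$, but $a|x\leq xb=0$ in $N$ forces $ab\in\ann_R N=0$, so $ab=0$, contradicting that $R$ is a domain. You instead push all the way to the explicit form: you compute $I_b^{\J(R)}=bR$ and ${_a}{^{\J(R)}}I=Ra$, and then check the consistency criterion \ref{Genasconsistencycondition}(3) directly, which requires you to first establish the auxiliary fact that $\J(R)$ is not principal (your argument for this, via $\J(R)^2=\J(R)$ and \ref{idealfgasleftideal}, is correct). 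This is more work, and in fact the pair $\langle bR,Ra\rangle$ being inconsistent is just a reformulation of the $ab\neq 0$ obstruction you already have in hand; but the detour is sound, and the non-principality of $\J(R)$ is a genuinely useful structural fact about nearly simple uniserial domains that the paper uses implicitly elsewhere. So: correct, somewhat redundant, but with an informative payoff.
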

\begin{proof}
Since $\J(R)$ is the unique non-zero two-sided ideal of $R$, by \ref{descgen}, the generic pairs are $\langle \J(R),\J(R) \rangle$, $\langle 0,\J(R)\rangle$ and $\langle 0,0\rangle$ and, by \ref{desccrit}, the critical pairs are $\langle \J(R),\J(R)\rangle$, $\langle \J(R),0\rangle$ and $\langle 0,0\rangle$. If $N$ is a module with $\ann_RN=0$ then $N\in \left(xb=0/a|x\right)$ implies $ab=0$. Thus $a=0$ or $b=0$. Considering the sub-basis from \ref{subbasis}, implies that if $\ann_RN=0$ then $N$ is topologically indistinguishable from its critical module or its generic module. On the other hand, if $\ann_RN=\J(R)$, then $\Ass N=\J(R)$ and $\Div N=\J(R)$. Thus $N=N(\J(R),\J(R))$. Therefore, all points are topologically indistinguishable from either generic or critical modules.
\end{proof}

The points of the Ziegler spectrum were described as modules in \cite{Punnearsim}, we have added descriptions of their critical and generic types. Here, $\PE(M)$ denotes the pure-injective hull of $M$ and $E(M)$ denotes the injective hull of $M$.

\begin{proposition}\label{indpinearsim}
Let $R$ be a nearly simple uniserial domain. The indecomposable pure-injective right modules are the following:
\begin{enumerate}
\item indecomposable injective modules $E(R/I)$ where $I$ is a non-zero right ideal. This module corresponds to the consistent pair $\langle I,0\rangle$ and $E(R/I)\cong E(R/J)$ if and only if there exists $r\in R$ such that $rI=J$ or $I=rJ$. The critical pair of $E(R/I)$ is $\langle \J(R),0\rangle$ and the generic pair is $\langle 0,0\rangle$.
\item indecomposable pure-injective torsion-free modules $\PE(I)$ where $I$ is a non-zero right ideal. This module corresponds to the consistent pair $\langle 0,I\rangle$ and $\PE(I)\cong\PE(J)$ if and only if there exists $r\in R$ such that $rI=J$ or $I=rJ$. The generic pair of $\PE(I)$ is $\langle 0,\J(R)\rangle$ and the critical pair is $\langle 0,0\rangle$.
\item $R/\J(R)$ This module corresponds to the consistent pair $\langle \J(R),\J(R)\rangle$ which is both critical and generic.
\item $E(R_R)$ This module corresponds to the consistent pair $\langle 0,0\rangle$ which is both critical and generic.
\end{enumerate}

\end{proposition}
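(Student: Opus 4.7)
The plan is to enumerate all consistent pairs $\langle I,J\rangle$ over the nearly simple uniserial domain $R$ via Lemma \ref{Genasconsistencycondition}, identify the corresponding indecomposable pure-injective $N(I,J)$ in each case, determine isomorphism via \ref{equonepairs}, and compute the critical/generic pairs directly.

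For consistency, the two-sided ideals of $R$ are $0$, $\J(R)$, $R$, so for non-zero $s\in J\subseteq\J(R)$ and $r\in I\subseteq\J(R)$, $sr\neq 0$ (domain) and hence $RsrR=\J(R)$. Condition \ref{Genasconsistencycondition}(3) then requires $s^*r^*\notin\J(R)$ for all $s^*\notin J$ and $r^*\notin I$, forcing both to be units. If $I\subsetneq\J(R)$ is non-zero we can pick $r^*\in\J(R)\setminus I$ (non-unit), violating the condition; similarly for $J$. Hence the only consistent pair with $I,J$ both non-zero is $\langle\J(R),\J(R)\rangle$, and combining with the vacuous cases ($I=0$ or $J=0$), the complete list of consistent pairs is $\langle I,0\rangle$ for $I$ a right ideal, $\langle 0,J\rangle$ for $J$ a left ideal, and $\langle\J(R),\J(R)\rangle$, corresponding respectively to cases (1) and (4), (2) and (4), and (3).

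Next I would identify each $N(I,J)$ concretely. For $\langle I,0\rangle$ with $I\neq 0$ the pp-type forces an element annihilated by $I$ and divisible by every non-zero $s\in R$; the element $1+I\in E(R/I)$ has annihilator $I$, and divisibility by $s$ follows by extending the map $sR\to R/I$, $sr\mapsto r+I$ (well-defined as $R$ is a domain) along $sR\hookrightarrow R$ using injectivity of $E(R/I)$. Uniformity of $R/I$ (from $R$ uniserial) gives indecomposability, whence $N(I,0)=E(R/I)$. Case (2) is dual, using that $I\subseteq R$ is torsion-free and uniform so that $\PE(I)$ is indecomposable pure-injective and a generator of $I$ realises $\langle 0,I\rangle$. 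Case (3) is immediate, since $R/\J(R)$ is simple and its generator is annihilated by $\J(R)$ and divisible only by units. For (4), $1\in R\subseteq E(R_R)$ has zero annihilator and, by the same extension argument, is divisible by every non-zero $s$, with $R_R$ uniform giving indecomposability. The isomorphism statements in (1) and (2) then fall out of \ref{equonepairs}: when $J_1=J_2=0$ (respectively $I_1=I_2=0$) the conditions $J_1u=J_2$ and $J_1=J_2v$ are automatic, leaving only $I_1=uI_2$ or $vI_1=I_2$.

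For the generic and critical pairs, I would compute $\Ass$, $\Div$ and $\ann_R$ directly: for $E(R/I)$ with $I\neq 0$, the generator has annihilator $I\neq 0$, so $\Ass E(R/I)\supseteq I$ is a non-zero completely prime two-sided ideal and hence equals $\J(R)$; injectivity yields $\Div E(R/I)=0$; and the vanishing of $\ann_RE(R/I)$ must be shown separately. Case (2) is dual, while (3) and (4) are immediate from the descriptions above. The main obstacle is establishing $\ann_RE(R/I)=0$ and $\ann_R\PE(I)=0$: one must rule out $\J(R)$ annihilating the hull even though it annihilates specific elements, which uses the divisibility of $E(R/I)$ together with the domain property of $R$. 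This same computation is what distinguishes $E(R/\J(R))$ in family (1) from the simple module $R/\J(R)$ in (3).
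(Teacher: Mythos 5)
The paper supplies no proof of this proposition: the module-theoretic classification is cited from prior work and only the critical/generic data are new, so your blind attempt does more work than the paper itself. Your enumeration of consistent pairs is correct and cleanly argued from Lemma \ref{Genasconsistencycondition}, using that $0$, $\J(R)$, $R$ are the only two-sided ideals and that $R$ is a domain; the identification $N(I,0)\cong E(R/I)$ via the element $1+I$ together with the extension-along-injectivity argument for divisibility is sound; and the $\Ass$/$\Div$/$\ann_R$ computations are correct in outline.

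The genuine gap is in case (2). You assert that ``a generator of $I$ realises $\langle 0,I\rangle$''. This is false: if $I=aR$ then $I\hookrightarrow\PE(I)$ is pure, and inside $I$ the only elements dividing $a$ are the units, so the pp-type of $a$ is $\langle 0,\J(R)\rangle$ rather than $\langle 0,aR\rangle$. Indeed $\langle 0,I\rangle$ is only a well-formed pair when the second component is a \emph{left} ideal, whereas the $I$ in $\PE(I)$ is a right ideal; compare Lemma \ref{near-zig}, which records $\PE(\J(R))$ against a pair built from the left ideal $Rr$, not from the right ideal $\J(R)$. The map sending a nonzero right ideal $I$ to the left ideal $J$ with $\PE(I)\cong N(0,J)$ is the substantive content of case (2), it is not the identity, and it depends on more than the isomorphism class of $I$; your ``dual'' argument skips exactly this. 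In consequence you also have not shown that every consistent pair $\langle 0,J\rangle$ is realised by some $\PE(I)$, so the proposed matching between your enumeration of pairs and the four families of modules is not established for family (2). (Some of this imprecision is inherited from the statement of the proposition itself, but a proof would still have to sort it out.)
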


\begin{lemma}\label{topnearsim}
Let $R$ be a nearly simple uniserial domain. After identifying topologically indistinguishable points
\begin{enumerate}
\item $\cl N(\J(R),0)=\{N(0,0), N(\J(R),0)\}$
\item $\cl N(0,\J(R))=\{N(0,0), N(0,\J(R))\}$
\item $\cl N(\J(R),\J(R)) = \{N(\J(R),\J(R))\}$
\item $\cl N(0,0)=\{N(0,0)\}$
\end{enumerate}
\end{lemma}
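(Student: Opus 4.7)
The plan is as follows. By the preceding lemma together with Proposition \ref{indpinearsim}, $\Zg_R/T_0$ has exactly four points, namely $N(0,0)$, $N(\J(R),0)$, $N(0,\J(R))$, and $N(\J(R),\J(R))$. I would determine each of the four closures by sandwiching it between an upper bound coming from \ref{closure} and a lower bound obtained directly from the sub-basis of \ref{subbasis}.

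For the upper bound, I would first tabulate the invariants $\ann_R$, $\Ass$, $\Div$ of each of the four modules, reading them off the critical and generic pairs listed in \ref{indpinearsim}: $N(0,0)$ has all three equal to $0$; $N(\J(R),0)$ has $\Ass = \J(R)$ and $\Div = \ann_R = 0$; $N(0,\J(R))$ has $\Div = \J(R)$ and $\Ass = \ann_R = 0$; and $N(\J(R),\J(R))$ has all three equal to $\J(R)$. Feeding these into the necessary conditions of \ref{closure}, I immediately get: the condition $\Div M = 0$ restricts $\cl N(\J(R),0)$ to $\{N(0,0), N(\J(R),0)\}$; symmetrically $\Ass M = 0$ restricts $\cl N(0,\J(R))$ to $\{N(0,0), N(0,\J(R))\}$; the condition $\ann_R M \supseteq \J(R)$ isolates $N(\J(R),\J(R))$; and $\Ass M = \Div M = 0$ isolates $N(0,0)$. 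Thus $(3)$ and $(4)$ are already complete.

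To finish $(1)$ and $(2)$, it remains to show that $N(0,0)$ lies in both $\cl N(\J(R),0)$ and $\cl N(0,\J(R))$. By \ref{subbasis}, it suffices to check, for each sub-basic open $U$ containing $N(0,0)$, that $U$ also contains the other module. The sub-basic opens of the first kind contain $N(0,0)$ precisely when $a \neq 0$, and they then contain every module of zero annihilator; the opens of the second and third kind never contain $N(0,0)$ because $\Ass N(0,0) = \Div N(0,0) = 0$. The one step requiring care is the fourth sub-basis family $(xb=0/a|x)$, which is not controlled by \ref{closure}. For this I would use that $N(0,0) = E(R_R)$ is both torsion-free (since $R$ is a domain) and divisible: for any nonzero $b$, $xb = 0$ defines $\{0\}$ in $N(0,0)$, while for any nonzero $a$, $a|x$ defines the whole of $N(0,0)$, so $N(0,0)$ opens no such pair with $a$ and $b$ both nonzero; the degenerate cases $a = 0$ or $b = 0$ collapse to sub-basic opens of the first three types already handled. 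The only nontrivial step of the argument is this final check for the fourth sub-basis family.
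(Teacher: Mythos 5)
Your proposal is correct, and the upper bound portion (all four $\subseteq$ inclusions) matches the paper exactly: tabulate $\Ass$, $\Div$, $\ann_R$ and apply Corollary \ref{closure}. Where you diverge is the lower bound, i.e.\ showing $N(0,0)\in\cl N(\J(R),0)$ and $N(0,0)\in\cl N(0,\J(R))$. The paper does this in one line: from Proposition \ref{indpinearsim}, the generic pair of $N(\J(R),0)$ and the critical pair of $N(0,\J(R))$ are both $\langle 0,0\rangle$, and the paper established earlier (after defining $\wp_{e\text{-gen}}$, $\wp_{e\text{-crit}}$) that $N_{\text{gen}}$ and $N_{\text{crit}}$ always lie in the closure of $N$. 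You instead unwind this by hand: run through the sub-basis of \ref{subbasis}, observe that the only sub-basic opens properly containing $N(0,0)$ are of the form $(x=x/xa=0)$ with $a\neq 0$ (the other three families reduce, for $N(0,0)=E(R_R)$ torsion-free divisible, either to the trivial open or to an empty condition), and note that such opens catch every module of zero annihilator. Both arguments are valid; the paper's is shorter because it reuses the general machinery about generic and critical points in a closure, while yours is self-contained and makes the topological content explicit via the sub-basis. The only imprecision worth flagging is cosmetic: writing $\Ass N(0,0)=\Div N(0,0)=0$ elides the degenerate elements $0\in\Ass N$, $0\in\Div N$ that hold for every nonzero $N$, but the corresponding sub-basic opens $(x\cdot 0=0/x=0)$ and $(x=x/0|x)$ are the whole space, so nothing is lost.
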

\begin{proof}
For each of the above equalities the inclusion of the left hand set in the right is implied by \ref{closure}. Since $\langle 0,0\rangle$ is the generic pair of $\langle \J(R),0\rangle$ (respectively critical pair of $\langle 0,\J(R)\rangle$), $N(0,0)$ is in the closure of $N(\J(R),0)$ (respectively $N(0,0)$ is in the closure of $N(0,\J(R))$).
\end{proof}

We will now give an example, taken from \cite[3.8]{DubPun}, of a nearly simple uniserial domain whose Ziegler spectrum is finite even before factoring out by $T_0$.

Let $G$ be the group of affine linear transformations of the real plane, whose elements are linear increasing functions $f= at+ b$, where
$0< a$ and $b$ are reals, and the multiplication is the composition:
\[(at+b)\cdot (ct+d)= a(ct+d)+ b= act+ (ad+b).\] The identity function $t$ is the unit of $G$
and $(at+b)^{-1}= a^{-1} t- a^{-1}b$. The group ring $FG$ over a field $F$ is a left and right Ore domain. This is because $G$ is a semi-direct product of the normal subgroup $N:=\{t+b\st b\in \R\}$ and the subgroup $L:=\{at \st 0<a\in\R\}$ and both these groups are torsion-free and abelian. See also \cite[3.2]{PrihPun} where $\R$ replaced is with $\Q$.


Fix an irrational $\epsilon$ and consider the set $P$ of the functions $f\in G$ such that $f(\epsilon)\geq \epsilon$. Then
$P$ is a \textbf{right cone} on $G$, i.e. $P\cup P^{-1}= G$; further $P\cap P^{-1}$ consists of the functions
$f\in G$ such that $f(\epsilon)= \epsilon$. It follows that the relation $g\leq_l h$ if $g^{-1}h\in P$ defines a
\textbf{left linear quasi-ordering} on $G$, where $g\leq_l h$ if and only if $g(\epsilon)\leq h(\epsilon)$; and this ordering respects
left multiplication by elements of $G$.

Similarly, setting $f\leq_r g$ if $gf^{-1}\in P$ we obtain the \textbf{right linear quasi-ordering} on $G$,
which respects right multiplication by elements of $G$. Note that $f\leq_r g$ if and only if the intercept of $f$ with
the vertical line $y= \epsilon$ is to the right of the corresponding intercept for $g$.

Let $P^+$ be the subsemigroup of $P$ consisting of the functions $f\in P$ such that $f(\epsilon)> \epsilon$. Then $T= FP\setminus FP^+$ is a left and right Ore set in the semigroup ring $FP$. Since $FP$ is a domain and it is fairly clear that $T$ is a multiplicatively closed subset. So we just need to show that $T$, in the terminology of \cite{Lam}, it is left and right permutable. To show this is slightly more complicated than \cite[3.3]{PrihPun} because there exist $g\in G$ such that $g(\epsilon)=\epsilon$ where $g$ is not the identity function. Despite this, the proof still works if instead of rewriting terms in $FG$ as $(1+\sum\alpha_ih_i)\alpha u$ where $u\in G$, $\alpha,\alpha_1,\ldots,\alpha_n\in F$ and all $h_i\in P^+$ we rewrite them as $(\sum_{i=1}^n\alpha_ig_i+\sum_{i=1}^m\beta_ih_i)\alpha u$ where $u\in G$, $\alpha, \alpha_1,\ldots,\alpha_n,\beta_1,\ldots,\beta_m\in F$, all $h_i\in P^+$ and $g_i(\epsilon)=\epsilon$ for all $1\leq i\leq n$.

Finally the localisation $R:=(FP) T^{-1}=T^{-1}(FP)$ is a nearly simple uniserial domain.

Each $r\in R$ can be written as a fraction $(\sum \alpha_i f_i)\cdot u^{-1}$, where $f_i\in P$ and
$u\in FP\backslash FP^+$. By ordering the $f_i$ with respect to $\leq_l$ we see that $rR$ equals $f_iR$ for some $i$. Further,
$f\in gR$ for $f, g\in P$ if and only if $f(\epsilon)\geq g(\epsilon)$, hence the principal right ideals are linearly ordered according
to the values of their generators at $\epsilon$. For instance, $f\in P$ is invertible in $R$ if and only if $f(\epsilon)= \epsilon$.

Similarly each $r\in R$ can be written as a left fraction $u^{-1} \sum \delta_l v_l$ for some
$u\in FP\backslash FP^+$ and $v_l\in P$, hence the left ideal $Rr$ equals $Rv_l$, where $v_l$ is the least element in the
$\leq_r$ ordering. Further, $f\in Rg$ for $f, g\in P$ if and only if $f\geq_r g$, i.e. the principal left ideals of $R$
are linearly ordered by the intercept values of their generators.

That $R$ is nearly simple follows exactly as in \cite[pg52-53]{rightchainrings}.


For our purposes, the most important property of the nearly simple uniserial ring $R$ is that all right ideals of $R$ are of the form $aR$ or $a\J(R)$ and all left ideals of $R$ are of the form $Ra$ or $\J(R)a$. This is because right ideals in $R$ correspond to cuts in the chain of principal right ideals of $R$ and the chain of principal right ideals of $R$ is isomorphic to the reals greater than $\epsilon$ as an order via the map $fR$ for $f\in P$ maps to $f(\epsilon)$. A similar justification for left ideals holds with the value of $f(\epsilon)$ replaced by the value of $x$ such that $f(x)=\epsilon$.

Proposition \ref{indpinearsim} implies that if $0\neq r,s\in R$ then $N(rR,0)\cong N( sR,0)$, $N(r\J(R),0)\cong N(\J(R),0)$, $N(0,Rr)\cong N(0,Rs)$ and $N(0,\J(R)r)\cong N(0,\J(R))$. Thus the following lemma follows from \ref{indpinearsim}.

\begin{lemma}\label{near-zig}(see \cite{Punnearsim})
Let $R$ be the above nearly simple uniserial domain and choose $0\neq r\in \J(R)$. The Ziegler spectrum of $R$
consists of the following 6 points, where we include for each point a corresponding consistent pair.
\begin{enumerate}
\item The simple module $R/\J(R)$ corresponds to the pair $\langle\J(R),\J(R)\rangle$.
\item The injective hull $E(R/\J(R))$ corresponds to the pair $\langle 0,\J(R)\rangle$.
\item The injective hull $E(R/rR)$ corresponds to the pair $\langle 0,rR\rangle$.
\item The pure-injective hull $\PE(R_R)$ corresponds to the pair $\langle\J(R),0\rangle$.
\item The pure-injective hull $\PE(\J(R))$ corresponds to the pair $\langle Rr,0\rangle$.
\item The division ring of fractions $Q(R)$ corresponds to the pair $\langle 0,0\rangle$.
\end{enumerate}
Moreover, $E(R/\J(R))$ and $E(R/rR)$ are topologically indistinguishable from one another, and  $\PE(R_R)$ and $\PE(\J(R))$ are topologically indistinguishable from one another.
\end{lemma}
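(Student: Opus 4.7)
The plan has three stages: enumerate the isomorphism classes of indecomposable pure-injectives, identify the consistent pair attached to each, and deduce the topological indistinguishability by a comparison of generic and critical pairs.

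By Proposition \ref{indpinearsim} every point of $\Zg_R$ lies in one of the families $E(R/I)$, $\PE(I)$ (with $I$ a nonzero right ideal), $R/\J(R)$, or $E(R_R)$. Using the description of the ideals of $R$ noted just before the lemma (every right ideal is $aR$ or $a\J(R)$, and similarly on the left), the classification inside each family collapses. Given nonzero $a,b\in R$, left-uniseriality of $R$ gives (after interchanging if necessary) $b=ta$ for some $t\in R$, whence $t\cdot aR=bR$ and $t\cdot a\J(R)=b\J(R)$; the isomorphism criterion of \ref{indpinearsim}(1) then yields $E(R/aR)\cong E(R/bR)$ and $E(R/a\J(R))\cong E(R/\J(R))$, while the cross-case $E(R/aR)\cong E(R/b\J(R))$ fails because the criterion would force a principal right ideal to equal a non-principal one. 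An entirely analogous reduction for the family $\PE(I)$ yields the two classes $\PE(R_R)$ and $\PE(\J(R))$; together with $R/\J(R)$ and $E(R_R)=Q(R)$ this accounts for the six modules listed. The consistent pairs recorded in the statement are then read off directly from \ref{indpinearsim}, or obtained via \ref{equonepairs} from the $\langle 0,I\rangle$ representations.

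The crucial non-isomorphisms $E(R/rR)\not\cong E(R/\J(R))$ and $\PE(R_R)\not\cong\PE(\J(R))$ both reduce to showing that $\J(R)$ is not principal as a right (or left) ideal: in the explicit construction, the principal right ideals $fR$ are parametrised by $f(\epsilon)\in(\epsilon,\infty)$ and $\J(R)$ is their union, hence corresponds to the open cut at $\epsilon$ and is not principal. Checking the isomorphism criterion of \ref{indpinearsim} then rules out each of the possibilities $sI=J$ and $I=sJ$: equality with a principal ideal would principalise $\J(R)$, while an equation $1=rj$ with $j\in\J(R)$ would force $j$ to have a two-sided inverse in the domain $R$, contradicting $j\in\J(R)$.

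Finally, for the topological indistinguishability statements I will invoke the lemma proved earlier in the subsection that in a nearly simple uniserial domain every indecomposable pure-injective is topologically indistinguishable from either its generic or its critical module. From \ref{indpinearsim} both $E(R/rR)$ and $E(R/\J(R))$ have critical pair $\langle \J(R),0\rangle$ and generic pair $\langle 0,0\rangle$, so their critical module is $E(R/\J(R))$ and their generic module is $Q(R)$; similarly $\PE(R_R)$ and $\PE(\J(R))$ share generic pair $\langle 0,\J(R)\rangle$ and critical pair $\langle 0,0\rangle$, with common generic module $\PE(\J(R))$ and common critical module $Q(R)$. Since topological indistinguishability determines $\Ass N$, $\Div N$ and $\ann_R N$ by the sub-basis \ref{subbasis}, $E(R/rR)$ cannot be topologically indistinguishable from $Q(R)$ (they have different $\Ass$) and so must be topologically indistinguishable from $E(R/\J(R))$; analogously $\PE(R_R)$ is topologically indistinguishable from $\PE(\J(R))$. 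The main technical obstacle is the non-principality argument for $\J(R)$ and the accompanying case-analysis dismissing the isomorphism criterion; once this is in hand, everything else is bookkeeping with the theorems of the preceding sections.
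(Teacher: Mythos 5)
Your proposal is correct and follows essentially the same route as the paper: the paper reduces the classification to Proposition \ref{indpinearsim} together with the observation that all right (resp.\ left) ideals of this particular $R$ have the form $aR$ or $a\J(R)$ (resp.\ $Ra$ or $\J(R)a$), and states the topological indistinguishability by appealing to the earlier lemma that over a nearly simple uniserial domain every point is topologically indistinguishable from its generic or critical module. You fill in the details the paper leaves implicit --- the non-principality of $\J(R)$ (which is what makes the list have exactly six members rather than fewer) and the explicit elimination of $Q(R)$ as the indistinguishable partner via $\Ass$ and $\Div$ --- so this is the same argument, just more fully spelled out.
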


The topology in now described by \ref{topnearsim}.

\subsection{Invariant rank one uniserial domains}

The case of invariant rank one uniserial domains will turn out to be exactly as for commutative rank one uniserial domains i.e. valuation domains, in fact from what we show it will follow that if $R$ is an invariant rank one uniserial domain then there exists a valuation domain $S$ with $\Zg_R$ homeomorphic to $\Zg_S$.

\begin{lemma}
Let $R$ be an invariant uniserial domain. Then all pairs of proper ideals of $R$ are consistent.
\end{lemma}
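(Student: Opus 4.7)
The plan is to verify the consistency criterion of Lemma \ref{Genasconsistencycondition}, namely that for any $r\in I$, $r^*\notin I$, $s\in J$, $s^*\notin J$, we have $s^*r^*\notin RsrR$. The two structural facts I will exploit are, first, that in an invariant ring $Rx=xR$ for every $x$, so $RxR=xR$ and every one-sided principal ideal is already two-sided; and second, that $R$ is uniserial, so its entire ideal lattice is a single chain. Together these will let the whole argument proceed essentially as in the commutative valuation-domain case.

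First I will dispose of the degenerate cases. Since $I$ and $J$ are proper, $s^*,r^*\neq 0$, so $s^*r^*\neq 0$ because $R$ is a domain; if $r=0$ or $s=0$ then $RsrR=0$ and there is nothing to check. Assume then $r,s\neq 0$. Since $R$ is uniserial and $r^*\notin I\ni r$, we must have $rR\subsetneq r^*R$, so $r=r^*a$ for some $a\in R$; and $a\in\J(R)$, for otherwise $rR=r^*R$ would force $r^*\in I$. Symmetrically, using $Rs\subsetneq Rs^*$, we obtain $s=bs^*$ with $b\in\J(R)$. Therefore $sr=bs^*r^*a$. Invariance collapses the two-sided ideal $RsrR$ to the principal right ideal $(sr)R=(bs^*r^*a)R$; and since $bs^*r^*a\in Rs^*r^*a=s^*r^*aR$ (again by invariance), we get
\[RsrR=(bs^*r^*a)R\subseteq s^*r^*aR.\]
It suffices therefore to show $s^*r^*\notin s^*r^*aR$: if $s^*r^*=s^*r^*at$ for some $t\in R$, then cancelling $s^*r^*\neq 0$ in the domain yields $at=1$, contradicting $a\in\J(R)$.

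The only step requiring care is the repeated use of invariance to convert the two-sided ideal $RsrR$ into a principal right ideal and to move between left- and right-sided containments; once that bookkeeping is in place the argument is entirely routine, so I do not expect a serious obstacle.
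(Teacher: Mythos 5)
Your proof is correct and takes essentially the same approach as the paper: verify condition (3) of Lemma~\ref{Genasconsistencycondition} by factoring $r$ through $r^*$ and $s$ through $s^*$ with factors in $\J(R)$, then use invariance to collapse $RsrR$ into a principal right ideal divisible by $s^*r^*$, and cancel in the domain to force a unit in $\J(R)$. The only cosmetic difference is which side of invariance you use to write the factorizations (you take $r=r^*a$, $s=bs^*$; the paper takes $r=\mu r^*$, $s=s^*\lambda$), which changes the bookkeeping but not the argument; your explicit handling of the degenerate $r=0$ or $s=0$ case is a small careful addition that the paper treats implicitly.
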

\begin{proof}
Let $I,J\lhd R$. Suppose that $r\in I,s\in J,r^*\notin I$ and $s^*\notin J$. Since $I$ and $J$ are ideals, there exists $\lambda,\mu\in R$ such that $s=s^*\lambda$ and $r=\mu r^*$. Suppose for a contradiction that $s^*r^*\in RsrR=srR$. Then $r^*\in \lambda rR$ since $R$ is a domain. Again using that $R$ is a domain, $r^*\in \lambda rR=R\lambda r$ implies $1\in R\lambda\mu$ and hence $\mu$ and $\lambda$ are units. But this implies $r^*\in I$ and $s^*\in J$. So we have a contradiction and hence $s^*r^*\notin RsrR=srR$. So by \ref{Genasconsistencycondition}, $\langle I,J\rangle$ is consistent.
\end{proof}

Let $R$ be a uniserial domain with group of units $U(R)$. Then $R$ is Ore. Let $Q$ be the division ring of fractions of $R$. Note that for all $q\in Q^{\times}$, either $q\in R$ or $q^{-1}\in R$ and in fact, any subring of a division ring with this property is a uniserial domain. From now on assume $R$ is invariant. This implies that $U(R)$ is a normal subgroup of $Q^{\times}$. As in the (commutative) valuation domain case, we will call the group $\Gamma_R:=Q^{\times}/U(R)$ the value group of $R$ and let $v:R\rightarrow \Gamma_R\cup\{\infty\}$ be the map which send $0\neq r\in R$ to $rU(R)$ and $0$ to $\infty$. Note that this group has a total order given by $b\leq a$ if $ab^{-1}\in R$, equivalently if $b^{-1}a\in R$.

Ideals in $R$ correspond to upsets in $\Gamma_R\cup\{\infty\}$ via the bijection
\[I\lhd R \mapsto v(I)\]

Now $R$ is rank one if and only if $\Gamma_R$ is archimedean, i.e. for all $0<a,b\in \Gamma_R$ there exists $n\in\N$ such that $b<a^n$. By a theorem of H\"{o}lder, see for instance \cite[2.2.1]{rightorderedgroups}, all archimedean totally order groups are subgroups of $\R$. So in particular $\Gamma_R$ is abelian.

In \cite{Lornasober} the Ziegler spectrum of a valuation domain was described in terms of its value group. We now recall this description and note that it also works for rank one invariant uniserial domains.

If $E$ is a strictly positive upset in $\Gamma\cup\{\infty\}$ and $\gamma\in \Gamma_{\geq 0}$ then let
\[E+\gamma:=\{e+\gamma \st e\in E\}.\] Note that $E+\gamma$ is a strictly positive upset and moreover, if $I$ is an ideal in $R$ such that $v(I)=E$ and $g\in R$ is such that $v(g)=\gamma$ then $v(Ig)=v(gI)=E+\gamma$.

We define an equivalence relation $\approx$ on the set of pairs of strictly positive upsets of $\Gamma_{\geq 0}\cup\{\infty\}$.

If $\langle E,F\rangle, \langle G,H\rangle$ are pairs of strictly positive upsets then $\langle E,F\rangle \approx\langle G,H\rangle$ if one of the following is true
\begin{enumerate}
\item there exists $\gamma\in \Gamma_{\geq 0}$ such that $E=G+\gamma$ and $F+\gamma=H$
\item there exists $\gamma\in \Gamma_{\geq 0}$ such that $E+\gamma=G$ and $F=H+\gamma$.
\end{enumerate}

Note that by \ref{equonepairs}, pairs of ideals $\langle I,J\rangle$ and $\langle K,L\rangle$ are such that $N(I,J)\cong N(K,L)$ if and only if $\langle v(I),v(J)\rangle\approx \langle v(K),v(L)\rangle$.

Write $(E,F)$ for the $\approx$-equivalence class of $\langle E,F\rangle$ and write $Z_R$ for the set of $\approx$-equivalence classes.

For $\alpha,\beta\in \Gamma_{\geq 0}$, $\gamma,\delta\in\Gamma_{>0}\cup\{\infty\}$, let $\mcal{W}_{\alpha,\beta,\gamma,\delta}$ be the set of $\approx$-equivalence classes of pairs of strictly positive upsets $(E,F)$ such that there exists a pair $\langle G,H\rangle$ in the same $\approx$-equivalence class as $\langle E,F\rangle$ with $\alpha\notin G, \beta\notin H,\alpha+\gamma\in G \text{ and } \beta+\delta\in H$.

According to \cite[2.1]{Reynders}, the sets
\[\left(xa=0\wedge b|x/xc=0 +d|x\right)\]
where $a,b,c,d\in R$ are a basis for $\Zg_R$.

One can show, just as in the commutative valuation domain case \cite[2.4]{CBrankVdom}, that such a set is non-empty if and only if $c\neq 0$, $a=cr$ for some $r\in \J(R)$, $b\neq 0$ and $d=sb$ for some $s\in \J(R)$. Moreover $N(I,J)\in \left(xcr=0\wedge b|x/xc=0 +sb|x\right)$ if and only if there exists some consistent pair of  ideal $(K,L)$ such that $N(K,L)\cong N(I,J)$ and $c\notin K$, $b\notin L$, $cr\in K$ and $sb\in L$.

Thus $N(I,J)\in \left(xcr=0\wedge b|x/xc=0 +sb|x\right)$ if and only if $(v(I),v(J))\in \mcal{W}_{v(c),v(b),v(r),v(s)}$.

So exactly as in \cite[3.2]{Lornasober} the set $Z_R$ equipped with a topology by taking the sets $\mcal{W}_{\alpha,\beta,\gamma,\delta}$ as a basis of open sets is homeomorphic to $\Zg_R$.

Thus if $R$ is a rank one invariant uniserial domain then, by \cite[3.8]{MONND}, there exists a valuation domain $S$ with value group $\Gamma_R$ and thus $\Zg_R$ is homeomorphic to $\Zg_S$.

We now give an explicit description of $\Zg_R$ in terms of $\Gamma_R$. All archimedean totally ordered abelian groups are either isomorphic to $\Z$ or are a dense subgroup of $\R$.

In the following two computations we write $\Gamma$ instead of $\Gamma_R$ to simplify notation.

\medskip

\noindent
\underline{$\Gamma\cong \Z$}

In this case $\Zg_R$ is homeomorphic to the Ziegler spectrum of a discrete valuation domain. The Ziegler spectra of discrete valuation domains are very well known, see for instance \cite[section 5.2.1]{PSL}.

Note that all upsets in $\Gamma_{>0}\cup\{\infty\}$ are either of the form $\Gamma_{\geq n}:=\{m\in \Z \st m\geq n\}$ or $\{\infty\}$.

For each $n\in\N$, there is a point, which we will label $n$, corresponding to the pair $\langle \Gamma_{\geq 1},\Gamma_{\geq n}\rangle $, this corresponds to the module $R/J(R)^n$. The remaining points are $ \infty$ corresponding to the equivalence class of the pair $\langle\{\infty\},\{\infty\} \rangle$, $\infty^+ $ corresponding to the equivalence class of the pair $\langle\{\infty\},\Gamma_{\geq 1}\rangle$ and $\infty^- $ corresponding the equivalence class of the pair $\langle\Gamma_{\geq 1},\{\infty\}\rangle$. This is a complete list of $\approx$-equivalence classes.

Either by direct computation or by comparing with the description of the topology for a discrete valuation domain given in \cite{PSL}, we have the following description of the topology.

A subset $C \subseteq\Zg_R$ is closed if and only if the following two properties hold
\begin{enumerate}
\item If $\infty^+\in C$ or $\infty^-\in C$ then $\infty\in C$.
\item If $n\in C$ for infinitely many $n\in\N$ then $\infty,\infty^+,\infty^{-}\in C$.
\end{enumerate}

\medskip

\noindent
\underline{$\Gamma$ is a dense subgroup of $\R$}

Strictly positive upsets in $\Gamma\cup \{\infty\}$ all have one of the following forms
\begin{enumerate}[(i)]
\item $I_{>a}:=\{b\in \Gamma_{\geq 0} \st b>a\}$ where $a\in \R_{\geq 0}$
\item $I_{\geq a}:=\{b\in \Gamma_{\geq 0} \st b\geq a\}$ where $a\in \Gamma_{>0}$
\item $\{\infty\}$
\end{enumerate}

One can check that the following is a lists of representatives of $\approx$-equivalence classes (note that this list contains many more that one representative for each class).
\begin{enumerate}[(i)]
\item $\langle \{\infty\}, I_{>a}\rangle$, $\langle \{\infty\},I_{\geq b}\rangle$ for $a\in \R_{\geq 0}$ and $b\in\Gamma_{>0}$
\item   $\langle I_{>a},\{\infty\}\rangle$, $\langle I_{\geq b}, \{\infty\}\rangle$ for $a\in \R_{\geq 0}$ and $b\in\Gamma_{>0}$
\item $\langle\infty,\infty\rangle$
\item $\langle I_{>a},I_{>b}\rangle$ for $a,b\in \R_{\geq 0}$
\item $ \langle I_{\geq a},I_{\geq b}\rangle$ for $a,b\in \Gamma_{>0}$
\item $\langle I_{> a},I_{\geq b}\rangle$ for $a\in \R_{\geq 0}$ and $b\in \Gamma_{>0}$
\item $\langle I_{\geq a},I_{> b}\rangle$ for $b\in \R_{\geq 0}$ and $a\in \Gamma_{>0}$
\end{enumerate}

We will now consider the space $Z_R$ after identifying topologically indistinguishable points.

Let $\alpha,\beta\in\Gamma_{\geq 0}$ and $\gamma,\delta\in\Gamma_{>0}\cup\{\infty\}$. One can show that each of the points listed under (i) are in $\mcal{W}_{\alpha,\beta,\gamma,\delta}$ if and only if $\gamma=\infty$. Label this point as $\infty^+$. Symmetrically, each of the points listed under (ii) are in $\mcal{W}_{\alpha,\beta,\gamma,\delta}$ if and only if $\delta=\infty$. Label this point as $\infty^-$. The point $(\{\infty\},\{\infty\})$ is in $\mcal{W}_{\alpha,\beta,\gamma,\delta}$ if and only if $\gamma=\delta=\infty$. Label this point $\infty$.

We first state some facts about dense subgroups of $\R$ which will allow us to describe the topology on $Z_R$.

Let $\alpha,\beta\in\Gamma_{\geq 0}$, $\gamma,\delta\in\Gamma_{>0}\cup\{\infty\}$ and $\epsilon_1,\epsilon_2\in \Gamma$.

\begin{enumerate}
\item There exists $\mu\in\Gamma$ such that
\[\alpha\leq \epsilon_1+\mu <\alpha +\gamma \text{ and } \beta \leq \epsilon_2 -\mu <\beta + \delta\]
if and only if
\[\alpha+\beta\leq \epsilon_1+\epsilon_2 <(\alpha +\gamma)+(\beta +\delta).\]
\item There exists $\mu\in\Gamma$ such that
\[\alpha< \epsilon_1+\mu \leq\alpha +\gamma \text{ and } \beta< \epsilon -\mu \leq\beta + \delta\]
if and only if
\[\alpha+\beta< \epsilon_1+\epsilon_2 \leq(\alpha +\gamma)+(\beta +\delta).\]
\item There exists $\mu\in\Gamma$ such that
\[\alpha\leq \epsilon_1+\mu <\alpha +\gamma \text{ and }\beta< \epsilon_2 -\mu \leq\beta + \delta\]
if and only if
\[\alpha+\beta< \epsilon_1+\epsilon_2 <(\alpha +\gamma)+(\beta +\delta).\]
\end{enumerate}



This shows that for $\alpha,\beta\in\Gamma_{\geq 0}$, $\gamma,\delta\in\Gamma_{>0}\cup\{\infty\}$ and $\epsilon_1,\epsilon_2\in \Gamma$
\begin{enumerate}
\item $(I_{>\epsilon_1},I_{>\epsilon_2})\in \mcal{W}_{\alpha,\beta,\gamma,\delta}$ if and only if \[\alpha+\beta\leq \epsilon_1+\epsilon_2 <(\alpha +\gamma)+(\beta +\delta).\] We label this point $(\epsilon_1+\epsilon_2)^+$.
\item $(I_{\geq\epsilon_1},I_{\geq\epsilon_2})\in \mcal{W}_{\alpha,\beta,\gamma,\delta}$ if and only if \[\alpha+\beta< \epsilon_1+\epsilon_2 \leq(\alpha +\gamma)+(\beta +\delta).\] We label this point $(\epsilon_1+\epsilon_2)^-$.
\item $(I_{>\epsilon_1},I_{\geq\epsilon_2})\in \mcal{W}_{\alpha,\beta,\gamma,\delta}$ (respectively $(I_{\geq\epsilon_1},I_{>\epsilon_2})\in \mcal{W}_{\alpha,\beta,\gamma,\delta}$) if and only if
    \[\alpha+\beta< \epsilon_1+\epsilon_2 <(\alpha +\gamma)+(\beta +\delta).\] We label both these points $(\epsilon_1+\epsilon_2)$.
\end{enumerate}

We now consider pairs involving $I_{>\epsilon}$ where $\epsilon\notin\Gamma$.

Let $\alpha,\beta\in\Gamma_{\geq 0}$, $\gamma,\delta\in\Gamma_{>0}\cup\{\infty\}$, $q\in\Gamma$ and $\epsilon_1,\epsilon_2\in \R\backslash\Gamma$.
\begin{enumerate}
\item There exists $\mu\in\Gamma$ such that
\[\alpha\leq \epsilon_1+\mu<\alpha+\gamma \text{ and } \beta\leq \epsilon_2-\mu<\beta+\delta\]
if and only if
\[\alpha+\beta<\epsilon_1+\epsilon_2<(\alpha+\gamma)+(\beta+\delta)\]
\item There exists $\mu\in\Gamma$ such that
\[\alpha\leq \epsilon_1+\mu<\alpha+\gamma \text{ and }\beta\leq q-\mu<\beta+\delta\]
if and only if
\[\alpha+\beta<\epsilon_1+q<\alpha+\gamma+\beta+\delta\]
\item There exists $\mu\in\Gamma$ such that
\[\alpha\leq \epsilon_1+\mu<\alpha+\gamma \text{ and }\beta< q-\mu\leq\beta+\delta\]
if and only if
\[\alpha+\beta<\epsilon_1+q<(\alpha+\gamma)+(\beta+\delta)\]
\end{enumerate}



This shows that for $\alpha,\beta\in\Gamma_{\geq 0}$, $\gamma,\delta\in\Gamma_{>0}\cup\{\infty\}$, $q\in\Gamma$ and $\epsilon_1,\epsilon_2\in \R\backslash\Gamma$
\begin{enumerate}
\item $(I_{>\epsilon_1},I_{>\epsilon_2})\in \mcal{W}_{\alpha,\beta,\gamma,\delta}$ if and only if
\[\alpha+\beta<\epsilon_1+\epsilon_2<(\alpha+\gamma)+(\beta+\delta).\] We label this point $(\epsilon_1+\epsilon_2)$.
\item $(I_{>\epsilon_1},I_{>q}) \in \mcal{W}_{\alpha,\beta,\gamma,\delta}$ if and only if
$(I_{>q},I_{>\epsilon_1}) \in \mcal{W}_{\alpha,\beta,\gamma,\delta}$
\[\alpha+\beta<\epsilon_1+q<\alpha+\gamma+\beta+\delta.\] We label both these points $(\epsilon_1+q)$.
\item $(I_{>\epsilon_1},I_{\geq q}) \in \mcal{W}_{\alpha,\beta,\gamma,\delta}$ if and only if
$(I_{\geq q},I_{>\epsilon_1}) \in \mcal{W}_{\alpha,\beta,\gamma,\delta}$ if and only if
\[\alpha+\beta<\epsilon_1+q<(\alpha+\gamma)+(\beta+\delta)\] We label both these points $(\epsilon_1+q)$.
\end{enumerate}

We have thus now described the topology on $Z_R$. To summarise, if $R$ is an invariant uniserial domain with value group $\Gamma$ which is a dense subgroup of $\R$ then $\Zg_R/T_0$ is homeomorphic to a topological space with set of points
\[\{r \st r\in \R_{>0}\}\cup\{r^+\st r\in \Gamma_{\geq}0\}\cup\{r^-\st r\in\Gamma_{>0}\}\cup\{\infty,\infty^+,\infty^-\}\] and basis of open sets $\mcal{W}_{\alpha,\beta,\gamma,\delta}$ where $\alpha,\beta\in\Gamma_{\geq 0}$ and $\gamma,\delta\in\Gamma_{>0}\cup\{\infty\}$ such that
\begin{enumerate}
\item $\infty\in \mcal{W}_{\alpha,\beta,\gamma,\delta}$ if and only if $\gamma=\delta=\infty$,
\item $\infty^+\in \mcal{W}_{\alpha,\beta,\gamma,\delta}$ if and only if $\gamma=\infty$,
\item $\infty^-\in \mcal{W}_{\alpha,\beta,\gamma,\delta}$ if and only if $\delta=\infty$,
\item $r\in \mcal{W}_{\alpha,\beta,\gamma,\delta}$ if and only if $\alpha+\beta<r<\alpha+\gamma+\beta+\delta$,
\item $r^+\in \mcal{W}_{\alpha,\beta,\gamma,\delta}$ if and only if $\alpha+\beta\leq r<\alpha+\gamma+\beta+\delta$ and
\item $r^-\in \mcal{W}_{\alpha,\beta,\gamma,\delta}$ if and only if $\alpha+\beta<r \leq\alpha+\gamma+\beta+\delta$.
\end{enumerate}

%

The following picture shows a typical open set in $Z_R$ with $\infty^-,\infty,\infty^+$ removed. The oscillating lines show specialisation between points i.e. $\cl r^-=\{r,r^-\}$, $\cl r^+=\{r,r^+\}$ and $r$ is a closed point.

{\centering
\begin{tikzpicture}[scale=0.39]
\draw (2,4) --(13,4);
\draw (16,4) --(27,4);
\draw (9,10) --(20,10);

\draw[fill] (7.5,4) circle [radius=0.15];
\node at (7.5,2.5) {$r^{-}$};
\node at (-0.3, 4) {$\Gamma_{>0}$}; 
\node at (6,4) {$($};
\node at (9,4) {$]$};
\draw[fill] (21.5,4) circle [radius=0.15];
\node at (21.5,2.5) {$r^{+}$};
\node at (29.3,4) {$\Gamma_{\geq 0}$};
\node at (20,4) {$[$};
\node at (23,4) {$)$};
\draw[fill] (14.5,10) circle [radius=0.15];
\node at (14.5,11.5) {$r$};
\node at (22.3, 10) {$\R_{>0}$};
\node at (13,10) {$($};
\node at (16,10) {$)$};
\draw[-biggertip,shorten >=3pt,snake=coil,segment aspect=0] (7.5,4) -- (14.5,10);
\draw[-biggertip,shorten >=3pt,snake=coil,segment aspect=0] (21.5,4) -- (14.5,10);

\end{tikzpicture}
}

\subsection{Exceptional rank one uniserial domains}

In what remains of this paper we will prove the following three theorems which describe the Ziegler spectra of exceptional rank one uniserial domains after factoring out by $T_0$.

\begin{theorem}\label{typeC0}
Let $R$ be an exceptional rank one uniserial domain of type $C_0$. Up to $T_0$ the points of $\Zg_R$ are
\begin{enumerate}
\item $N(0,0)$, $N(\J(R),\J(R))$, $N(\J(R),0)$, $N(0,\J(R))$ and
\item for each $n\in\N$, $X_n:=N(\J(R),Q^n)\equiv N(Q^n,\J(R))$.
\end{enumerate}

A subset $C$ of $\Zg_R/T_0$ is closed if and only if the following conditions are satisfied:
\begin{enumerate}[(a)]
\item If $N(\J(R),0)\in C$ or $N(0,\J(R))\in C$ then $N(0,0)\in C$.
\item If $C$ contains infinitely many $X_n$ then $N(\J(R),0),N(0,\J(R))$ and $N(0,0)$ are all in $C$.
\end{enumerate}

\end{theorem}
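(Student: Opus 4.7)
The plan is to apply the framework from Section \ref{pointsforuniserial}, enumerating first all critical and generic pairs via Lemmas \ref{desccrit} and \ref{descgen}. Since $R$ is rank one, the only completely prime two-sided ideals are $0$ and $\J(R)$; since $R$ is of type $C_0$, the two-sided ideals are $\{0\} \cup \{Q^n : n \geq 1\} \cup \{\J(R), R\}$. Setting $P = 0$ in Lemma \ref{desccrit} forces $K = 0$ (any nonzero $k \in K$ witnesses $1 \cdot k \in K$ with $1 \notin K$ and $k \neq 0$, contradicting the condition and giving the only nontrivial module), while $P = \J(R)$ renders the condition vacuous since every $s \notin \J(R)$ is a unit. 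Thus the critical pairs are $\langle 0, 0 \rangle$, $\langle \J(R), 0 \rangle$, $\langle \J(R), Q^n \rangle$ for $n \geq 1$, and $\langle \J(R), \J(R) \rangle$; a symmetric argument via Lemma \ref{descgen} gives the generic pairs $\langle 0, 0 \rangle$, $\langle 0, \J(R) \rangle$, $\langle Q^n, \J(R) \rangle$, and $\langle \J(R), \J(R) \rangle$.

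Next I would show that $N(\J(R), Q^n)$ and $N(Q^n, \J(R))$ are topologically indistinguishable. Direct computation gives $\Ass = \Div = \J(R)$ and $\ann_R = Q^n$ for both modules; the non-trivial step is that $\Ass N(Q^n, \J(R))$ is a completely prime two-sided ideal containing $Q^n$, but $Q^n$ is not completely prime (by definition of exceptional), forcing $\Ass = \J(R)$. Both modules thus share critical and generic types; because the ideal chain in type $C_0$ is sufficiently ``dense'', neither module is isolated in its closure, and the corollary following Lemma \ref{uniserialringsgenandcrit} provides elementary equivalence, which for indecomposable pure-injectives is equivalent to topological indistinguishability. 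For a type (3) module $N(I_b^P, {_a}{^{\Div}}I)$ from Proposition \ref{pointsuptotopind}, rank-one forces $P = \Div N = \J(R)$, so $N$ has the form $N(bR, Ra)$; using $b \in Q^{n-1} \setminus Q^n$ one computes $\ann_R N(bR, Ra) = Q^n$ (the largest two-sided ideal inside $bR$), matching the invariants of $X_n$, so the same elementary-equivalence argument identifies this module with $X_n$ up to $T_0$.

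For the topology, condition (a) is immediate from Corollary \ref{closure} since $\langle 0, 0 \rangle$ is the generic pair of $N(\J(R), 0)$ and the critical pair of $N(0, \J(R))$, so $N(0,0)$ lies in both closures. For (b), the sub-basis of Lemma \ref{subbasis} reduces the question to whether each sub-basic open containing $N(0, 0)$, $N(\J(R), 0)$, or $N(0, \J(R))$ contains $X_n$ for all sufficiently large $n$. The only nontrivial constraint is of the form $a \notin \ann X_n = Q^n$ for some $a \neq 0$, which holds for $n$ large since $\bigcap_n Q^n = 0$; the other sub-basic conditions depend only on membership in $\J(R)$, which is uniform across the $X_n$. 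Hence any closed $C$ containing infinitely many $X_n$ must contain all three of $N(0,0)$, $N(\J(R),0)$, and $N(0,\J(R))$. The converse (sufficiency of (a) and (b)) follows from direct closure computations for each listed point: $\cl N(\J(R), \J(R)) = \{N(\J(R), \J(R))\}$ since its critical and generic types coincide at $\langle \J(R), \J(R) \rangle$, and $\cl X_n$ consists exactly of $X_n$ together with $N(\J(R), 0)$, $N(0, \J(R))$, and $N(0,0)$, by \ref{closure} and the sub-basis check.

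The main obstacle is the topological indistinguishability step in the second paragraph: while the critical and generic types agree across $N(\J(R), Q^n)$, $N(Q^n, \J(R))$, and $N(bR, Ra)$, verifying TI requires comparing pp-invariants across the entire pp-lattice, not merely at its two endpoints. This uses the density of principal right ideals within each shell $Q^{n-1} \setminus Q^n$ --- a structural feature of type $C_0$ domains --- to rule out any pp-pair that could distinguish the modules.
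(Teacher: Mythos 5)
Your enumeration of critical and generic pairs via \ref{desccrit} and \ref{descgen} is correct, and the necessity direction of conditions (a) and (b) (using \ref{closure} and the sub-basis of \ref{subbasis}, matching the paper's \ref{closureofJ0and0J} and \ref{generalclosureofinfset}) is fine. However, two steps do not hold up.

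First, the identification of the type (3) candidates $N(bR,Ra)$ with $X_n$ up to topological indistinguishability is not actually established. You correctly compute that such a module would have the same critical type $\langle\J(R),Q^n\rangle$, generic type $\langle Q^n,\J(R)\rangle$, and annihilator $Q^n$ as $X_n$, but that alone does not force elementary equivalence: the corollary following Lemma \ref{uniserialringsgenandcrit} applies only when neither module is isolated in its closure. You flag this yourself at the end and gesture at ``density of principal right ideals'', but that is not an argument. The paper's mechanism is different and much more decisive: the lemma that if the point $(K,\J(R))\bot(\J(R),K)$ exists then $K$ must be finitely generated as a right ideal, combined with the defining feature of type $C_0$ that no nonzero two-sided ideal is finitely generated, shows that there simply are no type (3) $T_0$-classes in this case. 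You have not ruled out the possibility that $\langle bR,Ra\rangle$ is consistent and gives an isolated point distinct from $X_n$.

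Second, your closure computation for $X_n$ is wrong, and this breaks the sufficiency direction. You assert that $\cl X_n$ consists of $X_n$ together with $N(\J(R),0)$, $N(0,\J(R))$, and $N(0,0)$, but by Corollary \ref{closure} any $N\in\cl X_n$ satisfies $\ann_R N\supseteq\ann_R X_n=Q^n\neq 0$, which excludes all three of those points (each has annihilator $0$). In fact $X_n$ is a closed point: $N(\J(R),\J(R))$ and the other $X_m$ are isolated and so cannot lie in $\cl X_n$, leaving $\cl X_n=\{X_n\}$. This is exactly the content of Lemma \ref{XnclosedandisolatedC0}. Your claim is also inconsistent with the theorem itself: the singleton $\{X_1\}$ satisfies (a) and (b) vacuously and must therefore be closed, which it cannot be if $\cl X_1$ were to contain $N(0,0)$. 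What you seem to have conflated is the closure of a single $X_n$ (which is $\{X_n\}$) with the closure of an \emph{infinite} family of $X_n$'s (which does contain the three extra points, by \ref{generalclosureofinfset}); only the latter feeds into condition (b). The sufficiency argument must instead proceed as in the paper: if $C$ satisfies (a)--(b) and misses one of $N(\J(R),0)$, $N(0,\J(R))$, $N(0,0)$, then by (b) it contains only finitely many $X_n$, hence is finite and, being closed under specialisation, is closed; if $C$ contains all three, one checks pointwise using the isolation of $N(\J(R),\J(R))$ and of each $X_n$ that $\cl C=C$.
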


\begin{theorem}\label{typeC1}
Let $R$ be an exceptional rank one uniserial domain of type $C_1$.
Up to $T_0$ the points of $\Zg_R$ are
\begin{enumerate}
\item $N(0,0)$, $N(\J(R),\J(R))$, $N(\J(R),0)$, $N(0,\J(R))$,
\item for each $n\in\N$, $X_n:=N(z^n\J(R),\J(R))\equiv N(\J(R),\J(R)z^n)$,
\item for each $n\in\N$, $Z_n:=N(z^nR,\J(R))\equiv N(\J(R),Rz^n)$ and
\item for each $n\in\N$, $Y_{n+1}:=N(zR,Rz^n)$ and $Y_1=N(bR,Ra)$ where $ab=z$.
\end{enumerate}
A subset $C$ of $\Zg_R/T_0$ is closed if and only if the following conditions are satisfied:
\begin{enumerate}[(a)]
\item If $X_n\in C$ then $Z_n\in C$.
\item If $Y_n\in C$ then $Z_n\in C$.
\item If $N(\J(R),0)\in C$ or $N(0,\J(R))\in C$ then $N(0,0)\in C$.
\item If $C$ contains infinitely many $X_n$, $Y_n$ or $Z_n$ then $N(\J(R),0)$, $N(0,\J(R))$ and $N(0,0)$ are all in $C$.
\end{enumerate}

\end{theorem}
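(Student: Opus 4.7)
The approach is to apply the general framework of Section~\ref{pointsforuniserial}. Proposition~\ref{pointsuptotopind} guarantees that every point of $\Zg_R$ is topologically indistinguishable from a module of one of three explicit forms: critical ($N(P,K)$), generic ($N(K,P)$), or minimal-pair ($N(I_b^P, {_a}{^Q}I)$). The plan is to enumerate all candidates arising from the $C_1$ ideal chain, identify redundancies via Lemma~\ref{equonepairs}, and then verify the closure conditions (a)--(d) using the sub-basis of Lemma~\ref{subbasis} together with Corollary~\ref{closure}.

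The only non-zero completely prime two-sided ideal of $R$ is $\J(R)$, since by hypothesis $Q = zR$ is prime but not completely prime. Lemmas~\ref{desccrit} and~\ref{descgen} then produce the four extremal critical/generic modules $N(0,0)$, $N(\J(R),\J(R))$, $N(\J(R),0)$, $N(0,\J(R))$ together with the families $N(\J(R), K)$ and $N(K, \J(R))$ for $K \in \{z^n\J(R), z^n R : n \geq 1\}$. Applying Lemma~\ref{equonepairs} with $u = z^n$ and the identity $\J(R)z^n = z^n\J(R)$ (a consequence of $Rz^n = z^nR$ from Lemma~\ref{idealfgasleftideal}) yields the isomorphism $N(z^n\J(R), \J(R)) \cong N(\J(R), \J(R)z^n)$, identifying the single module $X_n$; a parallel argument identifies $Z_n$ as the single equivalence class of $N(z^nR, \J(R)) \equiv N(\J(R), Rz^n)$.

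The type-(3) modules $N(I_b^{\J(R)}, {_a}{^{\J(R)}}I)$ require a direct analysis of the right ideals $I_b^{\J(R)} = \{c : b \notin c\J(R)\}$ and the dual left ideals, proceeding along the right-ideal chain of $R$. Combined with the isomorphism identifications of Lemma~\ref{equonepairs} and the minimal-pair characterisation of Lemma~\ref{Nminpair}, the enumeration yields exactly the modules $Y_{n+1} = N(zR, Rz^n)$ for $n \geq 1$ together with the boundary case $Y_1 = N(bR, Ra)$ where $ab = z$ with $a, b \in \J(R) \setminus zR$. The existence of such a factorisation is a direct consequence of $\J(R)$ not being principal as a right ideal in type $C_1$; consistency of $\langle bR, Ra \rangle$ is checked via Lemma~\ref{Genasconsistencycondition}.

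For the closure conditions, one computes $\Ass$, $\Div$, and $\ann_R$ of each named module. Condition (a) follows from Corollary~\ref{closure} since $\ann_R X_n = z^n\J(R) \subsetneq z^nR = \ann_R Z_n$ while $\Ass$ and $\Div$ both equal $\J(R)$ for the two modules, together with a direct sub-basis verification that $X_n$ lies in every basic open containing $Z_n$; (b) is analogous, as $\ann_R Y_n$ is strictly smaller than $z^nR = \ann_R Z_n$; (c) follows as in the nearly simple case (cf.~Lemma~\ref{topnearsim}), because the generic type of $N(\J(R), 0)$ and the critical type of $N(0, \J(R))$ are both $\langle 0, 0 \rangle$; (d) is a compactness argument, noting that every basic open set around $N(0, 0)$, $N(\J(R), 0)$, or $N(0, \J(R))$ contains all but finitely many of the $X_n, Y_n, Z_n$, since the annihilators of these modules intersect to zero along the chain. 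The main obstacle is the combinatorial enumeration of the type-(3) points --- particularly confirming the consistency of $\langle bR, Ra \rangle$ and that no further type-(3) modules arise up to $\equiv$ --- which is where the non-principality of $\J(R)$ and the specific $C_1$ chain structure play an essential role.
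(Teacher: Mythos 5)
Your overall framework is the same as the paper's (enumerate the points via Proposition~\ref{pointsuptotopind} and Lemma~\ref{listofpoints2}, then analyse the topology via the sub-basis), and the enumeration of points (1)--(4) and the isomorphism identifications via Lemma~\ref{equonepairs} are essentially correct. However, there are real gaps in the topology half of the argument.

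First, you only argue necessity of (a)--(d); the ``if'' direction --- that any $C$ satisfying (a)--(d) is actually closed --- is not addressed at all, and it is not automatic. The paper's argument for it (mirroring the proof of Theorem~\ref{typeCk}) requires two nontrivial facts you never mention: that $X_n$, $Y_n$ and $N(\J(R),\J(R))$ are each isolated points of $\Zg_R/T_0$, and that $\{X_n,Y_n,Z_n\}$ is an open set. Isolation is what lets one dispose of all points except the $Z_n$ in the check $C=\cl C$, and the openness of $\{X_n,Y_n,Z_n\}$ is exactly what forces $Z_n\in\cl C\Rightarrow Z_n\in C$ under (a) and (b). Without these you cannot conclude.

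Second, even for necessity, the claim $Z_n\in\cl X_n$ (needed for (a)) is not a ``direct sub-basis verification'' in the sense suggested: Corollary~\ref{closure} only gives necessary conditions for specialisation, and the actual verification that $X_n$ lies in every basic open neighbourhood of $Z_n$ is the content of a dedicated lemma (the $C_1$ analogue of Lemma~\ref{clXkn}), which involves choosing $\lambda\in\J(R)\setminus(Q\cup Rd)$ and exploiting total-orderedness of pp-definable subgroups. That work should not be compressed into a parenthetical. Relatedly, your annihilator computation for (b) is wrong: by Lemma~\ref{JznRRznJexists}(2), $\ann_R Y_n=z^nR=\ann_R Z_n$, they are \emph{equal}, not strictly nested. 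Fortunately (b) has a cleaner proof than you suggest: since $Y_n$ is the point $(z^nR,\J(R))\bot(\J(R),Rz^n)$ and both its generic and critical modules coincide with $Z_n$ up to elementary equivalence, $Z_n\in\cl Y_n$ is immediate from the definition of $\bot$ (Lemma~\ref{critgenalmostdet} and the surrounding discussion), without any sub-basis work.
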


\begin{theorem}\label{typeCk}
Let $R$ be an exceptional rank one uniserial domain of type $C_k$ for $k>1$.
Up to $T_0$ the points of $\Zg_R$ are
\begin{enumerate}
\item $(0,0)$, $N(\J(R),\J(R))$, $N(\J(R),0)$ and $N(0,\J(R))$,
\item for each $n\in\N$, $X_n:=N(\J(R),Q^n)\equiv N(Q^n,\J(R))$,
\item for each $n\in\N$, $Y_{n+1}:=N(zR,Rz^n)$ and $Y_1=N(bR,Ra)$ where $ab=z$ and
\item for each $n\in\N$, $Z_n:=N(\J(R),Rz^n)\equiv N(z^nR,\J(R))$.
\end{enumerate}

A subset $C$ of $\Zg_R/T_0$ is closed if and only if the following conditions are satisfied:
\begin{enumerate}[(a)]
\item If $Y_n\in C$ then $Z_n\in C$.
\item If $X_{kn}\in C$ then $Z_n\in C$.
\item If $N(\J(R),0)\in C$ or $N(0,\J(R))\in C$ then $N(0,0)\in C$.
\item If $C$ contains infinitely many $X_n$, $Y_n$ or $Z_n$ then $N(\J(R),0)$, $N(0,\J(R))$ and $N(0,0)$ are all in $C$.
\end{enumerate}

\end{theorem}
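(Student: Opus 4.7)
The plan is to parallel the strategy of the $C_0$ and $C_1$ cases, organising the proof into three stages: enumerate the indecomposable pure-injectives up to $T_0$; compute the invariants $\Ass$, $\Div$ and $\ann_R$ of each representative; and then derive the closure conditions using Corollary \ref{closure} for necessity and the sub-basis of Lemma \ref{subbasis} for sufficiency.

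First I would enumerate candidate points via Proposition \ref{pointsuptotopind}. The only completely prime two-sided ideals of an exceptional rank one uniserial domain are $0$ and $\J(R)$, so by \ref{desccrit} and \ref{descgen} every critical or generic type has the form $\langle 0,0\rangle$, $\langle \J(R),K\rangle$ or $\langle K,\J(R)\rangle$ where $K$ runs through the two-sided ideals
\[R\supsetneq \J(R)\supsetneq Q\supsetneq\cdots\supsetneq Q^{k-1}\supsetneq zR\supsetneq Q^k\supsetneq\cdots\supsetneq Q^{2k-1}\supsetneq z^2R\supsetneq\cdots\]
(the case $P=0$ forces $K=0$ since $R$ is a domain, while for $P=\J(R)$ the condition on $K$ is automatic because every non-unit lies in $\J(R)$). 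Using \ref{equonepairs} I would check that $N(\J(R),Q^n)$ and $N(Q^n,\J(R))$, and similarly the two $z^nR$ pairs, need not be isomorphic but share critical and generic types, and hence are topologically indistinguishable by \ref{critgenalmostdet}. This produces families (1), (2) and (4) of the statement.

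The main obstacle is case (iii) of \ref{pointsuptotopind}, the points $N(I_b^P,{_a}^QI)$ coming from minimal pairs. For each choice of $b$ and $a$ with $P=Q=\J(R)$, I would determine when the pair $\langle I_b^{\J(R)},{_a}^{\J(R)}I\rangle$ is consistent via \ref{Genasconsistencycondition}(3), exploiting $\bigcap_n Q^n=0$, and when the resulting module is topologically distinct from its generic and critical modules. A case analysis on the position of $bR$ and $Ra$ in the ideal chain, combined with \ref{Nminpair}, will show that the only genuinely new points are $Y_{n+1}=N(zR,Rz^n)$ for $n\geq 1$ together with the boundary case $Y_1=N(bR,Ra)$ with $ab=z$; the boundary appears because $N(zR,R)$ is not defined, so the $n=0$ case must be parametrised by a proper factorisation of $z$. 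Well-definedness of $Y_1$ up to $T_0$ would follow from checking that different factorisations give modules with the same invariants and the same minimal-pair witness.

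Finally, I would tabulate $\Ass$, $\Div$, $\ann_R$ for every representative. For families (1), (2) and (4) these are read off from \ref{desccrit} and \ref{descgen}; for $Y_n$ one obtains $\Ass=\Div=\J(R)$ and an annihilator strictly contained in both $z^nR$ and $Q^{kn}$. Necessity of (a)--(d) is then immediate from \ref{closure}: for instance (b) holds because $\ann_R X_{kn}=Q^{kn}=z^n\J(R)\subsetneq z^nR=\ann_R Z_n$ forces $Z_n\in\cl\{X_{kn}\}$. For sufficiency, given any $M$ outside a candidate closed set $C$ satisfying (a)--(d), I would produce a sub-basic open set of the form $(xb=0/a|x)$ or $(\phi/\psi)$ from \ref{subbasis} that separates $M$ from $C$, using the totally ordered chain of pp-formulas (Corollary \ref{irredclosedsetchainserial}) and the tabulated invariants to choose $a,b$. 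The most delicate verifications are (a) and (b), where one uses the multiplicative relations $Q^k=z\J(R)$ and $\widehat{Q^k}=zR$ to show that every sub-basic open neighbourhood of $Y_n$ or $X_{kn}$ captures $Z_n$.
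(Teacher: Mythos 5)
Your high-level plan matches the paper's (enumerate the indecomposable pure-injectives via \ref{pointsuptotopind}, tabulate $\Ass$, $\Div$, $\ann_R$, then derive the closure relations), but there are three concrete errors that leave real gaps.

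First, the annihilator of $Y_n$ is miscomputed. By \ref{JznRRznJexists}(2), the annihilator of $N(bR,Ra)$ with $ab=z^n$ is $z^nR$, not something ``strictly contained in both $z^nR$ and $Q^{kn}$'' --- in fact $z^nR\supsetneq Q^{kn}=z^n\J(R)$, so your claimed containment is in the wrong direction on both counts. Since the whole closure analysis is driven by comparing annihilators, this error propagates.

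Second, and more fundamentally, you deduce $Z_n\in\cl\{X_{kn}\}$ from \ref{closure} by noting that $\ann_R X_{kn}\subsetneq\ann_R Z_n$. But \ref{closure} gives only a \emph{necessary} condition for membership in a closure, never a sufficient one. Establishing $Z_n\in\cl\{X_{kn}\}$ is the genuinely hard part; the paper's Lemma \ref{clXkn} does it by producing an explicit basis of open neighbourhoods of $Z_n$ (using Prest duality to reduce to pairs of the form $(xa=0\wedge b|x/xc=0+d|x)$) and verifying that $X_{kn}$ lies in each one. Your sketch has no replacement for this. Relatedly, your final sentence has the quantifier backwards: to show $Z_n\in\cl\{X_{kn}\}$ one must verify that every open neighbourhood of $Z_n$ contains $X_{kn}$, not that every neighbourhood of $X_{kn}$ captures $Z_n$ --- indeed the latter is impossible, since $X_{kn}$ is isolated and $\{X_{kn}\}$ is itself an open set not containing $Z_n$. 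Finally, you omit any analogue of the paper's \ref{xknynznopen} (that $\{X_{kn},Y_n,Z_n\}$ is open), which is what rules out spurious closure relations $Z_m,Y_m\in\cl\{X_{kn}\}$ for $m\neq n$ and is needed for the sufficiency direction.
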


In the above theorems, all but the final condition describing closed sets describe the specialisation relation.

We start by proving some general results. The following lemma describes all critical and generic modules.

\begin{lemma}
Let $R$ be a rank one uniserial domain. If $K$ is a two-sided ideal then $\langle\J(R),K\rangle$ and $\langle K,\J(R)\rangle$ are consistent. Moreover, if $K$ is a non-zero two-sided ideal then the critical type of $N(K,\J(R))$ is $\langle\J(R),K\rangle$ and the generic type of $N(\J(R),K)$ is $\langle K,\J(R)\rangle$. The critical type of $N(0,\J(R))$ is $\langle 0,0\rangle$ and the generic type of $N(\J(R),0)$ is $\langle 0,0\rangle$.
\end{lemma}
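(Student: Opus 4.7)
The plan is to derive every assertion from Lemmas \ref{desccrit} and \ref{descgen} applied with $P = \J(R)$, together with the rank-one hypothesis. The key observation that makes both lemmas apply trivially is that if $s \notin \J(R)$ then $s$ is a unit, so $rs \in K$ (respectively $sr \in K$) forces $r \in K$ because $K$ is a two-sided ideal. Hence \ref{desccrit} yields the consistency of $\langle \J(R), K\rangle$ and identifies it as the critical type of $N(\J(R), K)$, while \ref{descgen} yields the consistency of $\langle K, \J(R)\rangle$ and identifies it as the generic type of $N(K, \J(R))$.

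For the ``swapped'' identifications with $K \neq 0$, I would use the rank-one assumption as follows. From the generic type of $N(K, \J(R))$ just obtained, $\ann_R N(K, \J(R)) = K$. Since $\Ass$ always contains $\ann_R$ and is a completely prime two-sided ideal, the non-zero ideal $K$ pushes $\Ass$ up to $\J(R)$, yielding the desired critical type $\langle \J(R), K\rangle$. Symmetrically, from the critical type of $N(\J(R), K)$ we read off $\ann_R N(\J(R), K) = K$; for the divisibility component of the generic type I would inspect the defining pp-type $J^*/I$ of $N(\J(R), K)$, which contains $\neg(sr \mid xr)$ for every $s \in \J(R)$ and every $r \notin K$. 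Since $K$ is a proper ideal we may take $r = 1$, showing that the realizer is not divisible by any non-zero $s \in \J(R)$, so $\J(R) \subseteq \Div$ and by rank one $\Div = \J(R)$.

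The $K = 0$ cases still follow from the same two lemmas, since the hypothesis continues to hold ($R$ is a domain, so $rs = 0$ forces $r = 0$ or $s = 0 \in \J(R)$), and they give $\ann_R N(\J(R), 0) = 0$ and $\ann_R N(0, \J(R)) = 0$. What remains is to show that $N(\J(R), 0)$ is divisible and that $N(0, \J(R))$ is torsion-free. I would do this by identifying these modules with familiar objects. The module $N(\J(R), 0)$ is the pure-injective hull of a cyclic module on an element annihilated by $\J(R)$, i.e.\ of $R/\J(R)$, which since $R/\J(R)$ is a simple right module coincides with the injective hull $E(R/\J(R))$; an injective module over a domain is divisible, so $\Div N(\J(R), 0) = 0$. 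Similarly $N(0, \J(R))$ is the pure-injective hull of $R_R$, and since $R$ is a right Ore domain, the right division ring of fractions $Q$ is injective, hence pure-injective, over $R$, and $R \hookrightarrow Q$ is pure. The universal property of the pure-injective hull then embeds $N(0, \J(R))$ purely into the torsion-free module $Q$, forcing $\Ass N(0, \J(R)) = 0$.

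The main obstacle is the last step: verifying the two module identifications in the non-commutative setting. Concretely, one needs to check that the pure-injective hull of the simple module $R/\J(R)$ is its injective hull, and that the pure-injective hull of $R_R$ is indecomposable and pure-embeds in $Q$. Everything else is bookkeeping around \ref{desccrit}, \ref{descgen}, and the rank-one observation that the only completely prime two-sided ideals of $R$ are $0$ and $\J(R)$.
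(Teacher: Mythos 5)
Your overall plan matches the paper's for the first half of the statement, but you introduce one concrete error and one genuine gap.

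For the critical type of $N(K,\J(R))$ with $K\neq 0$, you argue exactly as the paper does: read $\ann = K$ off the generic type supplied by \ref{descgen}, observe $\Ass\supseteq\ann$ is a nonzero completely prime two-sided ideal, and conclude $\Ass=\J(R)$ by rank one. For the generic type of $N(\J(R),K)$, however, your inspection of the defining set $J^*/I$ is wrong: for $N(\J(R),K)$ we have $I=\J(R)$ (the right ideal, governing annihilation) and $J=K$ (the left ideal, governing divisibility), so the negated-divisibility clauses are $\neg(sr\mid xr)$ for $s\in K$ and $r\notin\J(R)$, not for $s\in\J(R)$ and $r\notin K$ as you wrote. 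Taking $r=1$ in the corrected set gives only $K\subseteq\Div$, from which $\Div=\J(R)$ does follow by rank one when $K\neq 0$; but this detour is unnecessary. The paper simply mirrors the $\Ass$ argument: $\Div\supseteq\ann=K\neq 0$ and $\Div$ is a completely prime two-sided ideal, so rank one forces $\Div=\J(R)$.

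For $K=0$ your route is genuinely different from the paper's, and it is here that the acknowledged gap sits. You propose to identify $N(\J(R),0)$ with $E(R/\J(R))$ (to get divisibility) and $N(0,\J(R))$ with a direct summand of the division ring of fractions $Q$ (to get torsion-freeness). Those identifications are true but require a nontrivial amount of noncommutative module theory, e.g. that $Q$ is pure-injective over $R$ and that $R\hookrightarrow Q$ is pure, facts you would still need to justify. The paper sidesteps all of this with \ref{equonepairs}: if a consistent pair $\langle I,J\rangle$ is realised in $N(0,\J(R))$ then the isomorphism criterion produces a nonzero $u$ with either $uI=0$ or $I=u\cdot 0$, and in a domain both force $I=0$; hence $\Ass N(0,\J(R))=0$, and symmetrically $\Div N(\J(R),0)=0$. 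You should either fill in the module identifications or, more efficiently, replace this part of your argument with the direct computation from \ref{equonepairs}.
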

\begin{proof}
Suppose $rs\in K$ and $s\notin \J(R)$. Then $s$ is a unit. Since $K$ is two-sided, $r=rss^{-1}\in K$. So, by \ref{desccrit}, $\langle\J(R),K\rangle$ is consistent. That $\langle K,\J(R)\rangle$ is consistent follows in the same way, this time using \ref{descgen}.

Suppose that $K$ is non-zero. Since $\langle K,\J(R)\rangle$ is a generic type, $\Div N(K,\J(R))=\J(R)$ and $\ann_R N(K,\J(R))=K$. Since $\ann_RN(K,\J(R))$ is non-zero and $\J(R)$ is the only non-zero completely prime two-sided ideal, $\Ass N(K,\J(R))=\J(R)$. Thus the critical type of $N(K,\J(R))$ is $\langle\J(R),K\rangle$. Symmetrically, the generic type of $N(\J(R),K)$ is $\langle K,\J(R)\rangle$.

Since $\langle 0,\J(R)\rangle$ is a generic type, $\Div N(0,\J(R))=\J(R)$ and $\ann_R N(0,\J(R))=0$. If $\langle I,J\rangle$ realised in $N(0,\J(R))$ then $aI=0$ for some non-zero $a\in R$. Thus $I=0$. Thus $\Ass N(0,\J(R))=0$. So the critical type of $N(0,\J(R))$ is $\langle 0,0\rangle$. Symmetrically, the generic type of $N(\J(R),0)$ is $\langle 0,0\rangle$.
\end{proof}

\begin{remark}
The above lemma implies that if $R$ is a rank one uniserial domain and $K$ is a non-zero two-sided ideal then $N(K,\J(R))$ and $N(\J(R),K)$ are elementary equivalent.
\end{remark}

We now consider the points which are not topologically indistinguishable from critical or generic modules.

\begin{definition}
Let $R$ be a uniserial ring. Let $P,Q$ be two-sided non-zero completely prime ideals and $K$ a two-sided ideal such that $\langle P,K\rangle $ and $\langle K,Q\rangle $ are consistent. If there exists a point $N$ such that $N_{crit}=N(P,K)$ and $N_{gen}=N(K,Q)$ but $N$ is not in the closure of $N_{crit}$ or $N_{gen}$ then we call this point $(K,Q)\bot (P,K)$.

Note that this point is unique and isolated in its closure even before factoring out by $T_0$ by \ref{critgenalmostdet}. From the proof of \ref{critgenalmostdet} it may also be assumed that there exist $a,b\in R$ such that $(K,Q)\bot (P,K)\in \left(xb=0/a|x\right)$ but neither $N(K,Q)$ nor $N(P,K)$ are in $\left(xb=0/a|x\right)$.
\end{definition}

We now investigate when $(K,Q)\bot (P,K)$ exists.

\begin{lemma}
Let $R$ be a uniserial domain. Then $ (0,\J(R))\bot(\J(R),0)$ does not exist.
\end{lemma}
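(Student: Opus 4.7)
The plan is to proceed by contradiction: assume $N := (0,\J(R))\bot(\J(R),0)$ exists and derive a contradiction from $\ann_R N = 0$. By the very definition of this notation, the critical type of $N$ is $\langle \J(R),0\rangle$ and the generic type is $\langle 0,\J(R)\rangle$, so $\Ass N = \Div N = \J(R)$ while $\ann_R N = 0$. The parenthetical remark following the definition of $(K,Q)\bot(P,K)$, extracted from the proof of Lemma~\ref{critgenalmostdet}, supplies elements $a,b\in R$ with $N\in (xb=0/a|x)$ while $N(\J(R),0)$ and $N(0,\J(R))$ are both outside this basic open set. A short case-analysis, using $\Ass N(\J(R),0)=\J(R)$ and $\Div N(0,\J(R))=\J(R)$, rules out $a=0$ or $b=0$: for instance, if $a=0$ then $\{a\mid x\}=\{0\}$ in every module, so $(xb=0/a|x)$ is either empty (when $b$ is a unit) or contains $N(\J(R),0)$ (when $b\in\J(R)\setminus\{0\}$), and the case $b=0$ is symmetric.

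Next I will invoke Corollary~\ref{irredclosedsetchainserial}: since $R$ is uniserial with unique primitive idempotent $e=1$, every pp-$1$-formula is (trivially) a $1$-formula, so the lattice of pp-$1$-definable subsets of $N$ is totally ordered by inclusion. In particular $\{x\in N : a\mid x\}$ and $\{x\in N : xb=0\}$ are comparable. The hypothesis $N\in (xb=0/a|x)$ translates into $\{x : xb=0\}\not\subseteq\{x : a\mid x\}$ in $N$; comparability then forces the reverse inclusion $\{x : a\mid x\}\subseteq \{x : xb=0\}$ inside $N$.

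Finally, for every $y\in N$ the element $ya$ satisfies $a\mid ya$, hence by the inclusion just established $(ya)b = y(ab) = 0$. This shows $ab\in\ann_R N$. But $R$ is a domain and $a,b$ are both nonzero, so $ab\neq 0$, contradicting $\ann_R N = 0$. The only real subtlety in the argument is the initial bookkeeping ensuring that $a$ and $b$ can both be taken to be nonzero; once that is in hand, the rest is just an application of the chain structure on pp-$1$-definable subsets together with a one-line Goursat-type computation.
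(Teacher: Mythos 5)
Your proof is correct and follows essentially the same approach as the paper's: both proofs extract $a,b$ from the defining property of $(0,\J(R))\bot(\J(R),0)$, use the chain condition on pp-definable subgroups of $N$ to deduce $ab\in\ann_R N=0$, and then obtain a contradiction from the domain hypothesis by examining the cases $a=0$ and $b=0$ (the paper derives $ab=0$ first and then rules out both cases; you rule out $a=0$ and $b=0$ first and then note $ab\neq0$, but the content is identical).
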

\begin{proof}
If $(0,\J(R))\bot(\J(R),0)$ exists then there exist $a,b\in R$ such that $(0,\J(R))\bot(\J(R),0)\in \left(xb=0/a|x\right)$ and $N(\J(R),0),N(0,\J(R))\notin \left(xb=0/a|x\right)$. Since the annihilator of $(\J(R),0)\bot(0,\J(R))$ is $0$ this implies that $ab=0$. Thus either $a=0$ or $b=0$. If $b=0$ then $N(0,\J(R))\in\left(xb=0/a|x\right)=\left(x=x/a|x\right)$. If $a=0$ then $N(\J(R),0)\in\left(xb=0/a|x\right)=\left(xb=0/x=0\right)$. Thus $(0,\J(R))\bot(\J(R),0)$ does not exist.
\end{proof}

\begin{lemma}
Let $R$ be a uniserial domain and $K$ a two-sided ideal. If $(K,\J(R))\bot(\J(R),K)$ exists then $K$ is finitely generated  as a right ideal.

\end{lemma}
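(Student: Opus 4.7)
The plan is to extract a principal generator of $K$ from the separating minimal pair guaranteed by the hypothesis.

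First I would unpack the hypothesis. By the definition of $(K,\J(R))\bot(\J(R),K)$ together with the remark immediately following it, the hypothesis produces elements $a,b\in R$ with $a,b\notin K$ such that $N:=(K,\J(R))\bot(\J(R),K)$ lies in $\left(xb=0/a|x\right)$, whereas $N(\J(R),K)$ and $N(K,\J(R))$ do not. By the proof of \ref{critgenalmostdet}, $xb=0/a|x$ is in fact an $N$-minimal pair, so by \ref{Nminpair}, $N\cong N(I_b^{\J(R)},{_a}^{\J(R)}I)$. A direct computation in the uniserial domain (using that in a uniserial domain $b\in c\J(R)$ if and only if $c\J(R)\supseteq bR$, together with $b\notin b\J(R)$) yields $I_b^{\J(R)}=bR$. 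In particular the annihilator of the realising element is precisely $bR$, so $K=\ann_R N\subseteq bR$ strictly.

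Next I would derive a first structural relation. Since $xb=0/a|x$ is a genuine pp-pair over $N$, we have $\psi(N)\subseteq\phi(N)$, i.e.\ every element of the form $ya$ with $y\in N$ satisfies $yab=0$. Because this holds for all $y\in N$, it forces $ab\in\ann_R N=K$. Hence $abR\subseteq K$, while $a,b\notin K$. Note in particular that $K$ fails to be completely prime, fitting with the fact that this phenomenon is peculiar to the exceptional rank one uniserial domains where $\J(R)$ is the unique completely prime non-zero ideal.

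The claim I would then prove is $K=abR$, which since $R$ is uniserial will give that $K$ is principal as a right ideal, in particular finitely generated. The inclusion $abR\subseteq K$ is already in hand. For the reverse, I would argue by contradiction: if some $k\in K$ lay outside $abR$, then by uniseriality $kR\supsetneq abR$ and $ab\in k\J(R)$, so $ab=kj$ for some $j\in\J(R)$. Moreover, since $K\subseteq\ann(n)=bR$, we can write $k=br$ for some $r\in R$. The plan is to combine these relations with the minimality of the pair $xb=0/a|x$ to exhibit a pp-formula lying strictly between $a|x$ and $xb=0$ in the pp-lattice over $N$, contradicting minimality. Candidates include divisibility formulae such as $br|x$ or $r|x$ together with annihilator refinements such as $xj=0$; the precise choice is constrained by the relation $ab=brj$ obtained above.

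The main obstacle is the non-commutativity: unlike in the invariant case, the identity $ab=brj$ does not yield a clean relation of the form $a=rj$ by cancellation, so one cannot directly conclude that $r|x$ implies $a|x$ or that $r|x$ implies $xb=0$ in $N$. Overcoming this requires systematically exploiting the two-sided structure of $K$, the uniserial chain of principal right ideals, and the topological distinguishability of $N$ from both $N_\text{gen}$ and $N_\text{crit}$ (which rules out extensions of the candidate intermediate formula into pairs that would themselves be opened by the critical or generic modules). Once the intermediate formula is exhibited, the contradiction with $N$-minimality completes the argument, giving $K=abR$.
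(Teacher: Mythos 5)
Your setup is sound — establishing $a,b\notin K$, $ab\in K$, and targeting $K=abR$ — but the proof is not completed, and you say so yourself. The sentence ``Overcoming this requires systematically exploiting the two-sided structure $\dots$ Once the intermediate formula is exhibited, the contradiction with $N$-minimality completes the argument'' describes a plan rather than an argument. That plan is also aimed at the harder target: you factor $k=br$ (on the left through $b$) and reach $ab=brj$, which, as you observe, cannot be simplified in the non-commutative setting and does not translate into a pp-formula implication. You then seek to contradict $N$-minimality of $xb=0/a|x$, but never produce the intermediate formula.

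The paper factors the other way. Given $c\in K$, uniseriality and $a\notin K$ give $c=ab'$ with $b'\in\J(R)$, and one assumes for contradiction $b'\notin bR$, hence $b=b'r$ for some $r\in\J(R)$. The contradiction is then landed in the \emph{critical} module $N(\J(R),K)$, whose annihilator is exactly $K$: since $ab'=c\in K$, the implication $a\vert x\Rightarrow xb'=0$ holds in $N(\J(R),K)$; and since $b'\notin K$ while $r\in\J(R)=\Ass N(\J(R),K)$, one has $N(\J(R),K)\in\left(xb=0/xb'=0\right)$. Combining these gives $N(\J(R),K)\in\left(xb=0/a\vert x\right)$, directly contradicting the hypothesis that neither the critical nor the generic module lies in this open set. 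This sidesteps the non-commutativity issue entirely because the decisive relation $ab'\in K$ gives an implication on the \emph{divisibility} side in the module whose annihilator is $K$, rather than a multiplicative identity that you would have to unravel. If you want to finish your proof, switch to the factorization $c=ab'$ and aim the contradiction at $N(\J(R),K)\notin\left(xb=0/a\vert x\right)$ instead of at $N$-minimality.
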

\begin{proof}
If $N:=(K,\J(R))\bot(\J(R),K)$ then by definition there exists $a,b\in R$ such that $N\in \left(xb=0/a|x\right)$ and $N(K,\J(R)),N(\J(R),K)\notin \left(xb=0/a|x\right)$.

If $a\in K$ then $a|x$ is equivalent to $x=0$ in $N(K,\J(R))$ and $N(\J(R),K)$. Thus $N(\J(R),K)\in\left(xb=0/a|x\right)$. If $b\in K$ then $xb=0$ is equivalent to $x=x$ in $N(K,\J(R))$ and $N(\J(R),K)$. Thus $N(K,\J(R))\in \left(xb=0/a|x\right)$. Therefore $a,b\notin K$.

Since $N$ has annihilator $K$ and $N\in \left(xb=0/a|x\right)$, $ab\in K$.

Suppose that $c\in K$. Since $a\notin K$, $c=ab'$ for some $b'\in \J(R)$. Suppose for a contradiction that $b'\notin bR$. Then $b'r=b$ for some $r\in \J(R)$. Since $b\notin K$, $b'\notin K$. Thus $N(\J(R),K)\in \left(x=x/xb'=0\right)\cap\left(xr=0/x=0\right)=\left(xb=0/xb'=0\right)$. Since $ab'\in K$, $a|x$ implies $xb'=0$ in $N(\J(R),K)$. Thus $N(\J(R),K)\in \left(xb=0/a|x\right)$. Thus we have a contradiction and so, $b'\in bR$. Therefore $c=ab'\in abR$. Since $ab\in K$, $K=abR$.

%
%
%

\end{proof}

Note that over a rank one uniserial domain an ideal is finitely generated as a left ideal if and only if it is finitely generated as a right ideal by \ref{idealfgasleftideal}.

\begin{lemma}\label{JznRRznJexists}
Let $R$ be a rank one uniserial domain with $\J(R)^2=\J(R)$. Let $w\in R$ be such that $wR=Rw$ and $ab=w$.

\begin{enumerate}
\item $\langle bR,Ra\rangle$ is consistent.
\item The annihilator of $N(bR,Ra)$ is $wR$.
\item The point $(wR,\J(R))\bot(\J(R),Rw)$ exists and is $N(bR,Ra)$ where $ab=w$.
\end{enumerate}

\end{lemma}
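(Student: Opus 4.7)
The plan is to prove the three parts in order, with (3) using the preceding results.

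For (1), I would apply Lemma \ref{Genasconsistencycondition}(3) directly. Given test elements $r \in bR$, $r^* \notin bR$, $s \in Ra$, $s^* \notin Ra$, uniseriality plus the domain property furnish $r = br'$, $s = s'a$, $b = r^*\beta$, and $a = \delta s^*$ for some $\beta, \delta \in \J(R)$. Then $sr = s'(ab)r' = s'wr'$, and because $wR = Rw$ is a two-sided ideal, $RsrR \subseteq RwR = wR$. It therefore suffices to show $s^*r^* \notin wR$; assuming otherwise, $s^*r^* = w\nu$ for some $\nu$, and substituting $w = \delta s^*r^*\beta$ gives $s^*r^* = \delta s^*r^* \beta\nu$. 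Right-multiplying by $\beta$ yields $s^*b = \delta (s^*b)(\nu\beta)$, and iterating this relation forces $s^*b \in \bigcap_n \delta^n R$. Since $\delta \in \J(R)$ in a rank one uniserial domain, this intersection must vanish, so $s^*b = 0$, contradicting $s^*, b \neq 0$ in the domain.

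For (2), I want to show $\ann_R N(bR, Ra) = wR$. Write $N := N(bR, Ra)$ with generator $n_0$. Immediately $\ann_R N \subseteq \ann(n_0) = bR$. In the non-degenerate case (both $a, b \in \J(R)$, so $w = ab \in bR$), we have $n_0 w = 0$, and my task is to extend this to $w \in \ann_R N$. I would use Lemma \ref{equonepairs}: for every representation $\langle I_m, J_m\rangle$ of a pp-type realized in $N$, the element $w$ lies in $I_m$. In Case (1), the condition $w \in I_m$ reduces to $uw \in bR$, and $uw \in RwR = wR \subseteq bR$ since $wR$ is two-sided; Case (2) is handled by a parallel argument using $Rw = wR$. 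Since $\ann_R N$ is a two-sided ideal satisfying $wR \subseteq \ann_R N \subseteq bR$, and since under $\J(R)^2 = \J(R)$ the Jacobson radical is non-principal, no principal two-sided ideal sits strictly between $wR$ and the next two-sided ideal above $bR$ in the chain, forcing $\ann_R N = wR$.

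For (3), $\Ass N$ and $\Div N$ are completely prime two-sided ideals containing nonzero elements $b$ and $a$ respectively (from the pp-type of $n_0$); as $R$ is rank one, $\J(R)$ is the unique nonzero completely prime ideal, so $\Ass N = \Div N = \J(R)$. Combined with (2), the generic type of $N$ is $\langle wR, \J(R)\rangle$ (so $N_{\text{gen}} = N(wR, \J(R))$) and the critical type is $\langle \J(R), wR\rangle$ (so $N_{\text{crit}} = N(\J(R), Rw)$). To confirm $N$ is the $\bot$ point, I exhibit the minimal pair $xb = 0 / a \mid x$ opened by $n_0$: using the normal form of pp-$1$-formulas from Lemma \ref{descpp1flaoverserial} together with (2), any candidate intermediate formula $\sigma$ with $xb = 0 >_N \sigma >_N a\mid x$ would reduce to a divisibility $s\mid x$ or annihilator $xt = 0$ formula, and the constraints from $\ann_R N = wR$ and $\Ass N = \Div N = \J(R)$ preclude such intermediates. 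Hence $N$ is isolated in its closure and distinct from $N_{\text{gen}}$ and $N_{\text{crit}}$; by Lemma \ref{critgenalmostdet}, $N(bR, Ra)$ coincides with the $\bot$ point $(wR, \J(R))\bot(\J(R), Rw)$, whose existence is thereby established.

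The main technical obstacles are: in (1), justifying $\bigcap_n \delta^n R = 0$ as a structural property of rank one uniserial domains (a non-zero intersection would correspond to a fixed right-ideal cut $I = \delta I$ incompatible with rank one); in (2), extending $n_0 w = 0$ from the cyclic submodule to every element of the pure-injective hull via the representation-independent argument using \ref{equonepairs}; and in (3), the detailed minimality verification of the pair $xb = 0 / a\mid x$. All three hinge on the interplay between rank one uniseriality, the two-sidedness of $wR$, and the idempotence $\J(R)^2 = \J(R)$.
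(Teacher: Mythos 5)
Your overall strategy mirrors the paper's for parts (1) and (2) (verify the consistency criterion directly, then determine the annihilator) and diverges in (3), but there are two genuine gaps in the execution of (1) and (2).

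\textbf{Part (1).} After deriving $s^*r^* = \delta (s^*r^*)(\beta\nu)$, you iterate to place $s^*b$ in $\bigcap_n \delta^n R$ and then assert that this intersection vanishes in a rank one uniserial domain. That auxiliary fact is \emph{false} in general: for the nearly simple uniserial domain constructed in Section~\ref{Sexamples} (the localisation $(FP)T^{-1}$), principal right ideals are indexed by $[\epsilon,\infty)$ via $fR \mapsto f(\epsilon)$, and for $\delta$ corresponding to $f = at + b$ with $0 < a < 1$ and $f(\epsilon) > \epsilon$, the values $f^n(\epsilon)$ increase to a \emph{finite} limit $L = b/(1-a)$, so $\bigcap_n \delta^n R$ is the nonzero right ideal cut at $L$. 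The paper instead exploits $wR = Rw$ before iterating: from $w = ab = \delta s^*r^*\beta$ and $s^*r^* = w\nu$ one gets $w = \delta w(\nu\beta)$; since $w\nu\beta \in wR = Rw$ one may write $w\nu\beta = \mu w$, giving $w = \delta\mu w$ and hence $\delta\mu = 1$ (domain, $w \neq 0$), contradicting $\delta \in \J(R)$. Your own derivation is one commutation away from this clean argument, but as written it depends on a false lemma.

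\textbf{Part (2).} Showing $w \in \ann_R N$ works (your representation-by-representation argument via \ref{equonepairs} can be made rigorous, though the paper's route---$N$ opens $xb=0/a|x$ and $\pp^1_N$ is a chain, so $a\mid m$ forces $mb = 0$, whence $mw = 0$ for all $m$---is much shorter). The real content is the reverse inclusion $\ann_R N \subseteq wR$, and here your argument breaks: the claim that there is no two-sided ideal strictly between $wR$ and $bR$ is not needed and not true. For example, for an invariant rank one uniserial domain with dense value group (a valuation domain with value group $\Q$, say), taking $v(a) = v(b) = 1$ so $v(w) = 2$, there is a continuum of two-sided ideals between $wR$ and $bR$; nevertheless $\ann_R N(bR,Ra) = wR$ still holds. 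The paper proves the reverse inclusion by first establishing $\ann_R N \subseteq Rrb$ for \emph{every} $r \notin Ra$ (by passing to the isomorphic $N(rbR, Rc)$ via \ref{equonepairs} and using \ref{idealfgasleftideal}), and then, given $\lambda = \lambda' b \in \ann_R N$ with $\lambda \notin Rw$, factoring $c = c_1 c_2$ in $\J(R)^2 = \J(R)$ where $a = c\lambda'$, to produce $c_2\lambda' \notin Ra$ and derive $\lambda \in Rc_2\lambda$, forcing $c_2$ to be a unit---a contradiction. This delicate use of $\J(R)^2 = \J(R)$ is the crux of (2), and your proposal bypasses it.

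\textbf{Part (3).} Your route is genuinely different from the paper's and essentially sound: you verify directly that $xb = 0 / a\mid x$ is an $N$-minimal pair (if $xt = 0$ sat strictly between, then $at \in \ann_R N = wR = abR$ would force $t \in bR$, collapsing the pair; dually for $s\mid x$ using $Rw = Rab$), hence $N$ is isolated in $\cl N$, and since $N_{\mathrm{gen}}, N_{\mathrm{crit}} \in \cl N$ with $N_{\mathrm{gen}}, N_{\mathrm{crit}} \neq N$ (which should be checked via \ref{equonepairs}) they are excluded from $(xb=0/a\mid x)$. The paper instead observes that $1 + wR \in R/wR$ realises $(wR,\J(R))$ and that $R/wR$ does not open $xb=0/a\mid x$ (because $rb \in Rw$ forces $r \in Ra$), so the summand $N(wR,\J(R))$ of $\PE(R/wR)$ is outside $(xb=0/a\mid x)$. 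Both approaches are fine; note, though, that citing \ref{critgenalmostdet} at the end does not bear on \emph{existence} of the $\bot$ point---it only says such a point, if it exists, is unique---so that reference should be dropped.
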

\begin{proof}
$(1)$ Suppose $\alpha\notin Ra$, $\beta\notin bR$ and $\alpha\beta\in RabR=wR$. Since $\alpha\notin Ra$ and $\beta\notin bR$, $a=r\alpha$ and $b=\beta s$ for some $r,s\in\J(R)$. Since $\alpha\beta\in RabR=wR$, $\alpha\beta =w\lambda$ for some $\lambda\in R$. Thus $w=ab=r\alpha\beta s=rw\lambda s$. Since $wR=Rw$, this implies $w=r\mu w$ for some $\mu\in R$. This gives a contradiction, since $r\in \J(R)$. Thus, if $ab=w$ then, by \ref{Genasconsistencycondition}, $\langle bR,Ra\rangle$ is consistent.

\medskip
\noindent
$(2)$ Clearly $w\in \ann_RN(bR,Ra)$ because $N(bR,Ra)$ opens the pair $xb=0/a|x$ and so for all $m\in N(bR,Ra)$, $a|m$ implies $mb=0$.

If $r\notin Ra$ then $a=cr$ for some $c\in \J(R)$. Let $d=rb$. Then $w=ab=crb=cd$. Thus $\langle dR,Rc\rangle$ is a consistent pair and by \ref{equonepairs}, $N(bR,Ra)$ and $N(dR,Rc)$ are isomorphic. Thus $\ann_RN(bR,Ra)\subseteq rbR$ for all $r\notin Ra$. Since $\ann_RN(bR,Ra)$ is two-sided, by \ref{idealfgasleftideal}, $\ann_RN(bR,Ra)=rbR$ if and only if $\ann_RN(bR,Ra)=Rrb$. If $\ann_RN(bR,Ra)\subsetneq rbR$ then $rb\notin \ann_RN(bR,Ra)$ and hence $\ann_RN(bR,Ra)\subsetneq Rrb$. Thus $\ann_RN(bR,Ra)\subseteq Rrb$ for all $r\notin Ra$.

Suppose $\lambda\in \ann_RN(bR,Ra)$. Then $\lambda =\lambda' b$ for some $\lambda'\in R$. Suppose for a contradiction that $\lambda\notin Rw$.  Then $\lambda'\notin Ra$. So $a=c\lambda'$ for some $c\in \J(R)$. Since $\J(R)^2=\J(R)$, there exists $c_1,c_2\in \J(R)$ such that $c_1c_2=c$. Thus $c_2\lambda'\notin Ra$. But since $\lambda\in \ann_RN(bR,Ra)$, $\lambda \in Rc_2\lambda'b=Rc_2\lambda$. Since $R$ is a domain, this implies $1\in Rc_2$ and hence $c_2\notin \J(R)$. Thus we have a contradiction and so $\lambda\in wR=Rw$.

\medskip
\noindent
$(3)$ Since $\Ass N(Ra,bR)=\Div N(Ra,bR)=\J(R)$, the annihilator of $N(bR,Ra)$ is $wR$ and $N(bR,Ra)\in \left(xb=0/a|x\right)$ it is enough to show that $N(wR,\J(R)),N(\J(R),Rw)\notin \left(xb=0/a|x\right)$. Since $N(wR,\J(R))$ and $N(\J(R),Rw)$ are elementary equivalent, it is enough to show $N(wR,\J(R))\notin \left(xb=0/a|x\right)$. Note that $1+wR \in R/wR$ realises $(wR,\J(R))$.

Suppose $r\in R$ and $rb\in wR=Rw$.  Then $rb=sab$ for some $s\in R$. So $r=sa$. Thus the right module $R/wR$ does not open the pair $xb=0/a|x$. Therefore the pure-injective hull of $R/wR$ does not open the pair $xb=0/a|x$. So, since $N(bR,Ra)$ is a direct summand of the pure-injective hull of $R/wR$, $N(bR,Ra)\notin \left(xb=0/a|x\right)$.
\end{proof}

Thus we now know that any point which is topologically distinguishable from its generic and critical point is of the form $N(bR,Ra)$ where $abR=Rab$, that the critical type of such a point is $\langle \J(R),Rab\rangle$ and that the generic type of such a point is $\langle abR,\J(R)\rangle$. Note that we know from section \ref{pointsforuniserial} that these points are isolated in their closure and hence are $T_0$-points.

\begin{lemma}\label{listofpoints2}
Let $R$ be an exceptional rank one uniserial domain. All points in $\Zg_R$ are topologically indistinguishable from an indecomposable pure-injective of one of the following forms:
\begin{enumerate}
\item $N(0,0)$, $N(0,\J(R))$, $N(\J(R),0)$, $N(\J(R),\J(R))$
\item $N(\J(R),K)\equiv N(K,\J(R))$ where $K$ is a two-sided non-zero ideal
\item $N(bR,Ra)$ where $abR=Rab$
\end{enumerate}
\end{lemma}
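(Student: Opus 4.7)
The plan is to reduce the statement directly to Proposition \ref{pointsuptotopind}, then chase through what each of its three cases gives us in the special setting of a rank one uniserial domain. Recall that by rank one, the only completely prime two-sided ideals of $R$ are $0$ and $\J(R)$.

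First I would dispatch case (1) of \ref{pointsuptotopind}: points topologically indistinguishable from $N(P,K)$ where $P$ is completely prime and $rs\in K$ implies $r\in K$ or $s\in P$. If $P=0$ then, since $R$ is a domain, the condition forces $K=0$ (otherwise take any $0\neq k\in K$ and $1\cdot k\in K$ gives $1\in K$). If $P=\J(R)$, any non-zero two-sided ideal $K$ satisfies the condition automatically (for $s\notin\J(R)$ is a unit, so $rs\in K$ and $K$ two-sided give $r\in K$), and $K=0$ also works since $R$ is a domain. Thus case (1) of the proposition produces exactly $N(0,0)$, $N(\J(R),0)$, $N(\J(R),\J(R))$, and $N(\J(R),K)$ for non-zero two-sided $K$. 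A symmetric analysis for case (2) of \ref{pointsuptotopind} yields $N(0,0)$, $N(0,\J(R))$, $N(\J(R),\J(R))$, and $N(K,\J(R))$. Combining with the remark recorded above that $N(\J(R),K)\equiv N(K,\J(R))$ for non-zero two-sided $K$ covers items (1) and (2) of the lemma.

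Next I would handle case (3) of \ref{pointsuptotopind}: points of the form $N(I_b^P,{}_a{}^QI)$ with $a\in Q=\Div N$, $b\in P=\Ass N$, and $a,b\notin\ann_R N$. Since $a,b$ are non-zero, the completely prime ideals $P$ and $Q$ are non-zero, hence $P=Q=\J(R)$. The main small computation is to identify $I_b^{\J(R)}=bR$ and ${}_a^{\J(R)}I=Ra$: using that right ideals of $R$ are totally ordered, $c\in I_b^{\J(R)}$ iff $b\notin c\J(R)$; if $c\in bR$, write $c=br$ and note $b=c j$ would force $rj=1$, impossible for $j\in\J(R)$, so $c\in I_b^{\J(R)}$; conversely if $c\notin bR$ then $bR\subsetneq cR$, so $b=cr$ with $r\in\J(R)$, i.e.\ $b\in c\J(R)$. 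The left-hand statement is symmetric. Therefore such a point has the form $N(bR,Ra)$.

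Finally I would pin down the relation $abR=Rab$. From the earlier lemma following the definition of $(K,\J(R))\perp(\J(R),K)$, the existence of such a point already forces the annihilator $K$ to be principal (and in fact equal to $abR$). Applying Lemma \ref{idealfgasleftideal}, a principal right ideal which is two-sided in a rank one uniserial domain satisfies $abR=Rab$, and then Lemma \ref{JznRRznJexists}(1)(3) confirms that $\langle bR,Ra\rangle$ is consistent and that $N(bR,Ra)$ is the resulting distinguished point. This yields item (3) of the lemma and completes the classification. The only potentially delicate point is the identification of $I_b^{\J(R)}$ and ${}_a^{\J(R)}I$ with $bR$ and $Ra$, but the total order on right ideals over a uniserial ring turns this into the short dichotomy above.
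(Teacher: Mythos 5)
Your argument is correct and follows essentially the same route the paper takes: it specialises Proposition \ref{pointsuptotopind} to the rank one setting where the only completely prime ideals are $0$ and $\J(R)$, disposes of cases (1) and (2) by the domain/unit observations, and reduces case (3) to $N(bR,Ra)$ with $\ann_R N = abR = Rab$ via the finite-generation lemma and \ref{idealfgasleftideal}. The one thing you add that the paper leaves implicit is the explicit identification $I_b^{\J(R)}=bR$ and ${}_a^{\J(R)}I=Ra$ via the total order on principal right (resp.\ left) ideals; the paper instead reaches the same conclusion by arguing through the $(K,\J(R))\bot(\J(R),K)$ formalism together with the uniqueness statement in \ref{critgenalmostdet} and then quoting \ref{JznRRznJexists}. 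Either way the content is the same, and your computation is a worthwhile detail to make explicit. One minor point you should flag in a polished write-up: when invoking \ref{JznRRznJexists} you are implicitly using $\J(R)^2=\J(R)$, which holds because $\J(R)$ is not principal for an exceptional rank one uniserial domain (type $C_0$ has no finitely generated ideals, and for $C_k$ with $k\geq 1$ the existence of the non-divisorial ideal $z\J(R)\subsetneq zR$ forces $\J(R)$ non-principal); the paper uses this silently throughout the exceptional subsection.
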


We now describe as much of the topology of $\Zg_R$ as possible without specialising to $C_n$ for a particular $n\in\N$.

\begin{lemma}\label{pointwith2prinidealsisolated}
Let $R$ be an exceptional rank one uniserial domain with $\J(R)^2=\J(R)$. Let $w\in R$ be such that $wR=Rw$ and $ab=w$. The point $N(bR,Ra)$ is isolated in $\Zg_R$.
\end{lemma}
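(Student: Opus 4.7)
I plan to exhibit a basic open set
\[U \;=\; (xb=0/a|x) \cap (x=x/xr=0)\]
containing $N(bR,Ra)$ and no other indecomposable pure-injective. From \ref{JznRRznJexists} we already know $N(bR,Ra) \in (xb=0/a|x)$, that $\ann_R N(bR,Ra) = wR$, and that neither the critical $N(\J(R),wR)$ nor the generic $N(wR,\J(R))$ lies in this set.

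The first step is an annihilator-pinning argument. I will show that any indecomposable pure-injective $M$ opening $(xb=0/a|x)$ automatically satisfies $w \in \ann_R M$: by \ref{euniserial} the pp-definable subsets of $M$ form a chain, so $Ma = (a|x)(M)$ and $(xb=0)(M)$ are comparable; since $M$ opens the pair, $(xb=0)(M)\not\subseteq Ma$, so $Ma \subseteq (xb=0)(M)$, and therefore $Mw = (Ma)b = 0$. Next, the two-sided ideals of $R$ form a chain, so $wR$ has an immediate successor $K_1$; I pick $r \in K_1 \setminus wR$. By \ref{subbasis}, $(x=x/xr=0)$ is exactly $\{M : r \notin \ann_R M\}$, so any $M \in U$ satisfies $wR \subseteq \ann_R M$ while $\ann_R M \not\supseteq K_1$, forcing $\ann_R M = wR$. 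Note $N(bR,Ra) \in U$ since $r \notin wR = \ann_R N(bR,Ra)$.

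The final step is to show that $N(bR,Ra)$ is the only module in $U$. Opening $(xb=0/a|x)$ requires $b \in \Ass M$ and an element of $M$ not divisible by $a$, which forces $\Ass M = \Div M = \J(R)$. By \ref{listofpoints2} the candidates with $\ann_R M = wR$ are $N(\J(R),wR)$, $N(wR,\J(R))$, or $N(b'R,Ra')$ for some factorisation $a'b'R = Ra'b' = wR$. The first two are excluded by \ref{JznRRznJexists}(3). For any remaining $N(b'R,Ra')$ with $a'b'R = wR$, the critical and generic types agree with those of $N(bR,Ra)$, and the analogue of \ref{JznRRznJexists}(3) applied to $(a',b')$ shows that $N(b'R,Ra')$ is not in the closure of that common critical/generic pair; Lemma \ref{critgenalmostdet} then forces $N(b'R,Ra') = N(bR,Ra)$, so $U = \{N(bR,Ra)\}$. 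The main technical obstacle is the chain-property argument establishing $w \in \ann_R M$ for every $M$ opening the pair; once this is in hand, isolation is a direct bookkeeping exercise using the classification \ref{listofpoints2} and the uniqueness lemma \ref{critgenalmostdet}.
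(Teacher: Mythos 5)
Your proof is correct and takes essentially the same approach as the paper: both use the open set $(xb=0/a|x)\cap(x=x/x\lambda=0)$ with $\lambda$ chosen just above $wR$ in the ideal chain, show this pins $\ann_R N$ to $wR$, and then use \ref{JznRRznJexists}(3) together with the uniqueness of $(wR,\J(R))\bot(\J(R),Rw)$ (which rests on \ref{critgenalmostdet}) to conclude $N(bR,Ra)$ is the only point in the set. Your chain-property derivation of $w\in\ann_R M$ merely spells out a step the paper states without comment.
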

\begin{proof}
Since $R$ is exceptional, there exists a minimal ideal $I$ such that $wR\subsetneq I$. Take $\lambda\in I\backslash wR$. Then $N(bR,Ra)\in\left(x=x/x\lambda=0\right) $ and $N\in\left(x=x/x\lambda=0\right) $ if and only if $\ann_RN\subseteq wR$. Moreover $N(bR,Ra)\in\left(xb=0/a|x\right)$ and $N\in \left(xb=0/a|x\right)$ implies $w=ab\in\ann_RN$. Thus if $N\in \left(x=x/x\lambda=0\right)\cap\left(xb=0/a|x\right)$ then $\ann_RN=wR$. This means that $N$ is topologically indistinguishable from $N(bR,Ra)$ or $N(\J(R),Rw)\equiv N(wR,\J(R))$. We have already shown in the proof of \ref{JznRRznJexists} that $N(\J(R),Rw)\equiv N(wR,\J(R))\notin \left(xb=0/a|x\right)$. Thus $N\in \left(x=x/x\lambda=0\right)\cap\left(xb=0/a|x\right)$ implies $N$ is topologically indistinguishable from $N(bR,Ra)$. Since $N(bR,Ra)$ is isolated in its closure, $N=N(bR,Ra)$.
\end{proof}

\begin{lemma}\label{00andJJclosed}
Let $R$ be a rank one exceptional uniserial domain. The point $N(0,0)$ is closed and the point $N(\J(R),\J(R))$ closed and isolated. In particular, these points are $T_0$-points.
\end{lemma}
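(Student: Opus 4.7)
The plan is to treat three claims separately: (a) closedness of $N(0,0)$, (b) closedness of $N(\J(R),\J(R))$, and (c) isolation of $N(\J(R),\J(R))$. The ``$T_0$-point'' conclusion will then be free, since a closed point $x$ in any topological space is automatically a $T_0$-point: the open set $\Zg_R\setminus\{x\}$ distinguishes $x$ from every $y\neq x$.

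For (a), we observe that if $M\in\cl\{N(0,0)\}$, then by \ref{closure} we have $\Ass M\subseteq\Ass N(0,0)=0$ and $\Div M\subseteq\Div N(0,0)=0$, so $M$ is torsion-free and divisible. Since $R$ is a uniserial (hence Ore) domain with division ring of fractions $Q_R$, any such $M$ is a right $Q_R$-vector space, and the unique indecomposable pure-injective of that form is $Q_R$ itself, namely $N(0,0)$. For (b), the same corollary yields $\ann_R M\supseteq\J(R)$; since $M\neq 0$ and $\J(R)$ is the maximal ideal, $\ann_R M=\J(R)$, so $M$ is a right vector space over the division ring $R/\J(R)$, and indecomposability of the pure-injective $M$ reduces it to one dimension, giving $M=R/\J(R)=N(\J(R),\J(R))$.

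For (c), the plan is to isolate $R/\J(R)$ by a basic open set of the form $(xa=0)/(d|x)$. The choice of $a,d$ uses the exceptional structure: primality of $Q$ gives $\J(R)^2\not\subseteq Q$ (else $\J(R)\subseteq Q$), and since $Q$ is two-sided this forces the existence of $a,d\in\J(R)\setminus Q$ with $da\notin Q$; then $RdaR$ is a two-sided ideal properly containing $Q$, hence equal to $\J(R)$. One checks $R/\J(R)\in(xa=0)/(d|x)$ by taking $m=1+\J(R)$. The hard part is to show that no other indecomposable pure-injective lies in this open set: given $N\ni m$ with $ma=0$ and $d\nmid m$, let $\langle I,J\rangle$ be the $e$-pair of the pp-type of $m$, so $a\in I$ and $d\in J$. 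Applying the consistency criterion \ref{Genasconsistencycondition}(3) with $r=a$, $s=d$, $s^*=1\notin J$ and $r^*\in R\setminus I$ forces $r^*\notin RdaR=\J(R)$, hence $I\supseteq\J(R)$; by the symmetric choice $r^*=1\notin I$ and $s^*\in R\setminus J$ one similarly gets $J\supseteq\J(R)$. Properness of $I$ and $J$ (since $m\neq 0$) collapses both to $\J(R)$, so $m$ realises the pp-type $\langle\J(R),\J(R)\rangle$; then the map $R/\J(R)\hookrightarrow N$ sending the generator to $m$ is a pure embedding, and pure-injectivity of $R/\J(R)$ combined with indecomposability of $N$ forces $N=R/\J(R)$.
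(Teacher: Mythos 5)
Your proof is correct, but it takes a genuinely different route from the paper's at both stages. For closedness, the paper observes that $R\to R/\J(R)$ and $R\hookrightarrow Q(R)$ are ring epimorphisms and then cites \cite[5.5.3]{PSL} to conclude that $\Zg_{R/\J(R)}$ and $\Zg_{Q(R)}$, each a singleton, embed as closed subsets of $\Zg_R$; you instead apply Corollary \ref{closure} directly to show that any point in the closure is torsion-free and divisible (hence a $Q(R)$-module) or annihilated by $\J(R)$ (hence an $R/\J(R)$-module). Both work, and yours is more self-contained, while the paper's is the more conceptual one-liner. For isolation, the paper takes $c\in\J(R)\setminus Q$, writes $c=ab$ with $a,b\in\J(R)$ using $\J(R)^2=\J(R)$, and shows $N\in(xb=0/a\mid x)$ forces $c\in\ann_R N$ directly from the fact that the pp-definable subgroups of $N$ form a chain, so $(a\mid x)(N)\subsetneq(xb=0)(N)$ and hence $Nab=0$. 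Your route to the same conclusion is to name the consistent $e$-pair $\langle I,J\rangle$ of the witness $m$ and drive the consistency criterion \ref{Genasconsistencycondition}(3) twice (once with $s^*=1$ and once with $r^*=1$) to squeeze $I=J=\J(R)$; this costs more steps but does correctly pin down the pp-type of $m$ as $\langle\J(R),\J(R)\rangle$. The final step — that an element of an indecomposable pure-injective realising $\langle\J(R),\J(R)\rangle$ forces $N\cong N(\J(R),\J(R))$ — is fine, though it is cleaner to invoke the general fact that the pure-injective hull of the pp-type of $m$ is a direct summand of $N$ rather than to argue the map $R/\J(R)\hookrightarrow N$ is pure by hand (your pure-embedding claim does hold, since $1+\J(R)$ and $m$ have the same pp-type and every element of $R/\J(R)$ is a scalar multiple of the generator, but it needs that observation spelled out).
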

\begin{proof}
The type $\langle\J(R),\J(R)\rangle$ is realised in $R/\J(R)$ and the type $\langle 0,0\rangle$ is realised in $F$ the division ring of fractions of $R$. Both $R/\J(R)$ and $F$ are division rings and thus their Ziegler spectra have exactly one point $R/\J(R)$ and $F$ respectively. The canonical map from $R$ to $R/\J(R)$ is an epimorphism and the canonical embedding of $R$ into $F$ is an epimorphism. Thus, \cite[5.5.3]{PSL}, $\Zg_{R/\J(R)}$ and $\Zg_{F}$ embed into $\Zg_R$ as closed subsets. So $R/\J(R)$ and $F$ are closed points of $\Zg_R$.

We now show that $N(\J(R),\J(R))$ is isolated. Take $c\in \J(R)\backslash Q$ where $Q$ is the exceptional prime. Since $\J(R)^2=\J(R)$, there exist $a,b\in \J(R)$ such that $ab=c$. Thus $(\J(R),\J(R))\in \left(xb=0/a|x\right)$. If $N\in\left(xb=0/a|x\right)$ then $c=ab\in \ann_RN$. So $\ann_RN=\J(R)$. Thus $N=N(\J(R),\J(R))$.
\end{proof}

\begin{lemma}\label{nonfgannpointsiso}
Let $R$ be an exceptional rank one uniserial domain. If $K$ is a non-zero non-finitely generated ideal then $N(K,\J(R))\equiv N(\J(R),K)$ is isolated in $\Zg_R/T_0$.
\end{lemma}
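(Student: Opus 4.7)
The plan is to exhibit an open set $U \subseteq \Zg_R$ that coincides with the topological-indistinguishability class of $N := N(\J(R),K) \equiv N(K,\J(R))$, so that $[N]$ is open (hence isolated) in $\Zg_R/T_0$. By Lemma~\ref{listofpoints2} together with the fact that $K$ is non-principal (all candidate points have either $\ann = 0$, $\ann = \J(R)$, $\ann$ principal, or $\ann = K'$ for some non-zero two-sided $K'$), this indistinguishability class is exactly $\{M \in \Zg_R : \ann M = K\}$.

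The case $K = \J(R)$ is already covered by Lemma~\ref{00andJJclosed}, so assume $K \subsetneq \J(R)$. In the chain of two-sided ideals of $R$, which is well-ordered in both directions by the classification in \cite{B-D}, let $K^-$ be the unique cover of $K$ from below and $K^+ \subseteq \J(R)$ the unique cover of $K$ from above. Choose $c \in K^+ \setminus K$, $b \in K \setminus K^-$, and (this is the technical point) $a \in \J(R) \setminus K$ such that $ab \notin K^-$; since $Rb \supsetneq K^-$ strictly and $\J(R)\cdot b$ is the maximal proper sub-left-ideal of the principal left ideal $Rb$, the left ideal $\J(R) b$ strictly contains $K^-$ provided the chain structure in the exceptional case gives room between $\J(R) b$ and $K^-$, which I would verify by direct inspection of the type-$C_k$ chains.

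I claim $U := (x = x/xc = 0) \cap (xb = 0/a|x)$ isolates $N$. That $N \in U$ is immediate: $c \notin K = \ann N$ gives membership in the first factor, and $b \in K = \ann N$ together with $a \in \J(R) = \Div N$ opens the second factor via the critical element. For the converse, I would go through Lemma~\ref{listofpoints2} point by point. The corner points $N(0,0)$, $N(0,\J(R))$ are excluded by the second factor (since $\Ass = 0$ kills any $\{m : mb=0\}$), $N(\J(R),0)$ by the second factor (since $\Div = 0$ forces $Ma = M$), and $N(\J(R),\J(R))$ by the first factor (since $c \in \J(R) = \ann$). For $N(\J(R),K')$ with $K' \supsetneq K$, the cover property forces $K' \supseteq K^+ \ni c$, so $c$ lies in the annihilator and the first factor excludes $M$. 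For $N(\J(R), K')$ with $K' \subsetneq K$ non-zero, the cover property gives $K' \subseteq K^-$, and then $b, a, ab$ all lie outside $K'$, so none of the three conditions ($ab \in K'$ with $a,b \notin K'$; $a \in K'$ with $b \in \J(R)$; $b \in K'$ with $a \in \J(R)$) for the pair $(xb=0/a|x)$ to open in $N(\J(R),K')$ can hold. Finally, for the $\perp$-points $N(b'R, Ra')$ whose principal annihilator $a'b'R$ cannot equal the non-principal $K$, the same dichotomy applies: either $a'b'R \supseteq K^+ \ni c$ and the first factor excludes, or $a'b'R \subseteq K^-$ and the same argument for the second factor applies verbatim.

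The main obstacle is verifying the existence of $a \in \J(R) \setminus K$ with $ab \notin K^-$. Equivalently one must show $\J(R) b \not\subseteq K^-$, which reduces to showing that the maximal proper sub-left-ideal of the principal left ideal $Rb$ properly contains $K^-$. This relies on the explicit interleaving of principal and non-principal ideals in the chain for exceptional rank-one uniserial
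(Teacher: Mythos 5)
Your overall strategy matches the paper's: construct a basic open set of the form $\left(x=x/x\lambda=0\right)\cap\left(x\mu=0/a|x\right)$ that forces $\ann_R M = K$, and then invoke the classification of points to conclude that, modulo $T_0$, the only point with that annihilator is $N(K,\J(R))$. Your choice of $c\in K^+\setminus K$ for the first factor is the same as the paper's $\lambda\in I_1\setminus K$. But there is a genuine gap in the second factor, and you flag it yourself.

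The gap: you fix $b\in K\setminus K^-$ first and then ask for $a\in\J(R)$ with $ab\notin K^-$, reducing this to showing $\J(R)b\not\subseteq K^-$ for your chosen $b$. You leave this unverified ("I would verify by direct inspection of the type-$C_k$ chains") and, worse, for a \emph{fixed} $b$ it is not clear the statement holds: $\J(R)b$ is the maximal proper left subideal of $Rb$, and $K^-$ sits strictly inside $Rb$, so $K^-\subseteq\J(R)b$ always --- but nothing you wrote rules out $K^-=\J(R)b$ for a bad choice of $b$. What you actually need is the weaker existential statement: \emph{some} $b\in K\setminus K^-$ and $a\in\J(R)$ with $ab\notin K^-$. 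The paper gets this in one line by choosing $a,\mu$ simultaneously rather than sequentially: since $K$ is not finitely generated, $\J(R)K=K$, so any $\nu\in K\setminus K^-$ factors as $\nu=a\mu$ with $a\in\J(R)$ and $\mu\in K$. Since $\nu=a\mu\notin K^-$ and $K^-$ is a left ideal, automatically $\mu\notin K^-$, so $\mu$ serves as your $b$. Notice this is exactly where the hypothesis "$K$ not finitely generated" is used; your sketch never actually invokes it in the existence argument, which is a signal that something is missing. The extra constraint $a\notin K$ that you impose is unnecessary. Your concluding case analysis over the point list of \ref{listofpoints2} is correct but longer than needed: once $\ann_R M=K$ with $K$ non-finitely-generated, \ref{listofpoints2} already says $M$ is $T_0$-equivalent to $N(K,\J(R))$, with no need to inspect the corner points or the $\perp$-points separately.
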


\begin{proof}
Take $\lambda\in I_1\backslash K$ where $I_1$ is the smallest two-sided ideal strictly containing $K$. Then $N(K,\J(R))\in\left(x=x/x\lambda=0\right)$ since $\lambda\notin \ann_R N(\J(R),K)=K$. Take $a,\mu\in R$ such that $a\mu\in K\backslash I_2$ where $I_2$ is the largest two-sided ideal strictly contained in $K$, $\mu\in K$ and $a\in \J(R)$ (we can do this since $K$ is not finitely generated and hence $\J(R)K=K$). Then $N(\J(R),K)\in \left(x\mu=0/a|x\right)$.

If $N\in \left(x=x/x\lambda=0\right)$ then $\lambda\notin\ann_RN$. So $I_1\supseteq \lambda R\supsetneq \ann_RN$. If $N\in \left(x\mu=0/a|x\right)$ then $a\mu\in \ann_RN$. So $\ann_RN\supsetneq I_2$. Thus, if $N\in \left(x=x/x\lambda=0\right)\cap \left(x\mu=0/a|x\right)$ then $\ann_RN=K$.

Since $K$ is not finitely generated, the only point, up to topological indistinguishability, with annihilator $K$ is $N(K,\J(R))$. So we have shown that $N(K,\J(R))$ is isolated in $\Zg_R/T_0$.
\end{proof}

\begin{lemma}\label{closureofJ0and0J}
Let $R$ be a rank one uniserial domain. The closure, inside $\Zg_R/T_0$ of
\begin{enumerate}
\item $N(\J(R),0)$ is $\{N(0,0),N(\J(R),0)\}$
\item $N(0,\J(R))$ is $\{N(0,0),N(0,\J(R))\}$
\end{enumerate}
\end{lemma}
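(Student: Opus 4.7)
The plan is to prove part (1) directly; part (2) follows by the symmetric argument, swapping $\Div$ with $\Ass$ and generic with critical throughout.

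For the inclusion $\supseteq$ in (1), $N(\J(R),0)$ is trivially in its own closure. For $N(0,0)\in \cl\{N(\J(R),0)\}$ I apply \ref{subbasis}: since $\ann N(0,0)=\Ass N(0,0)=\Div N(0,0)=0$, inspection of the four sub-basic shapes shows that the sub-basic opens of $\Zg_R$ containing $N(0,0)$ are exactly the sets $(x=x/xa=0)$ with $a\neq 0$. Each of these also contains $N(\J(R),0)$ because $\ann N(\J(R),0)=0$, so every basic open around $N(0,0)$ contains $N(\J(R),0)$, and the claim follows.

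For $\subseteq$, work in $\Zg_R/T_0$ and let $M\in\cl\{N(\J(R),0)\}$. The lemma preceding this one records that the generic type of $N(\J(R),0)$ is $\langle 0,0\rangle$, so $\Div N(\J(R),0)=0$; combining this with \ref{closure} gives $\Div M=0$. Any non-zero $r\in \ann M$ annihilates all of $M$ and hence divides only $0$, so $r\in \Div M$, forcing $\ann M=0$. As $\Ass M$ is a completely prime two-sided ideal and $R$ has rank one, $\Ass M\in\{0,\J(R)\}$. Therefore the generic type of $M$ is $\langle 0,0\rangle$ and the critical type is $\langle \Ass M,0\rangle\in\{\langle 0,0\rangle,\langle \J(R),0\rangle\}$. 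By Corollary \ref{mainclosureclaim}, either $M\equiv M_{\text{gen}}=N(0,0)$, or $M\equiv M_{\text{crit}}=N(\Ass M,0)$, or $M$ has a minimal pair $xb=0/a|x$ with $a,b\notin \ann M$. The third alternative is impossible: $a\notin \ann M=0$ requires $a\neq 0$, but $a\in \Div M=0$ would then be a contradiction. So $M\equiv N(0,0)$ or $M\equiv N(\J(R),0)$. Elementary equivalence coincides with topological indistinguishability on indecomposable pure-injectives (one direction is the proposition near the start of the excerpt, the other is immediate from Baur--Monk), so $[M]\in\{[N(0,0)],[N(\J(R),0)]\}$.

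The subtle step is the cascade $\Div M=0\Rightarrow \ann M=0\Rightarrow$ generic type $\langle 0,0\rangle$, together with the exclusion of the minimal-pair alternative; once those are in hand the conclusion is direct bookkeeping via \ref{mainclosureclaim} and the descriptions of the named generic and critical modules.
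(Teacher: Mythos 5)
Your proof is correct, and its $\subseteq$ direction follows essentially the same route as the paper's: apply \ref{closure} to get $\Div M=0$, observe $\ann_R M\subseteq \Div M$ (since a nonzero $r$ with $mr=0$ for all $m$ cannot divide any nonzero $m$), so $\ann_R M=0$, and $\Ass M\in\{0,\J(R)\}$ by rank one. Where the paper's proof ends with the bare assertion ``If $\Ass N=0$ then $N=N(0,0)$'', you actually justify that step by invoking \ref{mainclosureclaim} and excluding the minimal-pair alternative; this is a genuine gap in the paper's exposition that you fill correctly. One small imprecision there: \ref{mainclosureclaim} only asserts the existence of a minimal pair, not one of the form $xb=0/a|x$ with $a,b\notin\ann_RM$ --- that refinement is what the proof of \ref{pointsuptotopind} supplies --- but your conclusion is unaffected once that is invoked. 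For $\supseteq$, you run a direct sub-basis computation via \ref{subbasis}, whereas the paper's intended argument (it is left implicit here, but spelled out in the proof of \ref{topnearsim}) is that $\langle 0,0\rangle$ is the generic pair of $\langle\J(R),0\rangle$ and, as established right after the definitions of $\wp_{e\text{-gen}}$ and $\wp_{e\text{-crit}}$, the generic module always lies in the irreducible closed set (here $\cl N(\J(R),0)$). Both routes work; yours is more hands-on, theirs is a one-line appeal to a general fact. The claimed equivalence of topological indistinguishability and elementary equivalence on $\pinj_R$, which you use to pass from $\equiv$ to the $T_0$-quotient, is correctly justified by the early proposition plus Baur--Monk.
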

\begin{proof}
(1) Since $R$ is a domain, $\Div N(\J(R),0)=0$. So if $N$ is in the closure of $N(\J(R),0)$ then $\Div N=0$. Thus $\ann_R N=0$. If $\Ass N=0$ then $N=N(0,0)$. If $\Ass N=\J(R)$ then $N=N(\J(R),0)$. (2) follows by symmetry.
\end{proof}

\begin{lemma}\label{generalclosureofinfset}
Let $R$ be a rank one uniserial domain. Let $(N_n)_{n\in\N}$ be a sequence of points in $\Zg_R/T_0$ such that $\bigcap_{i=1}^\infty\ann_RN_i=\{0\}$ and for all $i\in\N$, $\Ass N_i=\J(R)$ and $\Div N_i=\J(R)$. The points $N(0,0), N(\J(R),0)$ and $N(0,\J(R))$ are in the closure of $\{N_i \st i\in\N\}$.
\end{lemma}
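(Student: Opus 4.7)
The plan is to work entirely in $\Zg_R$ and use that, since the quotient map $\Zg_R\to\Zg_R/T_0$ is open and continuous (every open subset of $\Zg_R$ is automatically saturated under topological indistinguishability), a point $[N]$ lies in the closure of $\{[N_i]\}_{i\in\N}$ in $\Zg_R/T_0$ if and only if every open neighbourhood of $N$ in $\Zg_R$ contains some $N_i$. By Lemma \ref{subbasis} it then suffices to check this for every finite intersection of sub-basic open sets containing each of $N(0,0)$, $N(\J(R),0)$ and $N(0,\J(R))$.

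The first step is to determine which sub-basic sets contain each of the three target points. Using $\ann_R N(0,0)=\Ass N(0,0)=\Div N(0,0)=0$ together with the characterisations in Lemma \ref{subbasis}, the non-trivial sub-basic sets containing $N(0,0)$ are exactly those of the form $\left(x=x/xa=0\right)$ with $a\neq 0$. For $N(\J(R),0)$, where $\ann_R=\Div=0$ and $\Ass=\J(R)$, these are $\left(x=x/xa=0\right)$ for $a\neq 0$ together with $\left(xa=0/x=0\right)$ for $a\in\J(R)\setminus\{0\}$; a short case analysis shows that every $\left(xb=0/a|x\right)$ containing $N(\J(R),0)$ is already of the latter form. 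Symmetrically, the sub-basic sets containing $N(0,\J(R))$ are $\left(x=x/xa=0\right)$ for $a\neq 0$ together with $\left(x=x/a|x\right)$ for $a\in\J(R)\setminus\{0\}$.

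The second step exploits the hypotheses $\Ass N_i=\Div N_i=\J(R)$: by Lemma \ref{subbasis} every $N_i$ automatically belongs to each sub-basic set of the form $\left(xa=0/x=0\right)$ or $\left(x=x/a|x\right)$ with $a\in\J(R)\setminus\{0\}$. Consequently, showing that a basic open neighbourhood of any of the three target points meets $\{N_i\}$ reduces to the following claim: for any finite set of nonzero elements $a_1,\dots,a_n\in R$, there exists an index $i$ with $a_j\notin\ann_R N_i$ for every $j$.

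To verify the claim, I use that $R$ is uniserial, so the right ideals $a_1R,\dots,a_nR$ are totally ordered by inclusion. After relabelling, $a_nR\subseteq a_jR$ for every $j$, so $a_n=a_jr_j$ for some $r_j\in R$. The hypothesis $\bigcap_i\ann_R N_i=\{0\}$ applied to $a_n\neq 0$ yields an $i$ with $a_n\notin\ann_R N_i$, and since $\ann_R N_i$ is a two-sided ideal, $a_j\in\ann_R N_i$ would force $a_n=a_jr_j\in\ann_R N_i$, a contradiction. The main obstacle is the somewhat fiddly case analysis in step one, especially ruling out new neighbourhoods coming from the fourth type of sub-basic set $\left(xb=0/a|x\right)$ in the case of $N(\J(R),0)$ and $N(0,\J(R))$; once that bookkeeping is in place, the final ideal-theoretic step is immediate from uniserility.
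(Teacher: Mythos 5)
Your proof is correct and takes essentially the same route as the paper: reduce to the sub-basic open sets of Lemma~\ref{subbasis}, use the identities $\ann_R$, $\Ass$, $\Div$ to see which sub-basic sets contain each target point, and then observe that every $N_i$ lies in the sub-basic sets of type $\left(xa=0/x=0\right)$ and $\left(x=x/a|x\right)$ (with $a\in\J(R)$), while for $\left(x=x/xa=0\right)$ one uses $\bigcap_i\ann_RN_i=0$.

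One small thing you do better than the paper's own argument: the paper asserts ``it is enough to check this property for open sets in the subbasis,'' which taken literally is false for an arbitrary sub-basis --- membership of a point in $\cl S$ requires checking every \emph{basic} open neighbourhood, i.e.\ finite intersections of sub-basic sets. The paper's argument still works because for three of the four sub-basic types all the $N_i$ lie in the relevant sets, so the only real constraint is from finitely many sets of the form $\left(x=x/xa_j=0\right)$; but the paper does not address this reduction explicitly. You close that gap cleanly by observing that the $a_jR$ are totally ordered in the uniserial ring, picking the smallest, and using that $\ann_RN_i$ is a right ideal. (A trivial further difference: the paper derives that $N(0,0)$ is in the closure from Lemma~\ref{closureofJ0and0J} after handling $N(\J(R),0)$ and $N(0,\J(R))$, whereas you handle all three directly. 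Both are fine.)
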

\begin{proof}
We need to show that if $N(0,\J(R))\in  U$ an open set then there exists $i\in\N$ such that $N_i\in  U$. It is enough to check this property for open sets in $U$ in the subbasis described in \ref{subbasis}.

Let $a,b,c,d,e\in R$. If $N(0,\J(R))\in\left(x=x/c|x\right)$ then $c\in \J(R)$. So $N_i\in \left(x=x/c|x\right)$ for all $i\in\N$ since $\Div N_i=\J(R)$. If $N(0,\J(R))\in (xd=0/x=0)$ then $d=0$. So $N_i\in \left(xd=0/x=0\right)$ for all $i\in\N$. If $N(0,\J(R))\in\left(xb=0/a|x\right)$ then $b=0$ and $a\in \J(R)$. Thus $N_i\in\left(xb=0/a|x\right)$ for all $i\in\N$ since $\Div N_i=\J(R)$. Finally, $N(0,\J(R))\in \left(x=x/xe=0\right)$ if and only if $e\neq 0$. Thus there exists $n\in\N$ such that $e\notin \ann_RN_n$ since $\bigcap_{i=1}^\infty\ann_RN_i=\{0\}$. Hence $N_n\in \left(x=x/xe=0\right)$. Therefore, the closure of a set of points $N_i$ as in the statement contains $N(\J(R),0)$. Symmetrically, one can show that the closure also contains $N(\J(R),0)$. By \ref{closureofJ0and0J}, the closure also contains $N(0,0)$.
\end{proof}

\subsection{Type $C_0$}

Throughout this subsection, let $R$ be an exceptional rank one uniserial domain of type $C_0$.

Using \ref{listofpoints2} and the fact that $R$ has no two-sided ideals which are finitely generated as a right ideal, we know that the points in $\Zg_R$ up to topological indistinguishability are $N(0,0)$, $N(\J(R),\J(R))$, $N(0,\J(R))$, $N(\J(R),0)$ and $X_n:=N(\J(R),Q^n)\equiv N(Q^n,\J(R))$ for $n\in\N$.

\begin{lemma}\label{XnclosedandisolatedC0}
The point $X_n$ is closed and isolated in $\Zg_R/T_0$.
\end{lemma}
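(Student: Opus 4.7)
The strategy has two parts: derive isolation directly from \ref{nonfgannpointsiso}, and obtain closedness by enumerating the possible elements of $\cl\{X_n\}$ using \ref{closure} and \ref{listofpoints2}, then ruling each candidate out via isolation.

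For isolation, recall that in type $C_0$ the chain of two-sided ideals is $R \supsetneq \J(R) \supsetneq Q \supsetneq Q^2 \supsetneq \cdots$ and no non-zero right ideal of $R$ is finitely generated; in particular $Q^n$ is a non-zero, non-finitely-generated two-sided ideal for every $n$. Hence \ref{nonfgannpointsiso} applies and immediately shows $X_n = N(\J(R),Q^n) \equiv N(Q^n,\J(R))$ is isolated in $\Zg_R/T_0$.

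For closedness, first observe that $\Ass X_n = \Div X_n = \J(R)$ and $\ann_R X_n = Q^n$; this is read off from the lemma at the start of this subsection describing the critical and generic types of $N(K,\J(R))$ and $N(\J(R),K)$ for non-zero two-sided $K$. Now suppose $N \in \cl\{X_n\}$ in $\Zg_R/T_0$. By \ref{closure}, $\Ass N \subseteq \J(R)$, $\Div N \subseteq \J(R)$ and $\ann_R N \supseteq Q^n$. Running through \ref{listofpoints2}, the only candidates for $N$ (up to topological indistinguishability) are $N(\J(R),\J(R))$ and $X_m$ for $1 \leq m \leq n$; the third category of points $N(bR,Ra)$ with $abR = Rab$ is vacuous in type $C_0$, because no non-zero principal right ideal of $R$ is two-sided, which would force $ab = 0$.

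Finally, each of these candidates other than $X_n$ itself is isolated in $\Zg_R/T_0$: $N(\J(R),\J(R))$ by \ref{00andJJclosed}, and $X_m$ for $m < n$ by \ref{nonfgannpointsiso}. Moreover, each is topologically distinguishable from $X_n$, since its annihilator differs from $Q^n$, so some subbasic open $\left(x=x/xa=0\right)$ from \ref{subbasis} separates them. An isolated point distinct from $X_n$ cannot lie in $\cl\{X_n\}$, so $\cl\{X_n\} = \{X_n\}$ in $\Zg_R/T_0$. I expect the only mildly delicate step to be confirming vacuity of the $N(bR,Ra)$ case in type $C_0$; everything else is direct bookkeeping against the previously established lemmas.
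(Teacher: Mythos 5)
Your proof is correct and takes essentially the same approach as the paper: isolation follows directly from \ref{nonfgannpointsiso} since $Q^n$ is a non-finitely-generated nonzero ideal, and closedness is established by using \ref{closure} to constrain $\ann_R N$ for any $N\in\cl\{X_n\}$ and then ruling out the remaining candidates ($N(\J(R),\J(R))$ and $X_m$, $m\neq n$) because each is isolated. The only cosmetic difference is that you re-derive the vacuity of the $N(bR,Ra)$ case in type $C_0$, which the paper has already established in the paragraph immediately preceding this lemma.
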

\begin{proof}
That $X_n$ is isolated is a direct consequence of \ref{nonfgannpointsiso}.
%
%

If $N$ is in the closure of $X_n$ then $\ann_RN\supseteq Q^n$ by \ref{closure}. Thus $(\J(R),0), (0,\J(R))$ and $(0,0)$ are not in the closure of $X_n$. Since each $X_m$ is isolated, $X_m$ is not in the closure of $X_n$ for all $n\neq m$. Since $(\J(R),\J(R))$ is isolated, \ref{00andJJclosed}, $(\J(R),\J(R))$ is not in the closure of $X_n$. Thus the only point in the closure of $X_n$ is $X_n$ itself.
\end{proof}

\begin{proof}[proof of theorem \ref{typeC0}]
Suppose $C$ is a closed subset of $\Zg_R/T_0$. By \ref{closureofJ0and0J}, if $N(\J(R),0)\in C$ or $N(0,\J(R))\in C$ then $N(0,0)\in C$. So (a) of \ref{typeC0} holds. If $X_n\in C$ for infinitely many $n\in\N$ then since $\ann_RX_n=Q^n$, $\Ass X_n=\Div X_n=\J(R)$ and $\bigcap_{i=1}^\infty Q^n=0$, \ref{generalclosureofinfset} implies $N(\J(R),0),N(0,\J(R)),N(0,0)\in C$. Thus (b) of \ref{typeC0} also holds.

Now suppose that $C\subseteq \Zg_R/T_0$ satisfies (a) and (b). First note that, by \ref{closureofJ0and0J}, \ref{00andJJclosed} and \ref{XnclosedandisolatedC0}, $C$ is closed under specialisation. We intend to show that $C$ is equal to its closure.

Suppose $N(\J(R),0)\notin C$. Since (b) holds, $\{X_n \st X_n\in C\}$ is finite. So $C$ is finite. Since $C$ is closed under specialisation, $C=\cl C$. The same argument shows that if $N(0,\J(R))\notin C$ or $N(0,0)\notin C$ then $C=\cl C$.

Now suppose that $N(\J(R),0), N(0,\J(R)), N(0,0)\in C$. Since $N(\J(R),\J(R))$ is isolated, $N(\J(R),\J(R))\in C$ if and only if $N(\J(R),\J(R))\in\cl C$. Since $X_n$ is isolated for each $n\in\N$, $X_n\in C$ if and only if $X_n\in \cl C$. Thus $C=\cl C$.

\end{proof}

\subsection{Type $C_k$ for $k>1$}

Using \ref{listofpoints2}, we know that the points in $\Zg_R$ up to topological indistinguishability are $N(0,0), N(\J(R),\J(R)), N(0,\J(R)), N(\J(R),0)$, $X_n:=N(\J(R),Q^n)\equiv N(Q^n,\J(R))$, $Z_n:=N(z^nR,\J(R))\equiv N(\J(R),Rz^n)$ and $Y_{n}:=N(bR,Ra)$ where $ab=z^n$ for $n\in\N$. By \ref{JznRRznJexists}, $(\J(R),Rz^{n})\bot(z^{n}R,\J(R))$ is $Y_{n}$.

\begin{lemma}\label{XnisolatedCn}
The points $X_n$ and $Y_n$ are isolated in $\Zg_R/T_0$ for all $n\in\N$.
\end{lemma}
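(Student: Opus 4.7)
The plan is to reduce both claims directly to the two previously established isolation lemmas \ref{nonfgannpointsiso} and \ref{pointwith2prinidealsisolated}. What needs to be checked is simply that the hypotheses of those lemmas are satisfied in the present setting, and this is a matter of reading off properties from the chain of ideals of a type $C_k$ ring with $k>1$.

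For $X_n=N(\J(R),Q^n)\equiv N(Q^n,\J(R))$, the plan is to invoke \ref{nonfgannpointsiso}, which asks that $Q^n$ be a non-zero two-sided ideal that is not finitely generated. Non-vanishing is immediate from the strict descending chain $Q\supsetneq Q^2\supsetneq\ldots$ together with $\bigcap_m Q^m=0$. For non-finite-generation, since $R$ is uniserial any finitely generated right ideal is principal, and by \ref{idealfgasleftideal} the principal two-sided ideals in $R$ are exactly $R$ together with the $z^mR$ for $m\ge 1$. The description of the chain in type $C_k$ shows that each $Q^n$ lies strictly between consecutive principal ideals in this list (with $Q^{mk}=z^m\J(R)\subsetneq z^mR$ in particular), so $Q^n$ is never of the form $z^mR$. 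Hence \ref{nonfgannpointsiso} applies and $X_n$ is isolated in $\Zg_R/T_0$.

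For $Y_n=N(bR,Ra)$ with $ab=z^n$, the plan is to invoke \ref{pointwith2prinidealsisolated}, which needs $\J(R)^2=\J(R)$ and $z^nR=Rz^n$. The identity $zR=Rz$ is part of the classification of $z$ as a generator of the groupoid of principal ideals, and then $z^nR=(zR)^n$ is a product of two-sided ideals, so two-sided, hence equal to $Rz^n$ by \ref{idealfgasleftideal}. For $\J(R)^2=\J(R)$, the argument is that $\J(R)$ is not principal: if $\J(R)=z^mR$ then $m\ge 1$, but the chain for $k>1$ gives $\J(R)\supsetneq Q\supseteq Q^{k-1}\supsetneq zR$, so $\J(R)\supsetneq zR=z^1R$, which (combined with $\J(R)=z^mR$) would force $m=0$, contradicting $\J(R)\ne R$. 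Once $\J(R)$ is known to be non-principal, for any $r\in\J(R)$ the total ordering of right ideals supplies $r'\in\J(R)$ with $rR\subsetneq r'R\subsetneq\J(R)$, so $r=r's$ with $s\in\J(R)$, giving $r\in\J(R)^2$. Hence \ref{pointwith2prinidealsisolated} applies and $Y_n$ is isolated in $\Zg_R$, a fortiori in $\Zg_R/T_0$.

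Essentially no obstacle arises, since both lemmas do all the work; the only mildly subtle point is confirming that none of the $Q^n$ can coincide with a principal two-sided ideal in type $C_k$, and this is handled purely by reading the chain of ideals given in the type $C_k$ classification.
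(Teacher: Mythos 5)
Your proof is correct and takes exactly the same route as the paper: the paper's own proof is the one-line observation that $X_n$ falls under Lemma~\ref{nonfgannpointsiso} and $Y_n$ under Lemma~\ref{pointwith2prinidealsisolated}. What you add is the verification that the hypotheses of those two lemmas actually hold in type $C_k$ with $k>1$ (that $Q^n$ is never principal hence never finitely generated; that $\J(R)$ is non-principal, hence idempotent; that $z^nR=Rz^n$), all of which the paper leaves implicit because it regards these as immediate consequences of the chain of ideals given in the Brungs--Dubrovin classification. One small imprecision: you attribute the fact that the principal two-sided ideals are exactly $R$ and the $z^mR$ to Lemma~\ref{idealfgasleftideal}, but that lemma only gives $aR=Ra$ when $aR$ is two-sided; the enumeration of principal ideals comes from the classification (from $\widehat{Q^k}=zR$ generating the groupoid of principal ideals), which your next sentence in effect uses anyway.
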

\begin{proof}
The point $X_n$ is isolated by \ref{nonfgannpointsiso} and the point $Y_n$ is isolated by \ref{pointwith2prinidealsisolated}.
%
%

\end{proof}

\begin{lemma}\label{xknynznopen}
The set $\{X_{kn},Y_n,Z_n\}$ is open for all $n,k\in \N$.
\end{lemma}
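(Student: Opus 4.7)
The plan is to exhibit a concrete basic open set of $\Zg_R$ whose intersection with $\Zg_R/T_0$ equals $\{X_{kn},Y_n,Z_n\}$. A natural candidate is
\[
V := (x=x\,/\,x\lambda=0) \;\cap\; (xz^nc=0\,/\,z^n\mid x),
\]
where $\lambda\in Q^{kn-1}\setminus z^nR$ and $c\in\J(R)\setminus Q$ (the latter exists because $\J(R)\supsetneq Q$ in type $C_k$ with $k>1$). The first factor, by the argument of \ref{pointwith2prinidealsisolated}, selects those $N$ with $\ann N\subseteq z^nR$, since $\lambda$ sits immediately above $z^nR$ in the totally ordered two-sided ideal chain.

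The second factor asks for some $m\in N$ with $mz^nc=0$ and $z^n\nmid m$. The second condition forces $z^n\in\Div N$ and hence (as $\Div N$ is a completely prime two-sided ideal and $\J(R)$ is the unique nonzero such ideal in the rank one case) $\Div N=\J(R)$. The first condition $mz^nc=0$ means $z^nc\in\ann(m)$; and since $c\in\J(R)\setminus Q$ together with the absence of two-sided ideals strictly between $\J(R)$ and $Q$ implies $RcR=\J(R)$, the two-sided ideal generated by $z^nc$ is $z^n\cdot RcR=z^n\J(R)=Q^{kn}$. The technical heart of the argument is to show from this that opening the pair forces $\ann N\supseteq Q^{kn}$, so that combined with the first factor we obtain $\ann N\in\{z^nR,Q^{kn}\}$.

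Granting this characterisation, \ref{listofpoints2} together with the descriptions of critical and generic pairs given just above it identifies the points of $\Zg_R/T_0$ with $\Div N=\J(R)$ and $\ann N\in\{z^nR,Q^{kn}\}$ as precisely $X_{kn}$ (with $\ann=Q^{kn}$), $Y_n$, and $Z_n$ (both with $\ann=z^nR$). Conversely, each of these three points lies in $V$: the generic element $m_g$ serves as a witness, because $z^nc\in Q^{kn}\subseteq\ann N$ gives $m_g z^nc=0$, while $z^n\in\J(R)=\Div N$ gives $z^n\nmid m_g$.

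The main obstacle is the converse implication just claimed, namely that a module $N$ with $\ann N$ lying strictly below $Q^{kn}$ in the ideal chain (for instance $X_m$ with $m>kn$, or $Y_m, Z_m$ with $m>n$) cannot open $(xz^nc=0\,/\,z^n\mid x)$: one must show that inside such an $N$, every element $m$ satisfying $z^nc\in\ann(m)$ is automatically divisible by $z^n$. I would attack this by analysing the possible pairs $\langle I,J\rangle$ with $N(I,J)\cong N$ via the consistency criterion \ref{Genasconsistencycondition} and the isomorphism classification \ref{equonepairs}, exploiting the explicit chain of two-sided ideals in type $C_k$ with $k>1$ recalled at the start of the section.
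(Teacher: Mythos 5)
Your first factor $\left(x=x/x\lambda=0\right)$ with $\lambda\in Q^{kn-1}\setminus z^nR$ coincides with the paper's and correctly forces $\ann_RN\subseteq z^nR$. The problem is your second factor $\left(xz^nc=0/z^n\mid x\right)$: this is not tight enough, and the ``technical heart'' you defer to is not a checkable detail but a genuine failure of the candidate open set.

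To see why, recall that over a uniserial ring the pp-definable subgroups of an indecomposable pure-injective $N$ are totally ordered, so $N\in\left(x\beta=0/\alpha\mid x\right)$ forces $N\alpha\subseteq\{m\in N:m\beta=0\}$, i.e.\ $\alpha\beta\in\ann_RN$, and therefore $\ann_RN\supseteq R\alpha\beta R$. With your choice $\alpha=z^n$, $\beta=z^nc$, the product is $z^{2n}c$; since $c\in\J(R)\setminus Q$ gives $RcR=\J(R)$, we get $Rz^{2n}cR=z^{2n}\J(R)=Q^{2kn}$. So your pair forces only $\ann_RN\supseteq Q^{2kn}$, not $\ann_RN\supseteq Q^{kn}$ as you claim; you appear to have computed the two-sided ideal generated by $z^nc$ (which is indeed $Q^{kn}$) rather than by the product $z^n\cdot z^nc$. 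Combined with the first factor this leaves $\ann_RN$ free to be any of $Q^{kn},\,Q^{kn+1},\dots,\,Q^{2kn},\,z^{n+1}R,\dots,\,z^{2n}R$, so $V$ has no reason to exclude $X_m$ for $kn<m\le 2kn$ or $Z_m$ for $n<m\le 2n$, and your proposed consistency/isomorphism analysis cannot repair this: the obstruction is intrinsic to the choice of pair.

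The paper's fix is precisely to avoid squaring $z^n$. Using $\J(R)^2=\J(R)$, write $z^n=ab$ with $a,b\in\J(R)$, and choose $c,\mu\in\J(R)$ so that $cz^n\mu\in z^n\J(R)\setminus Q^{kn+1}$; the second factor is then $\left(xb\mu=0/ca\mid x\right)$, whose product $ca\cdot b\mu=cz^n\mu$ lands exactly in $Q^{kn}\setminus Q^{kn+1}$. This forces $\ann_RN\supseteq Q^{kn}$, and together with $\ann_RN\subseteq z^nR$ pins down $\ann_RN\in\{z^nR,Q^{kn}\}$, after which the classification of points by annihilator identifies $V$ with $\{X_{kn},Y_n,Z_n\}$. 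You should also verify, as the paper does, that each of the three target points actually lies in $\left(xb\mu=0/ca\mid x\right)$, using the representations $N(\J(R),Rz^n)=N(b\J(R),Ra)$ and $N(\J(R),Q^{kn})=N(b\J(R),\J(R)a)$; this step does not follow from the generic-element argument you gave, which only works because for those three modules $z^nc$ already lies in the full annihilator.
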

\begin{proof}
 Note that both $Y_n$ and $Z_n$ have annihilator $z^nR=Rz^n$ and $X_{kn}$ has annihilator $Q^{kn}=z^n\J(R)$. Take $\lambda\in Q^{kn-1}\backslash z^nR$. Then $N\in \left(x=x/x\lambda=0\right)$ if and only if $\lambda\notin \ann_RN$ if and only if $Q^{kn-1}\supsetneq \ann_RN$.  Thus $X_{kn},Y_n,Z_n\in \left(x=x/x\lambda=0\right)$.

Let $a,b\in \J(R)$ be such that $ab=z^n$. Take $c,\mu\in \J(R)$ such that $cz^n\mu\in z^n\J(R)\backslash Q^{kn+1}$. We can do this because $z^n\J(R)=\J(R)z^n$ and $\J(R)^2=\J(R)$. Since $N(\J(R),Rz^n)=N(b\J(R),Ra)$, $Z_n\in \left(xb\mu=0/ca|x\right)$. It is clear that $Y_n\in \left(xb\mu=0/ca|x\right)$. Since $Q^{kn}=\J(R)z^n$, $N(\J(R),Q^{kn})=N(b\J(R),\J(R)a)$ and so $X_{kn}\in \left(xb\mu=0/ca|x\right)$.

Moreover, if $N\in\left(xb\mu=0/ca|x\right)$ then $cz^n\mu\in\ann_RN$. So $\ann_RN\supsetneq Q^{kn+1}$.

Thus if $N\in \left(x=x/x\lambda=0\right)\cap\left(xb\mu=0/ca|x\right)$ then $Q^{kn-1}\supsetneq\ann_RN\supsetneq Q^{kn+1}$. Thus $\ann_RN= z^nR$ or $\ann_RN=z^n\J(R)=Q^{kn}$. Thus $\left(x=x/x\lambda=0\right)\cap\left(xb\mu=0/ca|x\right)=\{X_{kn},Y_n,Z_n\}$.


\end{proof}

\begin{lemma}\label{clXkn}
The closure of $X_{kn}$ is $\{X_{kn},Z_n\}$.
\end{lemma}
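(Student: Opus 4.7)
My plan is to establish the two containments $\cl X_{kn}\subseteq \{X_{kn},Z_n\}$ and $\{X_{kn},Z_n\}\subseteq \cl X_{kn}$ separately.

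For the first containment, I would invoke Corollary~\ref{closure}, which forces any $N\in \cl X_{kn}$ to satisfy $\Ass N, \Div N \subseteq \J(R)$ and $\ann_R N\supseteq Q^{kn}$. Matching these invariants against the enumeration in \ref{listofpoints2}, the only candidates other than $X_{kn}$ itself are $N(\J(R),\J(R))$, the modules $X_m$ for $m\leq kn$, the $Y_m$ for $m\leq n$, and the $Z_m$ for $m\leq n$. The first three families are isolated in $\Zg_R/T_0$ by \ref{00andJJclosed} and \ref{XnisolatedCn}, so they lie in $\cl X_{kn}$ only if they equal $X_{kn}$. For $Z_m$ with $m<n$, the open set $\{X_{km},Y_m,Z_m\}$ supplied by \ref{xknynznopen} contains $Z_m$ but not $X_{kn}$, ruling $Z_m$ out.

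For the reverse containment, $X_{kn}\in \cl X_{kn}$ is automatic. To prove $Z_n\in \cl X_{kn}$, I would check that every sub-basic open from \ref{subbasis} containing $Z_n$ also contains $X_{kn}$. Three of the four sub-basis families depend only on $\ann$, $\Ass$, and $\Div$, so they follow immediately from $\ann Z_n=z^nR\supseteq Q^{kn}=\ann X_{kn}$ together with the common values $\Ass Z_n=\Div Z_n=\Ass X_{kn}=\Div X_{kn}=\J(R)$.

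The main obstacle will be the last sub-basis family $(xb=0/a|x)$. Since both the critical module $(Z_n)_{\text{crit}}$ and the generic module $(Z_n)_{\text{gen}}$ coincide with $Z_n$ itself, any witness to opening this pair in $Z_n$ must come from a pp-type whose hull is $Z_n$. Using \ref{desccrit}, \ref{descgen}, and the consistency criterion \ref{Genasconsistencycondition}, I would argue that the existence of such a witness forces $ab\in Q^{kn}$; in that case the inequality $(a|x)\leq (xb=0)$ also holds in $X_{kn}$, and I would then produce the required strict opening in $X_{kn}$ by exhibiting an element whose annihilator equals an intermediate ideal such as $bR$ or $z^nR$. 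Such an element is available in $X_{kn}$ but not in $Z_n$ because $\ann X_{kn}=Q^{kn}$ admits a finer chain of realized pp-types than $\ann Z_n=z^nR$ does.
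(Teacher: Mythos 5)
Your overall strategy is a genuine alternative to the paper's: you reduce to the sub-basis of Lemma~\ref{subbasis}, whereas the paper builds a custom basis of neighbourhoods of $Z_n$ of the form $(xa{=}0\wedge b|x/xc{=}0+d|x)$ with $a\in z^nR$, $b\notin\J(R)$, $c\notin z^nR$, $d\in\J(R)$, and then reduces the hard part to $(xz^n{=}0/d|x)$ with $d\in\J(R)$. Both reductions are legitimate, and your first containment (using \ref{closure}, the isolation results \ref{00andJJclosed}, \ref{XnisolatedCn}, and the open set $\{X_{km},Y_m,Z_m\}$) matches the paper's argument essentially verbatim and is fine. Your claim that $Z_n\in(xb{=}0/a|x)$ forces $ab\in Q^{kn}$ (rather than merely $ab\in z^nR=\ann Z_n$, which is what strictness and total orderedness give directly) is in fact correct, but it takes a real argument: one must observe that the witnessing element realises a pp-type $\langle I,J\rangle$ with $b\in I$, $a\in J$ and $N(I,J)\cong Z_n$, and then run through both cases of \ref{equonepairs} to see that $ab\in\J(R)z^nR=Q^{kn}$; you do not spell this out.

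The genuine gap is in the final step, which is precisely where the paper does its work. After establishing $ab\in Q^{kn}$, you still need to produce an actual element of $X_{kn}$ killed by $b$ and not divisible by $a$ (with $a,b\in\J(R)\setminus Q^{kn}$ the nontrivial case). Your proposed tactic — ``exhibiting an element whose annihilator equals an intermediate ideal such as $bR$ or $z^nR$'' — cannot work as stated: the annihilators of elements of $X_{kn}=N(Q^{kn},\J(R))$ are, by \ref{equonepairs}, right ideals $I$ with either $I=uQ^{kn}$ or $vI=Q^{kn}$, and none of these are finitely generated (since $Q^{kn}$ is not); in particular no element of $X_{kn}$ has annihilator $bR$ or $z^nR$. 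The closing remark about $Q^{kn}$ admitting ``a finer chain of realized pp-types'' than $z^nR$ is a heuristic, not a proof. The paper closes this gap with a concrete construction: it takes $m$ realising $\langle z^n\J(R),\J(R)\rangle$ in $X_{kn}$, chooses $\lambda\in\J(R)\setminus(Q\cup Rd)$ (exploiting that $Q\subsetneq\J(R)$ in the exceptional case), and shows $m\lambda$ opens the pair by a total-orderedness argument with the formula $x\lambda{=}0+\delta|x$. Some argument of this flavour is unavoidable, and your proposal does not supply one.
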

\begin{proof}
In order to check that $Z_n$ is in the closure of $X_{kn}$ it is enough to check for $(U_i)_{i\in I}$ an open neighbourhood basis of $Z_n$, $X_{kn}\in U_i$ for all $i\in I$.

Since all pp-$1$-formulae are equivalent to a formula of the form $\sum_{i=1}^nxa_i=0\wedge b_i|x$ for some $a_i,b_i\in R$ and $n\in\N$, Applying Prest's duality \cite[chapter 8]{Mikebook1}, this means all pp-$1$-formulae are equivalent to a formula of the form $\bigwedge_{i=1}^nxc_i=0+d_i|x$. Since the pp-definable subgroups of any $N\in\Zg_R$ are totally ordered by inclusion, \cite[4.9]{Zieglermodules} implies that the open sets $\left(xa=0\wedge b|x/xc=0+d|x\right)$, where $a\in z^nR$, $b\notin \J(R)$, $c\notin z^nR$ and $d\in\J(R)$, are a basis of open neighbourhoods for $Z_n$.

Suppose $a\in z^nR$, $b\notin \J(R)$, $c\notin z^nR$ and $d\in\J(R)$, then
\[\left(xa=0\wedge b|x/xc=0+d|x\right)=\left(xa=0/xc=0+d|x\right)=\left(xa=0/xc=0\right)\cap\left(xa=0/d|x\right).\]

Since $a\in z^nR$ and $c\notin z^nR$ there exists $r\in \J(R)$ such that $cr=a$ and $N\in\left(xa=0/xc=0\right)$ if and only if $r\in\Ass N$ and $c\notin \ann_RN$. Since $\Ass X_{kn}=\Ass Z_n$ and $\ann_R X_{kn}\subsetneq \ann_R Z_n$, $X_{kn}\in \left(xa=0/xc=0\right)$.

We now show that $X_{kn}\in \left(xa=0/d|x\right)$. If $a\in z^n\J(R)$ then we are done since $d\in \J(R)$. Otherwise $a=z^n\mu$ for some $\mu\notin\J(R)$ and hence $xa=0$ is equivalent to $xz^n=0$. So we need to show that $Z_{kn}\in \left(xz^n=0/d|x\right)$ for all $d\in \J(R)$.

Take $\lambda\in \J(R)\backslash (Q\cup Rd)$ and let $m$ realise $\langle z^n\J(R),\J(R)\rangle$ in $X_{kn}$. Then $m\lambda z^n=0$ since $\lambda\in \J(R)$ and hence $\lambda z^n\in z^n\J(R)=\J(R)z^n$. Since $\lambda\notin Rd$ there exists $\delta\in \J(R)$ such that $d=\delta \lambda$. If $d|m\lambda$ then $m$ is in the pp-definable subgroup defined by $x\lambda=0+\delta|x$. Since the pp-definable subgroups of $X_{kn}$ are totally ordered, this would mean that $m\lambda=0$ or $\delta|m$. This is false because $\lambda\notin z^n\J(R)\subseteq Q$ and $\delta\notin \J(R)$. Thus $m\lambda$ opens $xz^n=0/d|x$. So $X_{kn}\in \left(xz^n=0/d|x\right)$.

Thus we have shown that $Z_n$ is in the closure of $X_{kn}$.

%
%

We now show that only $X_{kn}$ and $Z_n$ are in the closure of $X_{kn}$. Since for each $m\in\N$, $Y_m$ is isolated, $Y_m$ is not in the closure of $X_{kn}$. Since for all $kn\neq m\in\N$, $X_m$ is isolated, $X_m$ is not in the closure of $X_{kn}$. Since $N(\J(R),\J(R))$ is isolated, $N(\J(R),\J(R))$ is not in the closure of $X_{kn}$.

Since for all $m\in\N$, $\{X_{km},Y_m,Z_m\}$ is open, $Z_m$ is not in the closure of $X_{kn}$ unless $m=kn$.

Finally note that $N(0,0)$ is not in the closure of $X_{kn}$ since for all $\lambda\neq 0$, $N(0,0)\in \left(x=x/x\lambda=0\right)$. Thus $N(\J(R),0)$ and $N(0,\J(R))$ are not in the closure of $X_{kn}$.

\end{proof}

\begin{lemma}\label{clYn}
The closure of $Y_n$ is $\{Y_n,Z_n\}$.
\end{lemma}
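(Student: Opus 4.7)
The plan is to mirror the structure of the proof of Lemma \ref{clXkn} for $X_{kn}$, showing first that $Z_n \in \cl Y_n$ and then ruling out every other point.

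First I would show $Z_n \in \cl Y_n$. By \ref{JznRRznJexists}, $\ann_R Y_n = z^nR$, and since $Y_n = N(bR,Ra)$ with $ab=z^n$ realizing a type containing $e|x$, we have $\Ass Y_n = \Div Y_n = \J(R)$. Exactly as in \ref{clXkn}, using Prest duality and the fact that the pp-definable subgroups of any indecomposable pure-injective are totally ordered, the sets
\[\bigl(xa=0 \wedge b|x \,/\, xc=0 + d|x\bigr)\]
with $a\in z^nR$, $b\notin\J(R)$, $c\notin z^nR$, $d\in\J(R)$ form a basis of open neighbourhoods of $Z_n$. Such a set equals
\[\bigl(xa=0/xc=0\bigr)\cap \bigl(xa=0/d|x\bigr).\]
Since $a\in z^nR=\ann_R Y_n$, the formula $xa=0$ is equivalent to $x=x$ in $Y_n$. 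Since $c\notin z^nR=\ann_R Y_n$, there is an element of $Y_n$ not annihilated by $c$, so $Y_n\in (xa=0/xc=0)$. Since $d\in\J(R)=\Div Y_n$, there is an element of $Y_n$ not divisible by $d$, so $Y_n\in (xa=0/d|x)$. Hence $Y_n$ lies in every basic neighbourhood of $Z_n$, giving $Z_n\in\cl Y_n$.

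Next I would show no other point lies in $\cl Y_n$. By \ref{XnisolatedCn}, each $X_m$ and each $Y_m$ is isolated in $\Zg_R/T_0$, so none of them (other than $Y_n$ itself) can be in $\cl Y_n$. By \ref{00andJJclosed}, $N(\J(R),\J(R))$ is isolated, so it is not in $\cl Y_n$. For $Z_m$ with $m\neq n$, Lemma \ref{xknynznopen} tells us $\{X_{km},Y_m,Z_m\}$ is open and does not contain $Y_n$, so $Z_m\notin\cl Y_n$. Finally, for the three points $N(0,0)$, $N(\J(R),0)$, $N(0,\J(R))$, each has zero annihilator, whereas $z^n\in\ann_R Y_n$; thus the open set $(x=x/xz^n=0)$ contains all three but not $Y_n$, so none of them is in $\cl Y_n$.

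The combination of the two parts gives $\cl Y_n = \{Y_n, Z_n\}$. The only nontrivial step is the first one, where one must verify that the generic pp-pair witnesses for $Z_n$ are all opened by $Y_n$; however, since $Y_n$ shares the same $\Ass$ and $\Div$ as $Z_n$ and has strictly smaller annihilator, the argument is essentially identical to the corresponding step in \ref{clXkn} and presents no serious obstacle.
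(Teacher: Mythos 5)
Your proof is correct, but it takes a noticeably longer route than the paper for the key inclusion $Z_n\in\cl Y_n$. You redo the neighbourhood-basis argument from \ref{clXkn}, checking explicitly that $Y_n$ opens every basic neighbourhood of $Z_n$ (and in fact your version of this step is simpler than the one in \ref{clXkn}, since $a\in z^nR=\ann_R Y_n$ makes $xa=0$ equivalent to $x=x$ in $Y_n$, avoiding the witness construction that \ref{clXkn} needs). The paper instead gets $Z_n\in\cl Y_n$ for free from the general framework: $Z_n$ is both $(Y_n)_{\text{gen}}$ and $(Y_n)_{\text{crit}}$ by \ref{JznRRznJexists}, and it was established in Section \ref{soberserial} that for any $N\in\Zg_R$ the generic and critical modules of the closure of $N$ always lie in that closure. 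Your route buys self-containedness at the cost of length; the paper's route is a one-line application of the machinery it built earlier and makes clearer why the result holds. The exclusion of the remaining points is essentially identical in both proofs, though you handle $N(0,0)$, $N(\J(R),0)$, $N(0,\J(R))$ uniformly via the single open set $(x=x/xz^n=0)$, whereas the paper rules out $N(0,0)$ directly and then uses \ref{closureofJ0and0J} for the other two; both are fine.
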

\begin{proof}
Since $Y_n$ is $(\J(R),Rz^{n})\bot(z^{n}R,\J(R))$, $Z_n$ is in the closure of $Y_n$. Since for each $m$, $X_m$ is isolated, $X_m$ is not in the closure of $Y_n$. Since for each $m$, $\{X_{km},Y_m,Z_m\}$ is open $Z_m$ and $Y_m$ are not in the closure of $Y_n$ unless $m=n$.
Finally note that $N(0,0)$ is not in the closure of $Y_{n}$ since for all $\lambda\neq 0$, $N(0,0)\in \left(x=x/x\lambda=0\right)$. Thus $N(\J(R),0)$ and $N(0,\J(R))$ are not in the closure of $Y_n$.
\end{proof}

\begin{proof}[Proof of theorem \ref{typeCk}]
Suppose that $C$ is closed. Thus $C$ is closed under specialisation and hence (a), (b) and (c) hold by \ref{clXkn}, \ref{clYn} and \ref{closureofJ0and0J}. Since any infinite set of points of the form $X_n$, $Y_n$ or $Z_n$ has common annihilator zero, \ref{generalclosureofinfset} implies (d) holds.

Suppose (a),(b), (c), (d) hold for $C\subseteq Zg_R/T_0$. Properties (a), (b) and (c) imply that $C$ is closed under specialisation. We intend to show that $C$ is equal to its closure. If $N(\J(R),0)\notin C$, $N(0,\J(R))\notin C$ or $N(0,0)\notin C$ then $C$ is finite by (d). Thus $C$ is equal to its closure since $C$ is closed under specialisation. Now suppose that $N(0,\J(R)), N(\J(R),0), N(0,0)\in C$. If $N$ is isolated then $N\in C$ if and only if $N\in \cl C$. Thus we need only concern ourselves with non-isolated points. So, suppose that $Z_n\in \cl C$. Since $\{Y_n,X_{kn},Z_n\}$ is open, $C$ contains either $Z_n,Y_n$ or $X_{kn}$. Thus (a) and (b) imply $Z_n\in C$. Thus $C$ is equal to its closure and hence closed.

\end{proof}

\subsection{Type $C_1$}\label{ssTypeC1}

The only difference between the $C_1$ case and the $C_k$ case for $k>1$ is that the points are different. For this reason, most proofs will not be given.

Using \ref{listofpoints2}, we know that the points in $\Zg_R$ up to topological indistinguishability are $N(0,0)$, $N(\J(R),\J(R))$, $N(0,\J(R))$, $N(\J(R),0)$, $X_n:=N(\J(R),\J(R)z^n)\equiv N(z^n\J(R),\J(R))$, $Z_n:=N(z^nR,\J(R))\equiv N(\J(R),Rz^n)$ and $Y_{n}:=N(bR,Ra)$ where $ab=z^n$. By \ref{JznRRznJexists},  $Y_{n}$ is $(\J(R),Rz^{n})\bot(z^{n}R,\J(R))$.

\begin{lemma}
The points $X_n$ and $Y_n$ are isolated in $\Zg_R/T_0$.
\end{lemma}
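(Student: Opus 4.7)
The proof is essentially identical to that of the corresponding lemma \ref{XnisolatedCn} in the $C_k$ case for $k>1$, since the two existing isolation results \ref{nonfgannpointsiso} and \ref{pointwith2prinidealsisolated} do all the work; only the bookkeeping on which ideals appear as annihilators differs.

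For $X_n$, the plan is to apply \ref{nonfgannpointsiso} to the ideal $K = \J(R)z^n$. The generic type of $X_n = N(\J(R)z^n, \J(R))$ is $\langle \J(R)z^n, \J(R)\rangle$, so $\ann_R X_n = \J(R)z^n$. From the chain of ideals displayed for type $C_1$, the ideal $\J(R)z^n$ lies strictly between the principal ideals $z^n R$ and $z^{n+1}R$, hence is not principal. Since $R$ is uniserial, an ideal that is finitely generated as a right ideal is generated by the largest of its generators, hence principal; so $\J(R)z^n$ is not finitely generated as a right ideal. Therefore \ref{nonfgannpointsiso} applies and $X_n$ is isolated in $\Zg_R/T_0$.

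For $Y_n$, the plan is to apply \ref{pointwith2prinidealsisolated} with $w = z^n$. We have $z^n R = R z^n$ because $zR = Rz$ (the generator $z$ of $Q$ satisfies this by definition of $C_1$), and this property is preserved under taking powers. The hypothesis $\J(R)^2 = \J(R)$ of \ref{pointwith2prinidealsisolated} also holds in type $C_1$: since $\J(R) \supsetneq zR$ strictly (with no ideal in between), $\J(R)$ is not principal, hence not finitely generated as a right ideal, and in a uniserial ring a non-finitely-generated ideal $I$ must satisfy $I\cdot\J(R) = I$, giving $\J(R)^2 = \J(R)$. Given $ab = z^n$, the lemma \ref{pointwith2prinidealsisolated} then immediately gives that $Y_n = N(bR,Ra)$ is isolated even before factoring by $T_0$, hence certainly in $\Zg_R/T_0$.

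I do not anticipate any real obstacle: the only small verifications are that $\J(R)z^n$ genuinely sits strictly between two principal ideals (read off from the displayed chain for type $C_1$) and that $\J(R)^2 = \J(R)$, both of which follow from the structural description of $C_1$ rings already recorded. The rest is citation of \ref{nonfgannpointsiso} and \ref{pointwith2prinidealsisolated}, exactly as in the proof of \ref{XnisolatedCn}.
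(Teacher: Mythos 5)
Your approach matches the paper exactly: the paper's proof is precisely the two-line citation of \ref{nonfgannpointsiso} for $X_n$ and \ref{pointwith2prinidealsisolated} for $Y_n$. Your extra bookkeeping is sound, though the step ``$\J(R)\supsetneq zR$ with no ideal in between, hence $\J(R)$ is not principal'' is a little elliptical as stated — what one actually needs is the fact from the Brungs--Dubrovin classification that the principal two-sided ideals of a type $C_1$ ring are exactly the powers $z^nR$ (since $zR$ generates the groupoid of principal ideals), and $\J(R)$ is not among these; the paper treats $\J(R)^2=\J(R)$ as known throughout and does not reverify it here.
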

\begin{proof}That $X_n$ is isolated follows from \ref{nonfgannpointsiso}.
That $Y_n$ is isolated is exactly \ref{pointwith2prinidealsisolated}.

%
%
%
\end{proof}

\begin{lemma}
The set $\{X_n,Y_n,Z_n\}$ is open for all $n\in\N$.
\end{lemma}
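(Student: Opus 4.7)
The strategy will closely mirror the proof of Lemma \ref{xknynznopen} from the $C_k$, $k>1$ case, adapted to the simpler ideal chain of $C_1$. I would exhibit $\{X_n,Y_n,Z_n\}$ as the intersection of two basic open sets of the form $(x=x/x\lambda=0)$ and $(xb\mu=0/ca|x)$ for suitably chosen $\lambda,a,b,c,\mu$, using the fact that in the $C_1$ chain the two-sided ideals strictly between $z^{n+1}R$ and $z^{n-1}\J(R)$ (with the convention $z^{0}\J(R):=\J(R)$) are precisely $z^{n}R$ and $z^{n}\J(R)$, which are exactly the annihilators of $Y_n,Z_n$ and of $X_n$.

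For the first open set, I would take $\lambda\in z^{n-1}\J(R)\setminus z^{n}R$, which is nonempty because $z^{n-1}\J(R)$ is not principal as a right ideal. Then $N\in(x=x/x\lambda=0)$ iff $\lambda\notin\ann_R N$, iff $\ann_R N\subsetneq\lambda R\subseteq z^{n-1}\J(R)$, and this holds for each of $X_n,Y_n,Z_n$. For the second open set I would take $a,b\in\J(R)$ with $ab=z^n$ and $c,\mu\in\J(R)$ with $cz^n\mu\in z^n\J(R)\setminus z^{n+1}R$, constructed exactly as in the proof of \ref{xknynznopen} using $\J(R)^2=\J(R)$ and $\J(R)z^n=z^n\J(R)$. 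Applying \ref{equonepairs} with $v=b$ gives $Z_n=N(\J(R),Rz^n)=N(b\J(R),Ra)$ and $X_n=N(\J(R),\J(R)z^n)=N(b\J(R),\J(R)a)$; then, in each of these presentations and in $Y_n=N(bR,Ra)$, the realising element is annihilated by $b\mu$ (since $b\mu$ lies in the relevant right ideal) and is not divisible by $ca$ (since $ca$ lies in the relevant left ideal), so each of $X_n,Y_n,Z_n$ opens $(xb\mu=0/ca|x)$.

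The converse is where the chain is genuinely used. Suppose $N\in(x=x/x\lambda=0)\cap(xb\mu=0/ca|x)$. The first condition forces $\ann_R N\subsetneq\lambda R\subseteq z^{n-1}\J(R)$. For the second, since the pp-definable subgroups of the indecomposable pure-injective $N$ are totally ordered, $N\in(xb\mu=0/ca|x)$ forces $(xb\mu=0)(N)\supsetneq(ca|x)(N)$, so $N\cdot ca\subseteq(xb\mu=0)(N)$, yielding $cz^n\mu=cab\mu\in\ann_R N$ and hence $\ann_R N\supseteq cz^n\mu R\supsetneq z^{n+1}R$. The two-sided ideals strictly between $z^{n+1}R$ and $z^{n-1}\J(R)$ in the $C_1$ chain are precisely $z^nR$ and $z^n\J(R)$, so $\ann_R N\in\{z^nR,z^n\J(R)\}$. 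By \ref{listofpoints2}, together with \ref{nonfgannpointsiso} (giving $X_n$ as the unique $T_0$-class with annihilator $z^n\J(R)$) and the fact that the only $T_0$-classes with annihilator $z^nR$ are $Y_n$ and $Z_n$, $N$ is topologically indistinguishable from one of $X_n,Y_n,Z_n$. The main bookkeeping step is the reparametrisation of $X_n$ and $Z_n$ via \ref{equonepairs} to expose the common right ideal $b\J(R)$; once that is in place the rest is a routine imitation of \ref{xknynznopen}.
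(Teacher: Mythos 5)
Your proof is correct and follows exactly the route the paper intends: the paper's own proof is the single sentence ``The proof here is exactly as in \ref{xknynznopen},'' and you have supplied precisely the right translation, identifying $z^{n-1}\J(R)$ (the ideal immediately above $z^nR$ in the $C_1$ chain) as the analogue of $Q^{kn-1}$ and $z^{n+1}R$ (the ideal immediately below $z^n\J(R)$) as the analogue of $Q^{kn+1}$, and then repeating the argument with the same $\lambda$, $a,b,c,\mu$ and the same reparametrisations via \ref{equonepairs}.
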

\begin{proof}
The proof here is exactly as in \ref{xknynznopen}.
\end{proof}

\begin{lemma}\label{specialisationC1}
The closure of $X_n$ is $\{X_n,Z_n\}$ and the closure of $Y_n$ is $\{Y_n,Z_n\}$. Thus $Z_n$ is a closed point.
\end{lemma}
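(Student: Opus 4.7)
My plan is to follow the blueprint of \ref{clXkn} and \ref{clYn} with only cosmetic adjustments, since the $C_1$-data ($\ann_R X_n=z^n\J(R)$, $\ann_R Z_n=z^nR$, the elementary equivalence $X_n\equiv N(z^n\J(R),\J(R))$, and the openness of $\{X_n,Y_n,Z_n\}$ from the previous lemma) are structurally identical to the $C_k$-data used there. For $Z_n\in\cl X_n$ I will show that every basic open neighbourhood of $Z_n$ contains $X_n$. By \cite[2.1]{Reynders} and the total orderedness of the pp-lattice of any indecomposable pure-injective, the basis of neighbourhoods of $Z_n$ reduces to the sets $(xa=0/xc=0+d|x)$ with $a\in z^nR$, $c\notin z^nR$ and $d\in\J(R)$, which split as $(xa=0/xc=0)\cap(xa=0/d|x)$. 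The first factor is opened by $X_n$ using $\Ass X_n=\J(R)$ and $\ann_R X_n=z^n\J(R)$. For the second factor, I will reproduce the computation of \ref{clXkn}: take $m\in X_n$ realising $\langle z^n\J(R),\J(R)\rangle$, choose $\lambda\in\J(R)\setminus(Q\cup Rd)$, write $d=\delta\lambda$ with $\delta\in\J(R)$, and argue via total orderedness of the pp-lattice of $X_n$ that $m\lambda$ opens $(xz^n=0/d|x)$. This is the one genuinely delicate step of the proof.

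For the reverse inclusion $\cl X_n\subseteq\{X_n,Z_n\}$, I will eliminate the remaining candidates one by one: each of $X_m$, $Y_m$, $N(\J(R),\J(R))$ distinct from $X_n$ is ruled out by being isolated in $\Zg_R/T_0$; each $Z_m$ with $m\neq n$ is ruled out by the openness of $\{X_m,Y_m,Z_m\}$ proved just above; and the three points $N(0,0)$, $N(0,\J(R))$, $N(\J(R),0)$ all lie in the basic open $(x=x/xz^n=0)$ which excludes $X_n$ because $z^n\in \ann_R X_n$. The statement $\cl Y_n=\{Y_n,Z_n\}$ then falls out almost for free: by \ref{JznRRznJexists}, $Y_n=(z^nR,\J(R))\bot(\J(R),Rz^n)$, so both its generic and its critical modules coincide with $Z_n$, giving $Z_n\in\cl Y_n$ via \ref{mainclosureclaim}; the reverse inclusion is the same isolation argument combined with $\{X_n,Y_n,Z_n\}$ being open.

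Closedness of $Z_n$ is then essentially bookkeeping via \ref{closure}: any $N\in\cl Z_n$ satisfies $\ann_R N\supseteq z^nR$, so $N$ lies in the finite chain of candidates from \ref{listofpoints2}, and each is eliminated by exactly the isolation opens used above, together with the observation that $N(0,0)$, $N(0,\J(R))$, $N(\J(R),0)$ all lie in $(x=x/xz^n=0)$ while $Z_n$ does not. The only obstacle I anticipate throughout is the opening of $(xz^n=0/d|x)$ by $X_n$ for $d\in\J(R)\setminus z^n\J(R)$; once this one pp-lattice computation is carried out (verbatim as in the $C_k$-case), the rest of the lemma is a routine case-check.
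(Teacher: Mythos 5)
Your approach matches the paper's: the authors simply invoke that the argument of \ref{clXkn} and \ref{clYn} transfers verbatim to the $C_1$ data, and your proposal reproduces exactly those steps. However, there is one concrete arithmetic slip in your elimination of $N(0,0)$, $N(0,\J(R))$, $N(\J(R),0)$ from $\cl X_n$: you claim these lie in $\left(x=x/xz^n=0\right)$ while $X_n$ does not, \emph{because} $z^n\in\ann_R X_n$. But in the $C_1$ case $\ann_R X_n=z^n\J(R)=\J(R)z^n$, and $z^n\notin z^n\J(R)$ (that would force $1\in\J(R)$ since $R$ is a domain and $z^n\neq0$). So $\left(x=x/xz^n=0\right)$ actually \emph{contains} $X_n$, and your witness does not exclude it. The fix is easy and is what \ref{clXkn} does: pick any nonzero $\lambda\in\ann_R X_n=z^n\J(R)$ (for instance $\lambda=z^{n+1}$); then $X_n\notin\left(x=x/x\lambda=0\right)$ while the three torsion-free-annihilator points lie in it. Your parallel remark that $Z_n\notin\left(x=x/xz^n=0\right)$ is correct, since $\ann_R Z_n=z^nR\ni z^n$; it is precisely the distinction between $z^nR$ and $z^n\J(R)$ that your $X_n$ step overlooks.

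One further small point: to get $Z_n\in\cl Y_n$ you cite \ref{mainclosureclaim}, but that result goes the other way (it puts $N$ in the closure of its generic or critical module). The correct justification is the standing fact, established in Section~\ref{soberserial} (and restated in the lemma of Section~\ref{pointsforuniserial} that the closure of $N$ consists of $N$ together with the closures of $N_{\text{gen}}$ and $N_{\text{crit}}$), that $N_{\text{gen}},N_{\text{crit}}$ always lie in $\cl N$. Since $Y_n$'s generic and critical modules both equal $Z_n$ by \ref{JznRRznJexists}, this gives $Z_n\in\cl Y_n$ directly. Otherwise the strategy — Reynders' basis for neighbourhoods of $Z_n$, the total-orderedness reduction, the pp-lattice computation for $\left(xz^n=0/d|x\right)$, and the case-check for closedness of $Z_n$ — is exactly what the paper intends.
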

\begin{proof}
This is exactly as in \ref{clXkn} and \ref{clYn}.
%
%
\end{proof}

The proof of \ref{typeC1} is now exactly as in \ref{typeCk}.

\bibliographystyle{alpha}
\bibliography{serial}

\end{document}